\newcommand{\area}{\mathsf{area}}
\newcommand{\dinv}{\mathsf{dinv}}
\newcommand{\Val}{\mathsf{Val}}
\newcommand{\Rise}{\mathsf{Rise}}
\newcommand{\Peak}{\mathsf{Peak}}
\newcommand{\ZVal}{\mathsf{ZVal}}
\newcommand{\DRise}{\mathsf{DRise}}
\newcommand{\DPeak}{\mathsf{DPeak}}
\newcommand{\D}{\mathsf{D}} 
\newcommand{\SQE}{\mathsf{SQ^{E}}} 
\newcommand{\PLSQE}{\mathsf{PLSQ^{E}}} 
\newcommand{\PLD}{\mathsf{PLD}} 
\newcommand{\gendelta}{\mathsf{PLD}_{\underline x,q,t}(m,n)^{\ast k}} 
\newcommand{\gensquare}{\mathsf{PLSQ^E}_{\underline x,q,t}(m,n)^{\ast k}} 
\newcommand{\qbinom}[2]{\genfrac{[}{]}{0pt}{}{#1}{#2}}
\newdimen\qrr@tikz@sharp@z@
	\edef\pgf@marshal{\noexpand\pgfutil@in@{and}{\pgfgetarrowoptions{sharp >}}}%
	\edef\pgf@tempa{\pgfgetarrowoptions{sharp >}}
	\qrr@tikz@sharp@parse\pgfgetarrowoptions{sharp >}and-\pgfgetarrowoptions{sharp >}\@qrr@tikz@sharp@parse
	\let\qrr@tikz@sharp@max\pgfmathresult
	\pgfmathsetlength\pgf@xa{.5*\pgflinewidth * tan(\qrr@tikz@sharp@max)}%
	\edef\pgf@marshal{\noexpand\pgfutil@in@{and}{\pgfgetarrowoptions{sharp >}}}%
	\edef\pgf@tempa{\pgfgetarrowoptions{sharp >}}
	\qrr@tikz@sharp@parse\pgfgetarrowoptions{sharp >}and-\pgfgetarrowoptions{sharp >}\@qrr@tikz@sharp@parse
	\pgfmathsetlength\pgf@ya{.5*\pgflinewidth * tan(max(\pgf@tempa,\pgf@tempb,0))}%
	\pgfmathsetlength\pgf@xa{-.5*\pgflinewidth * tan(\pgf@tempa)}%
	\pgfmathsetlength\pgf@xb{-.5*\pgflinewidth * tan(\pgf@tempb)}%
	\edef\pgf@marshal{\noexpand\pgfutil@in@{and}{\pgfgetarrowoptions{sharp <}}}%
	\edef\pgf@tempa{\pgfgetarrowoptions{sharp <}}
\qrr@tikz@sharp@parse\pgfgetarrowoptions{sharp <}and-\pgfgetarrowoptions{sharp <}\@qrr@tikz@sharp@parse
	\let\qrr@tikz@sharp@max\pgfmathresult
	\pgfmathsetlength\pgf@xa{.5*\pgflinewidth * tan(\qrr@tikz@sharp@max)}%
	\edef\pgf@marshal{\noexpand\pgfutil@in@{and}{\pgfgetarrowoptions{sharp <}}}%
	\edef\pgf@tempa{\pgfgetarrowoptions{sharp <}}
\qrr@tikz@sharp@parse\pgfgetarrowoptions{sharp <}and-\pgfgetarrowoptions{sharp <}\@qrr@tikz@sharp@parse
	\pgfmathsetlength\pgf@ya{.5*\pgflinewidth * tan(max(\pgf@tempa,\pgf@tempb,0))}%
	\pgfmathsetlength\pgf@xa{-.5*\pgflinewidth * tan(\pgf@tempa)}%
	\pgfmathsetlength\pgf@xb{-.5*\pgflinewidth * tan(\pgf@tempb)}%
\def\qrr@tikz@sharp@parse#1and#2\@qrr@tikz@sharp@parse{\def\pgf@tempa{#1}\def\pgf@tempb{#2}}
\newcommand\multiset[2]%
\let\existstemp\exists \renewcommand*{\exists}{\mathop \existstemp}
\let\foralltemp\forall \renewcommand*{\forall}{\mathop \foralltemp}
\def\quotient#1#2{\raise1ex\hbox{$#1$}\Big/\lower1ex\hbox{$#2$}}
\newcommand{\<}{\langle}
\renewcommand{\>}{\rangle}
\newtheorem{theorem}{Theorem}[section]
\newtheorem{lemma}[theorem]{Lemma}
\newtheorem{proposition}[theorem]{Proposition}
\newtheorem{corollary}[theorem]{Corollary}
\newtheorem{conjecture}[theorem]{Conjecture}
\theoremstyle{definition}
\newtheorem{definition}[theorem]{Definition}
\newtheorem{example}[theorem]{Example}
\theoremstyle{remark}
\newtheorem{remark}[theorem]{Remark}
\title{The Delta square conjecture}
\author{Michele D'Adderio}
\address{Universit\'e Libre de Bruxelles (ULB)\\D\'epartement de Math\'ematique\\ Boulevard du Triomphe, B-1050 Bruxelles\\ Belgium}\email{mdadderi@ulb.ac.be}
\author{Alessandro Iraci}
\address{Universit\'a di Pisa and Universit\'e Libre de Bruxelles (ULB)\\Dipartimento di Matematica\\ Largo Bruno Pontecorvo 5, 56127 Pisa\\ Italia}\email{iraci@student.dm.unipi.it}
\author{Anna Vanden Wyngaerd}
\address{Universit\'e Libre de Bruxelles (ULB)\\D\'epartement de Math\'ematique\\ Boulevard du Triomphe, B-1050 Bruxelles\\ Belgium}\email{anvdwyng@ulb.ac.be}
\begin{document}
	
\begin{abstract}
	We conjecture a formula for the symmetric function $\frac{[n-k]_t}{[n]_t}\Delta_{h_m}\Delta_{e_{n-k}}\omega(p_n)$ in terms of decorated partially labelled square paths. This can be seen as a generalization of the square conjecture of Loehr and Warrington \cite{Loehr-Warrington-square-2007}, recently proved by Sergel \cite{Leven-2016} after the breakthrough of Carlsson and Mellit \cite{Carlsson-Mellit-ShuffleConj-2015}. Moreover, it extends to the square case the combinatorics of the generalized Delta conjecture of Haglund, Remmel and Wilson \cite{Haglund-Remmel-Wilson-2015}, answering one of their questions. We support our conjecture by proving the specialization $m=q=0$, reducing it to the same case of the Delta conjecture, and the Schr\"oder case, i.e. the case $\<\cdot ,e_{n-d}h_d\>$. The latter provides a broad generalization of the $q,t$-square theorem of Can and Loehr \cite{Can-Loehr-2006}. We give also a combinatorial involution, which allows to establish a linear relation among our conjectures (as well as the generalized Delta conjectures) with fixed $m$ and $n$. Finally, in the appendix, we give a new proof of the Delta conjecture at $q=0$.
\end{abstract}
	
\maketitle
\tableofcontents

\section{Introduction}

In \cite{Haglund-Remmel-Wilson-2015}, Haglund, Remmel and Wilson conjectured a combinatorial formula for $\Delta_{e_{n-k-1}}'e_n$ in terms of decorated labelled Dyck paths, which they called \emph{Delta conjecture}, after the so called delta operators $\Delta_f'$ introduced by Bergeron, Garsia, Haiman, and Tesler \cite{Bergeron-Garsia-Haiman-Tesler-Positivity-1999} for any symmetric function $f$. In fact in the same article \cite{Haglund-Remmel-Wilson-2015} the authors conjectured a combinatorial formula for the more general $\Delta_{h_m}\Delta_{e_{n-k-1}}'e_n$ in terms of decorated partially labelled Dyck paths, which we call \emph{generalized Delta conjecture}.

These problems have attracted considerable attention since their formulation: a partial list of works about the Delta conjecture is \cite{Haglund-Remmel-Wilson-2015,DAdderio-VandenWyngaerd-2017,DAdderio-Iraci-polyominoes-2017,Haglund-Rhoades-Shimozono-Advances,Garsia-Haglund-Remmel-Yoo-2017,Remmel-Wilson-2015,Wilson-Equidistribution,Rhoades-2018,Romero-Deltaq1-2017,Zabrocki-4Catalan-2016}. The main result about the generalized Delta conjecture is the proof of the Schr\"{o}der case, i.e. the case $\<\cdot , e_{n-d}h_d\>$, in \cite{DAdderio-Iraci-VandenWyngaerd-GenDeltaSchroeder}.

The special case $k=0$ of the Delta conjecture, that has been known as the \emph{Shuffle conjecture} \cite{HHLRU-2005}, was recently proved by Carlsson and Mellit \cite{Carlsson-Mellit-ShuffleConj-2015}. The latter turns out to be a combinatorial formula for the Frobenius characteristic of the $\mathfrak{S}_n$-module of diagonal harmonics studied by Garsia and Haiman in relation to the famous \emph{$n!$ conjecture}, now $n!$ theorem of Haiman \cite{Haiman-nfactorial-2001}.

\medskip

In \cite{Loehr-Warrington-square-2007} Loehr and Warrington conjectured a combinatorial formula for $\Delta_{e_{n}}\omega(p_n)=\nabla \omega(p_n)$ in terms of labelled square paths (ending east), called \emph{square conjecture}. The special case $\<\cdot ,e_n\>$ of this conjecture, known as \emph{$q,t$-square}, has been proved earlier by Can and Loehr in \cite{Can-Loehr-2006}. Recently the full square conjecture has been proved by Sergel in \cite{Leven-2016} after the breakthrough of Carlsson and Mellit in \cite{Carlsson-Mellit-ShuffleConj-2015}.

\medskip

In the present work we conjecture a combinatorial formula for $\frac{[n-k]_t}{[n]_t}\Delta_{h_m}\Delta_{e_{n-k}}\omega(p_n)$ in terms of \emph{decorated partially labelled square paths} that we call \emph{generalized Delta square conjecture}. In analogy with the Delta conjecture in \cite{Haglund-Remmel-Wilson-2015}, we call simply \emph{Delta square conjecture} the special case $m=0$. Our conjecture extends the square conjecture of Loehr and Warrington \cite{Loehr-Warrington-square-2007} (now a theorem \cite{Leven-2016}), i.e. it reduces to that one for $m=k=0$. Moreover, it extends the generalized Delta conjecture in the sense that on decorated partially labelled Dyck paths gives the same combinatorial statistics. Notice that our conjecture answers a question in \cite{Haglund-Remmel-Wilson-2015}.

\medskip

In the present work we support our conjecture by proving some of its consequences. In particular, we prove the Delta square conjecture (i.e. the case $m=0$) at $q=0$: this turns out to reduce to the specialization $q=0$ of the Delta conjecture, already proved in \cite{Garsia-Haglund-Remmel-Yoo-2017}. In fact, in the Appendix we provide a new proof of this result. Also, we prove the Schr\"{o}der case, i.e. the case $\<\cdot , e_{n-d}h_d\>$, of the generalized Delta square conjecture: this is a broad generalization of the $q,t$-square theorem of Can and Loehr \cite{Can-Loehr-2006}, and it is the analogue of the same result for the generalized Delta conjecture proved in \cite{DAdderio-Iraci-VandenWyngaerd-GenDeltaSchroeder}. Finally, we provide a combinatorial involution among the objects of the generalized Delta (square) conjectures for fixed $m$ and $n$. Together with its symmetric function counterpart and the specialization $q=0$ of the generalized Delta conjecture at $k=0$, this will prove a curious linear relation among such conjectures.

\medskip

The paper is organized as follows. In Section~2 we recall the generalized Delta conjecture of \cite{Haglund-Remmel-Wilson-2015} by giving the definitions and fixing the notation. In Section~3 we state our generalized Delta conjecture, and we make a few basic remarks. In Section~4 we fix the notation on symmetric functions and we prove the identities needed in the rest of the paper. In Section~5 we prove the Delta square conjecture (i.e. the case $m=0$) at $q=0$, by reducing it to the Delta conjecture at $q=0$. We will give a new proof of the latter in the Appendix: this in order to make our treatment more self-contained, but also because the new proof might have some independent interest. In Section~6 we prove the generalized Delta conjecture of \cite{Haglund-Remmel-Wilson-2015} at $k=0$ and $t=0$. In Section~7 we prove the Schr\"{o}der case, i.e. the case $\<\cdot, e_{n-d}h_d\>$ of our generalized Delta square conjecture. This is the analogue of the same result for the generalized Delta conjecture proved in \cite{DAdderio-Iraci-VandenWyngaerd-GenDeltaSchroeder}, and it is a broad generalization of the $q,t$-square theorem proved in \cite{Can-Loehr-2006}. In Section~8 we give a combinatorial involution that will provide a counterpart of two theorems on symmetric functions proved in Section~5. With this we will prove a curious linear relation among the Delta (square) conjectures for fixed $m$ and $n$. Finally in Section~9 we mention some open problems.

\section{The generalized Delta conjecture}

\emph{We refer to Section~\ref{sec:SF} for notations and definitions concerning symmetric functions.}

\medskip

In \cite{Haglund-Remmel-Wilson-2015}, the authors conjectured a combinatorial interpretation for the symmetric function \[ \Delta_{h_m}\Delta'_{e_{n-k-1}}e_n \] in terms of partially labelled decorated Dyck paths, known as the \emph{generalized Delta conjecture} because it reduces to the Delta conjecture when $m=0$. We give the necessary definitions.

\begin{definition}
	A \emph{Dyck path} of size $n$ is a lattice path going from $(0,0)$ to $(n,n)$, using only north and east unit steps and staying weakly above the line $x=y$ (also called the \emph{main diagonal}). The set of Dyck paths of size $n$ will be denoted by $\mathsf{D}(n)$. A \emph{partially labelled Dyck path} is a Dyck path whose vertical steps are labelled with (not necessarily distinct) non-negative integers such that the labels appearing in each column are strictly increasing from bottom to top, and $0$ does not appear in the first column. The set of partially labelled Dyck paths with $m$ zero labels and $n$ nonzero labels is denoted by $\PLD(m,n)$.  
\end{definition}

Partially labelled Dyck paths differ from labelled Dyck paths only in that $0$ is allowed as a label in the former and not in the latter. 

\begin{definition}\label{def: monomial path}
	We define for each $D\in \PLD(m,n)$ a monomial in the variables $x_1,x_2,\dots$: we set \[ x^D \coloneqq \prod_{i=1}^n x_{l_i(D)} \] where $l_i(D)$ is the label of the $i$-th vertical step of $D$ (the first being at the bottom). Notice that $x_0$ does not appear, which explains the word \emph{partially}.
\end{definition}

\begin{definition}
	Let $D$ be a (partially labelled) Dyck path of size $n+m$. We define its \emph{area word} to be the string of integers $a(D) = a_1(D) \cdots a_{n+m}(D)$ where $a_i(D)$ is the number of whole squares in the $i$-th row (counting from the bottom) between the path and the main diagonal.
\end{definition}

\begin{definition} \label{def: rise}
	The \emph{rises} of a Dyck path $D$ are the indices \[ \Rise(D) \coloneqq \{2\leq i \leq n+m\mid a_{i}(D)>a_{i-1}(D)\},\] or the vertical steps that are directly preceded by another vertical step. Taking a subset $\DRise(D)\subseteq \Rise (D)$ and decorating the corresponding vertical steps with a $\ast$, we obtain a \emph{decorated Dyck path}, and we will refer to these vertical steps as \emph{decorated rises}. 
\end{definition}

\begin{definition}
	Given a partially labelled Dyck path, we call \emph{zero valleys} its vertical steps with label $0$ (which are necessarily preceded by an horizontal step, that is why we call them valleys).
\end{definition}

The set of partially labelled decorated Dyck paths with $m$ zero labels, $n$ nonzero labels and $k$ decorated rises is denoted by $\PLD(m,n)^{\ast k}$. See Figure~\ref{fig:pldExample1} for an example. 

\begin{figure*}[!ht]
	\centering
	\begin{tikzpicture}[scale = .8]
	
	\draw[step=1.0, gray!60, thin] (0,0) grid (8,8);
	
	\draw[gray!60, thin] (0,0) -- (8,8);
	
	\draw[blue!60, line width=2pt] (0,0) -- (0,1) -- (0,2) -- (1,2) -- (2,2) -- (2,3) -- (2,4) -- (2,5) -- (3,5) -- (4,5) -- (4,6) -- (4,7) -- (4,8) -- (5,8) -- (6,8) -- (7,8) -- (8,8);
	
	\draw (0.5,0.5) circle (0.4 cm) node {$1$};
	\draw (0.5,1.5) circle (0.4 cm) node {$3$};
	\draw (2.5,2.5) circle (0.4 cm) node {$0$};
	\draw (2.5,3.5) circle (0.4 cm) node {$4$};
	\draw (2.5,4.5) circle (0.4 cm) node {$6$};
	\draw (4.5,5.5) circle (0.4 cm) node {$0$};
	\draw (4.5,6.5) circle (0.4 cm) node {$2$};
	\draw (4.5,7.5) circle (0.4 cm) node {$6$};
	
	\node at (1.5,3.5) {$\ast$};
	\node at (3.5,6.5) {$\ast$};
	
	\end{tikzpicture}
	\caption{Example of an element in $\PLD(2,6)^{\ast 2}$.}
	\label{fig:pldExample1}
\end{figure*}

We define two statistics on this set.

\begin{definition} \label{def: area DP}
	We define the \emph{area} of a (partially labelled) decorated Dyck path $D$ as \[ \area(D) \coloneqq \sum_{i\not \in \DRise(D)} a_i(D). \]
\end{definition}

For a more visual definition, the area is the number of whole squares that lie between the path and the main diagonal, except for the ones in the rows containing a decorated rise. For example, the decorated Dyck path in Figure~\ref{fig:pldExample1} has area $7$. 

Notice that the area does not depend on the labels. 

\begin{definition} \label{def: dinv DP}
	Let $D \in \PLD(m,n)$. For $1 \leq i < j \leq n+m$, we say that the pair $(i,j)$ is an \emph{inversion} if
	\begin{itemize}
		\item either $a_i(D) = a_j(D)$ and $l_i(D) < l_j(D)$ (\emph{primary inversion}),
		\item or $a_i(D) = a_j(D) + 1$ and $l_i(D) > l_j(D)$ (\emph{secondary inversion}),
	\end{itemize}
	where $l_i(D)$ denotes the label of the vertical step in the $i$-th row.
	
	Then we define \[\dinv(D)\coloneqq \# \{ 0\leq i < j \leq n+m \mid (i,j) \; \text{is an inversion} \}.\]
\end{definition}

For example, the decorated Dyck path in Figure~\ref{fig:pldExample1} has $1$ primary inversion (the pair $(2,4)$) and $2$ secondary inversions (the pairs $(2,3)$ and $(5,6)$), so its dinv is $3$. 

Notice that the decorations on the rises do not affect the dinv.

\begin{definition}
	We define a formal series in the variables $\underline x=(x_1,x_2,\dots)$ and coefficients in $\mathbb N [q,t]$ 
	\[
	\gendelta \coloneqq \sum_{D\in \PLD(m,n)^{\ast k}}q^{\dinv(D)} t^{\area(D)}x^D.
	\]
\end{definition}

The following conjecture is stated in \cite{Haglund-Remmel-Wilson-2015}.
\begin{conjecture}[Generalized Delta]
	For $m,n,k\in \mathbb{N}$, $m\geq 0$ and $n>k\geq 0$,
	\[ \Delta_{h_{m}} \Delta'_{e_{n-k-1}} e_{n} = \gendelta . \]
\end{conjecture}
Notice that $\gendelta$ is in fact a symmetric function (cf. Remark~\ref{rmk:symmetry}).

\section{The generalized Delta square conjecture}

\emph{We refer to Section~\ref{sec:SF} for notations and definitions concerning symmetric functions.}

\medskip

\begin{definition}
	A \emph{square path ending east} of size $n$ is a lattice paths going from $(0,0)$ to $(n,n)$ consisting of east or north unit steps, always ending with an east step. The set of such paths is denoted by $\SQE(n)$. We call \emph{base diagonal} of a square path the diagonal $y=x+k$ with the smallest value of $k$ that is touched by the path (so that $k\leq 0$). The \emph{shift} of the square path is the non-negative value $-k$. The \emph{breaking point} of the square path is the lowest point in which the path touches the base diagonal (so for Dyck paths is $(0,0)$).
\end{definition}

For example, the path in Figure~\ref{fig: labelled square} has shift $3$.

\begin{definition}
	A \emph{partially labelled square path ending east} is a square path ending east whose vertical steps are labelled with (not necessarily distinct) non-negative integers such that the labels appearing in each column are strictly increasing bottom to top, there is at least one nonzero label labelling a vertical step starting from the base diagonal, and if the path starts with a vertical step, this first step's label is nonzero. The set of partially labelled square paths ending east with $m$ zero labels and $n$ nonzero labels is denoted by $\PLSQE(m,n)$. 
\end{definition}

\begin{definition}
	Let $P$ be a (partially labelled) square path ending east of size $n+m$. We define its \emph{area word} to be the string of integers $a(P) = a_1(P) \cdots a_{n+m}(P)$ where the $i$-th vertical step of the path starts from the diagonal $y=x+a_i(P)$. For example the path in Figure~\ref{fig: labelled square} has area word $0\, -3\, -3\, -2\, -2\, -1\, 0\, 0$.
\end{definition}

\begin{definition}
	Let $P$ be a partially labelled square path ending east. We define the monomial $x^P$ in the same way as for partially labelled Dyck paths (see Definition~\ref{def: monomial path}). 
\end{definition}

\begin{definition}
	The \emph{rises} of a square path ending east $P$ are defined in the same way as the rises of a Dyck path (see Definition~\ref{def: rise}). Taking a subset $\DRise(P)\subseteq \Rise (P)$ and decorating the corresponding vertical steps with a $\ast$, we obtain a \emph{decorated  square path}, and we will refer to these vertical steps as \emph{decorated rises}.
\end{definition}

\begin{definition}
	Given a partially labelled square path, we call \emph{zero valleys} its vertical steps with label $0$ (which are necessarily preceded by a horizontal step, hence the name valleys).
\end{definition}

The set of partially labelled decorated square pahts ending east with $m$ zero labels, $n$ nonzero labels and $k$ decorated rises is denoted by $\PLSQE(m,n)^{\ast k}$. See Figure~\ref{fig: labelled square} for an example.

\begin{figure}[ht]
	\begin{tikzpicture}[scale = 0.8]
	\draw[step=1.0, gray!60, thin] (0,0) grid (8,8);
	
	\draw[gray!60, thin] (3,0) -- (8,5);
	
	\draw[blue!60, line width=1.6pt] (0,0) -- (0,1) -- (1,1) -- (2,1) -- (3,1) -- (4,1) -- (4,2) -- (5,2) -- (5,3) -- (5,4) -- (6,4) -- (6,5) -- (6,6) -- (6,7) -- (7,7) -- (7,8) -- (8,8);
	
	\node at (5.5,5.5) {$\ast$};
	
	\node at (0.5,0.5) {$2$};
	\draw (0.5,0.5) circle (.4cm); 
	\node at (4.5,1.5) {$0$};
	\draw (4.5,1.5) circle (.4cm); 
	\node at (5.5,2.5) {$2$};
	\draw (5.5,2.5) circle (.4cm); 
	\node at (5.5,3.5) {$4$};
	\draw (5.5,3.5) circle (.4cm); 
	\node at (6.5,4.5) {$0$};
	\draw (6.5,4.5) circle (.4cm); 
	\node at (6.5,5.5) {$1$};
	\draw (6.5,5.5) circle (.4cm); 
	\node at (6.5,6.5) {$3$};
	\draw (6.5,6.5) circle (.4cm); 
	\node at (7.5,7.5) {$1$};
	\draw (7.5,7.5) circle (.4cm); 
	\filldraw (4,1) circle (2pt);
	\end{tikzpicture}\caption{Example of an element in $\PLSQE(2,6)^{\ast 1 }$}\label{fig: labelled square}
\end{figure}

\begin{remark}
	Observe that a partially labelled Dyck path is also a partially labelled square path, and indeed $\PLD(m,n)^{\ast k}\subset \PLSQE(m,n)^{\ast k}$.
\end{remark}

We define two statistics on this set that reduce to the same statistics as defined in \cite{Loehr-Warrington-square-2007} when $m=k=0$. 

\begin{definition}
	\label{def:sqarea}
	Let $P \in \PLSQE(m,n)^{\ast k}$ and $s$ be its shift. Define 
	\[
	\area(P) \coloneqq \sum_{i\not \in \DRise(P)} (a_i(P) + s).
	\] More visually, the area is the number of whole squares between the path and the base diagonal and not contained in rows containing a decorated rise. 
\end{definition}

For example, the path in Figure~\ref{fig: labelled square} has area $11$. 

\begin{definition} \label{def: dinv SQ}
	Let $P \in \PLSQE(m,n)$. For $1 \leq i < j \leq n+m$, we say that the pair $(i,j)$ is an \emph{inversion} if
	\begin{itemize}
		\item either $a_i(P) = a_j(P)$ and $l_i(P) < l_j(P)$ (\emph{primary inversion}),
		\item or $a_i(P) = a_j(P) + 1$ and $l_i(P) > l_j(P)$ (\emph{secondary inversion}),
	\end{itemize}
	where $l_i(P)$ denotes the label of the vertical step in the $i$-th row.
	
	Then we define 
	\begin{align*}
		\dinv(P) & \coloneqq \# \{ 0\leq i < j \leq n+m \mid (i,j) \; \text{is an inversion}\} \\
		& \quad + \#\{0\leq i\leq m+n \mid a_i(P)<0\text{ and } l_i(P)\neq 0 \}.
	\end{align*} 
	This second term is referred to as \emph{bonus dinv}. 
\end{definition}

For example, the path in Figure~\ref{fig: labelled square} has dinv $6$: $2$ primary inversions, i.e. $(1,7)$ and $(2,3)$, $1$ secondary inversion, i.e. $(1,6)$, and $3$ bonus dinv, coming from the rows $3$, $4$ and $6$. 

\begin{remark}
	Observe on partially labelled Dyck paths all our statistics agree with the statistics of the generalized Delta conjecture.
\end{remark}

\begin{definition}
	We define a formal series in the variables $\underline x=(x_1,x_2,\dots)$ and coefficients in $\mathbb N [q,t]$ 
	\[
	\gensquare \coloneqq \sum_{P\in \PLSQE(m,n)^{\ast k}} q^{\dinv(P)}t^{\area(P)}x^P. 
	\]
\end{definition}

In analogy with the Delta conjecture, we will refer to the case $m=0$ of the following conjecture simply as the \emph{Delta square conjecture}.

\begin{conjecture}[Generalized Delta square]
	For $m,n,k\in \mathbb{N}$, $m\geq 0$ and $n>k\geq 0$,
	\[
	\frac{[n-k]_t}{[n]_t} \Delta_{h_m}\Delta_{e_{n-k}}\omega(p_n) = \gensquare.
	\]	
\end{conjecture}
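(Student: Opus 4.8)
The plan is to reduce the generalized Delta square conjecture to the generalized Delta conjecture, by combining two ingredients: a symmetric-function identity that expresses $\frac{[n-k]_t}{[n]_t}\Delta_{h_m}\Delta_{e_{n-k}}\omega(p_n)$ as a sum, over the possible shifts $s\ge 0$, of generalized Delta symmetric functions $\gendelta$ taken at a $t$-degree shifted by a linear function of $s$; and a combinatorial decomposition of $\PLSQE(m,n)^{\ast k}$ according to the shift and the breaking point that matches this sum term by term. This is the exact analogue, in the decorated partially labelled setting, of Sergel's deduction of the square conjecture from the shuffle theorem, and of the Schröder-case argument carried out in Section~7.

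On the symmetric-function side, I would start from the plethystic definitions of the $\Delta$ and $\Delta'$ operators, expand $\omega(p_n)$ in the modified Macdonald basis, and use the standard eigenoperator manipulations (the Bergeron–Garsia–Haiman–Tesler relations, Macdonald reciprocity, and the identities of Section~4) to rewrite $\Delta_{h_m}\Delta_{e_{n-k}}\omega(p_n)$ so that the normalisation $\frac{[n-k]_t}{[n]_t}$ becomes the shadow of a geometric series in $t$ indexed by $s$. The target is a clean identity whose $s=0$ term is precisely $\Delta_{h_m}\Delta'_{e_{n-k-1}}e_n=\gendelta$, and whose $s$-th term is the generalized Delta symmetric function with $s$ extra ``bonus'' rows tracked by a factor $t^{s\cdot(\cdot)}$. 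All the identities needed here are effective in principle.

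On the combinatorial side, I would partition $P\in\PLSQE(m,n)^{\ast k}$ by its shift $s$: cutting $P$ at its breaking point and re-gluing the two arcs produces a (decorated, partially labelled) Dyck-type path, under which the $s$ rows with negative area-word entry — the ones contributing bonus dinv — become ordinary dinv-contributing rows, the shift $s$ is absorbed into the $+s$ offset that $\area(P)$ puts on every non-decorated row, and the boundary conditions defining $\PLSQE$ (ending east, at least one nonzero label starting from the base diagonal, nonzero first label if the path starts vertically) turn into the defining conditions of $\PLD$ (the ``$0$ not in the first column'' constraint). Tracking $\dinv$ and $\area$ through this sweep should reproduce exactly the weighted sum coming from the symmetric-function identity.

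The main obstacle is twofold. First, the generalized Delta conjecture is itself open, so this argument establishes the generalized Delta square conjecture only \emph{conditionally}; an unconditional proof would have to push the Carlsson–Mellit Dyck-path-algebra machinery through the extra $\Delta_{h_m}$ and the non-$e_n$ argument, which is precisely where the general Delta conjecture is currently stuck. Second, and more delicately even for the conditional statement, one must check that the $t$-powers produced by the shift statistic and the $\frac{[n-k]_t}{[n]_t}$ factor align \emph{exactly} with the symmetric-function side rather than being off by a boundary term; the safest way to pin this down is to first verify the two specializations the paper isolates — the case $q=0$ (where the dinv collapses and the statement reduces, via Section~5, to the Delta conjecture at $q=0$, which is known) and the Schröder pairing $\langle\,\cdot\,,e_{n-d}h_d\rangle$ (where, as in Section~7, everything linearises and the $q,t$-square theorem of Can and Loehr can be bootstrapped) — and only then assemble the general identity, using the involution of Section~8 as an additional consistency check.
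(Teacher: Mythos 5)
The statement you are addressing is the central \emph{conjecture} of the paper: the authors do not prove it, and it remains open. What the paper establishes are consequences and special cases — the $m=0$, $q=0$ specialization (Section~5, reduced to the Delta conjecture at $q=0$), the Schr\"oder case $\langle\,\cdot\,,e_{n-d}h_d\rangle$ (Section~7, via the recursions for $F_{n,k;p}^{(d,\ell)}$ and $S_{n,k;p}^{(d,\ell)}$), and a linear relation among the conjectures for fixed $m$ and $n$ (Section~8). Your proposal, by your own admission, is at best a conditional reduction to the generalized Delta conjecture, which is itself open; so even if every step worked it would not prove the statement.

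More importantly, the first ingredient of your plan — a symmetric-function identity writing $\frac{[n-k]_t}{[n]_t}\Delta_{h_m}\Delta_{e_{n-k}}\omega(p_n)$ as a sum over shifts $s$ of generalized Delta symmetric functions with $t$-degree shifted linearly in $s$ — is not known and is structurally the wrong shape. The only proven instance of such a reduction ($m=k=0$, Sergel's theorem) does not decompose by shift with $t$-weights: it decomposes by the number of cars on the base diagonal, using Can--Loehr's identity $\omega(p_n)=\sum_{k}\frac{[n]_q}{[k]_q}E_{n,k}$ together with the \emph{compositional} shuffle theorem of Carlsson--Mellit, so the weights are the $q$-factors $[n]_q/[k]_q$ attached to the $E_{n,k}$ components (this is exactly the role of $S_{n,k;p}^{(d,\ell)}=\frac{[n]_q}{[k]_q}F_{n,k;p}^{(d,\ell)}$ in Section~7, where the $q^k$ in the recursion accounts for the bonus dinv of the $k$ non-zero-valley minima). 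To run that argument in full generality one would need a touching-point (or compositional) refinement of the generalized Delta conjecture, i.e.\ a combinatorial interpretation of $\Delta_{h_m}$ applied to $\nabla E_{n-k,j}$-type pieces, which does not exist; the paper circumvents this only after pairing with $e_{n-d}h_d$, where everything collapses to a scalar recursion. Your combinatorial cut-and-reglue at the breaking point has the matching problem: the bonus dinv and the primary/secondary inversions across the cut do not simply become ordinary inversions of a Dyck path, and the paper never claims such a bijection. The parts of your outline that are sound — verifying the $q=0$ case and the Schr\"oder case — are precisely what the paper does, but by entirely different means (explicit Macdonald specializations in Section~5, and the two-parameter recursion of Theorems~\ref{thm:Snkpdl_recursion} and~\ref{thm:square_recursion} in Section~7), and they remain evidence for the conjecture rather than steps of a proof.
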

\begin{remark} \label{rmk:symmetry}
	Observe that $\gensquare$ is a symmetric function. Indeed, consider the expression $\sum_Pq^{\dinv(P)}t^{\area(P)}x^P$ where the sum is taken over all $P\in \PLSQE(m,n)^{\ast k}$ of a fixed \emph{shape}, i.e. a fixed underlying square path with prescribed zero valleys. From this sum we can factor $t^{\area(P)}$, as the area is the same for all such paths $P$, and $q^{a(P)}$, where $a(P)$ is the contribution to the dinv of the $0$ labels and of the negative letters of the area word (the bonus dinv): indeed this contribution does not depend on the nonzero labels, but only on the shape, so it will be the same for all our paths. What we are left with is in fact an LLT polynomial: the argument is essentially the same as in \cite{Haglund-Remmel-Wilson-2015}*{Section~6.2}, so we omit it (cf also \cite{Haglund-Book-2008}*{Remark~6.5}). As it is well-known that the LLT polynomials are symmetric functions (cf. \cite{Haglund-Haiman-Loehr}*{Appendix}), we deduce that also $\gensquare$ is symmetric.
\end{remark}

This conjecture answers a question in \cite{Haglund-Remmel-Wilson-2015}*{Section~8.2}.

\begin{remark}
	Notice that the case $m=k=0$ of the generalized Delta square conjecture reduces precisely to the \emph{square conjecture} of Loehr and Warrington \cite{Loehr-Warrington-square-2007}, recently proved by Sergel \cite{Leven-2016} after the breakthrough of Carlsson and Mellit \cite{Carlsson-Mellit-ShuffleConj-2015}.
\end{remark}
\begin{example}
	Using \eqref{eq:en_expansion} and \eqref{eq:p_expansion} (see Section~\ref{sec:SF} for definitions and notations about symmetric functions), it is easy to see that for $m=0$ and $k=n-1$ we get
	\begin{equation}
	\frac{[1]_t}{[n]_t} \Delta_{h_0}\Delta_{e_{1}}\omega(p_n) = [n]_qe_n.
	\end{equation}
	We leave to the reader the straightforward verification that indeed
	\begin{equation}
	\mathsf{PLSQ^E}_{\underline x,q,t}(0,n)^{\ast n-1} = [n]_qe_n,
	\end{equation}
	proving in this way our conjecture at $m=0$ and $k=n-1$.
\end{example}

\section{Symmetric functions} \label{sec:SF}

For all the undefined notations and the unproven identities, we refer to \cite{DAdderio-VandenWyngaerd-2017}*{Section~1}, where definitions, proofs and/or references can be found. In the next subsection we will limit ourselves to introduce some notation, while in the following one we will recall some identities that are going to be useful in the sequel. In the third and final subsection we will prove the main results on symmetric functions of this work.

For more references on symmetric functions cf. also \cite{Macdonald-Book-1995}, \cite{Stanley-Book-1999} and \cite{Haglund-Book-2008}.

\subsection{Notation}

We denote by $\Lambda=\bigoplus_{n\geq 0}\Lambda^{(n)}$ the graded algebras of symmetric functions with coefficients in $\mathbb{Q}(q,t)$, and by $\<\, , \>$ the \emph{Hall scalar product} on $\Lambda$, which can be defined by saying that the Schur functions form an orthonormal basis.

The standard bases of the symmetric functions that will appear in our
calculations are the monomial $\{m_\lambda\}_{\lambda}$, complete $\{h_{\lambda}\}_{\lambda}$, elementary $\{e_{\lambda}\}_{\lambda}$, power $\{p_{\lambda}\}_{\lambda}$ and Schur $\{s_{\lambda}\}_{\lambda}$ bases.

\emph{We will use implicitly the usual convention that $e_0 = h_0 = 1$ and $e_k = h_k = 0$ for $k < 0$.}

For a partition $\mu\vdash n$, we denote by
\begin{align}
	\widetilde{H}_{\mu} \coloneqq \widetilde{H}_{\mu}[X]=\widetilde{H}_{\mu}[X;q,t]=\sum_{\lambda\vdash n}\widetilde{K}_{\lambda \mu}(q,t)s_{\lambda}
\end{align}
the \emph{(modified) Macdonald polynomials}, where
\begin{align}
	\widetilde{K}_{\lambda \mu} \coloneqq \widetilde{K}_{\lambda \mu}(q,t)=K_{\lambda \mu}(q,1/t)t^{n(\mu)}\quad \text{ with }\quad n(\mu)=\sum_{i\geq 1}\mu_i(i-1)
\end{align}
are the \emph{(modified) Kostka coefficients} (see \cite{Haglund-Book-2008}*{Chapter~2} for more details). 

The set $\{\widetilde{H}_{\mu}[X;q,t]\}_{\mu}$ is a basis of the ring of symmetric functions $\Lambda$. This is a modification of the basis introduced by Macdonald \cite{Macdonald-Book-1995}.

If we identify the partition $\mu$ with its Ferrers diagram, i.e. with the collection of cells $\{(i,j)\mid 1\leq i\leq \mu_i, 1\leq j\leq \ell(\mu)\}$, then for each cell $c\in \mu$ we refer to the \emph{arm}, \emph{leg}, \emph{co-arm} and \emph{co-leg} (denoted respectively as $a_\mu(c), l_\mu(c), a_\mu(c)', l_\mu(c)'$) as the number of cells in $\mu$ that are strictly to the right, above, to the left and below $c$ in $\mu$, respectively.

We set $M \coloneqq (1-q)(1-t)$ and we define for every partition $\mu$
\begin{align}
	B_{\mu} & \coloneqq B_{\mu}(q,t)=\sum_{c\in \mu}q^{a_{\mu}'(c)}t^{l_{\mu}'(c)} \\
	T_{\mu} & \coloneqq T_{\mu}(q,t)=\prod_{c\in \mu}q^{a_{\mu}'(c)}t^{l_{\mu}'(c)} \\
	\Pi_{\mu} & \coloneqq \Pi_{\mu}(q,t)=\prod_{c\in \mu/(1)}(1-q^{a_{\mu}'(c)}t^{l_{\mu}'(c)}) \\
	w_{\mu} & \coloneqq w_{\mu}(q,t)=\prod_{c\in \mu} (q^{a_{\mu}(c)} - t^{l_{\mu}(c) + 1}) (t^{l_{\mu}(c)} - q^{a_{\mu}(c) + 1}).
\end{align}

We will make extensive use of the \emph{plethystic notation} (cf. \cite{Haglund-Book-2008}*{Chapter~1}).

We have for example the addition formulas
\begin{align}
	\label{eq:e_h_sum_alphabets}
	e_n[X+Y]=\sum_{i=0}^ne_{n-i}[X]e_i[Y]\quad \text{ and } \quad  h_n[X+Y]=\sum_{i=0}^nh_{n-i}[X]h_i[Y].
\end{align}
We will also use the symbol $\epsilon$ for
\begin{equation}
f[\epsilon X] = (-1)^k f[X]\qquad \text{ for } f[X]\in \Lambda^{(k)},
\end{equation}
so that, in general,
\begin{align}
	\label{eq:minusepsilon}
	f[-\epsilon X] = \omega f[X]
\end{align}
for any symmetric function $f$, where $\omega$ is the fundamental algebraic involution which sends $e_k$ to $h_k$, $s_{\lambda}$ to $s_{\lambda'}$ and $p_k$ to $(-1)^{k-1}p_k$.

Recall the \emph{Cauchy identities}
\begin{align}
	\label{eq:Cauchy_identities}
	h_n[XY] = \sum_{\lambda\vdash n} s_{\lambda}[X] s_{\lambda}[Y] \quad \text{ and } \quad h_n[XY] = \sum_{\lambda\vdash n} h_{\lambda}[X] m_{\lambda}[Y].
\end{align}

We will also use the \emph{star scalar product} on $\Lambda$, which can be defined for all $f,g\in \Lambda$ as
\begin{equation} \label{eq:starprod}
\langle f,g\rangle_* \coloneqq  \langle \omega \phi f,g\rangle=\langle \phi \omega f,g\rangle,
\end{equation}
where 
\begin{equation}
\phi f[X] \coloneqq f[MX]\qquad \text{ for all } f[X]\in \Lambda.
\end{equation}

It turns out that the Macdonald polynomials are orthogonal with respect to the star scalar product: more precisely
\begin{align}
	\label{eq:H_orthogonality}
	\< \widetilde{H}_{\lambda},\widetilde{H}_{\mu}\>_*=w_{\mu}(q,t)\delta_{\lambda, \mu}.
\end{align}

We define the \emph{nabla} operator on $\Lambda$ by
\begin{align}
	\nabla \widetilde{H}_{\mu} \coloneqq T_{\mu} \widetilde{H}_{\mu} \quad \text{ for all } \mu,
\end{align}
and we define the \emph{delta} operators $\Delta_f$ and $\Delta_f'$ on $\Lambda$ by
\begin{align}
	\Delta_f \widetilde{H}_{\mu} \coloneqq f[B_{\mu}(q,t)] \widetilde{H}_{\mu} \quad \text{ and } \quad 
	\Delta_f' \widetilde{H}_{\mu}  \coloneqq f[B_{\mu}(q,t)-1] \widetilde{H}_{\mu}, \quad \text{ for all } \mu.
\end{align}
Observe that on the vector space of symmetric functions homogeneous of degree $n$, denoted by $\Lambda^{(n)}$, the operator $\nabla$ equals $\Delta_{e_n}$. Moreover, for every $1\leq k\leq n$,
\begin{align}
	\label{eq:deltaprime}
	\Delta_{e_k} = \Delta_{e_k}' + \Delta_{e_{k-1}}' \quad \text{ on } \Lambda^{(n)},
\end{align}
and for any $k > n$, $\Delta_{e_k} = \Delta_{e_{k-1}}' = 0$ on $\Lambda^{(n)}$, so that $\Delta_{e_n}=\Delta_{e_{n-1}}'$ on $\Lambda^{(n)}$.

\medskip

For a given $k\geq 1$, we define the \emph{Pieri coefficients} $c_{\mu \nu}^{(k)}$ and $d_{\mu \nu}^{(k)}$ by setting
\begin{align}
	\label{eq:def_cmunu} h_{k}^\perp \widetilde{H}_{\mu}[X] & =\sum_{\nu \subset_k \mu} c_{\mu \nu}^{(k)} \widetilde{H}_{\nu}[X], \\
	\label{eq:def_dmunu} e_{k}\left[\frac{X}{M}\right] \widetilde{H}_{\nu}[X] & = \sum_{\mu \supset_k \nu} d_{\mu \nu}^{(k)} \widetilde{H}_{\mu}[X],
\end{align}
where $\nu\subset_k \mu$ means that $\nu$ is contained in $\mu$ (as Ferrers diagrams) and $\mu/\nu$ has $k$ lattice cells, and the symbol $\mu \supset_k \nu$ is analogously defined. The following identity is well-known:
\begin{align}
	\label{eq:rel_cmunu_dmunu} 
	c_{\mu \nu}^{(k)} = \frac{w_{\mu}}{w_{\nu}}d_{\mu \nu}^{(k)}.
\end{align}

The following summation formula is also well-known (e.g. cf. \cite{DAdderio-VandenWyngaerd-2017}*{Equation~1.35}):
\begin{equation} \label{eq:sumBmu}
\sum_{\nu\subset_1\mu}c_{\mu \nu}^{(1)} = B_\mu,
\end{equation}
while the following one is proved right after Equation~(5.4) in \cite{DAdderio-VandenWyngaerd-2017}: for $\alpha\vdash n$,
\begin{equation} \label{eq:summation}
\sum_{\nu\subset_\ell \alpha}c_{\alpha \nu}^{(\ell)}  T_\nu = e_{n-\ell}[B_\alpha].
\end{equation}

\medskip

We will also use the symmetric functions $E_{n,k}$, that were introduced in \cite{Garsia-Haglund-qtCatalan-2002} by means of the following expansion:
\begin{align}
	\label{eq:def_Enk}
	e_n \left[ X \frac{1-z}{1-q} \right] = \sum_{k=1}^n \frac{(z;q)_k}{(q;q)_k} E_{n,k},
\end{align}
where
\begin{align}
	(a;q)_s \coloneqq (1 - a)(1 - qa)(1 - q^2 a) \cdots (1 - q^{s-1} a)
\end{align}
is the usual $q$-\emph{rising factorial}.

Observe that 
\begin{equation} \label{eq:Enk}
e_n=\sum_{k=1}^nE_{n,k}.
\end{equation}

Recall also the standard notation for $q$-analogues: for $n, k\in \mathbb{N}$, we set
\begin{align}
	[0]_q \coloneqq 0, \quad \text{ and } \quad [n]_q \coloneqq \frac{1-q^n}{1-q} = 1+q+q^2+\cdots+q^{n-1} \quad \text{ for } n \geq 1,
\end{align}
\begin{align}
	[0]_q! \coloneqq 1 \quad \text{ and }\quad [n]_q! \coloneqq [n]_q[n-1]_q \cdots [2]_q [1]_q \quad \text{ for } n \geq 1,
\end{align}
and
\begin{align}
	\qbinom{n}{k}_q  \coloneqq \frac{[n]_q!}{[k]_q![n-k]_q!} \quad \text{ for } n \geq k \geq 0, \quad \text{ while } \quad \qbinom{n}{k}_q \coloneqq 0 \quad \text{ for } n < k.
\end{align}

Recall also (cf. \cite{Stanley-Book-1999}*{Theorem~7.21.2}) that
\begin{align}
	\label{eq:h_q_binomial}
	h_k[[n]_q] = \frac{(q^{n};q)_k}{(q;q)_k} = \qbinom{n+k-1}{k}_q \quad \text{ for } n \geq 1 \text{ and } k \geq 0,
\end{align}
and
\begin{align} \label{eq:e_q_binomial}
	e_k[[n]_q] = q^{\binom{k}{2}} \qbinom{n}{k}_q \quad \text{ for all } n, k \geq 0.
\end{align}

Moreover (cf. \cite{Stanley-Book-1999}{Corollary~7.21.3})
\begin{align}
	\label{eq:h_q_prspec}
	h_k\left[\frac{1}{1-q}\right] = \frac{1}{(q;q)_k} = \prod_{i=1}^k \frac{1}{1-q^i} \quad \text{ for } k \geq 0.
\end{align}

\subsection{Some basic identities}

First of all, we record the well-known
\begin{equation} \label{eq:Hn}
\widetilde{H}_{(n)}[X] = h_n\left[\frac{X}{1-q}\right] \prod_{i=1}^{n}(1-q^i),
\end{equation}
and the obvious
\begin{align} \label{eq:obvious}
	\notag	T_{(n)} & = q^{\binom{n}{2}}\\
	\notag B_{(n)} & = [n]_q\\
	\Pi_{(n)} & =\prod_{i=1}^{n}(1-q^i)\\
	\notag w_{(n)} & =\prod_{i=1}^{n}(1-q^i) \cdot \prod_{i=0}^{n-1}(q^i-t).
\end{align}

The following identity is well-known: for any symmetric function $f\in \Lambda^{(n)}$,
\begin{align}
	\label{eq:lem_e_h_Delta}
	\< \Delta_{e_{d}} f, h_n \> = \< f, e_d h_{n-d} \>.
\end{align}

We will use the following form of \emph{Macdonald-Koornwinder reciprocity}: for all partitions $\alpha$ and $\beta$
\begin{align}
	\label{eq:Macdonald_reciprocity}
	\frac{\widetilde{H}_{\alpha}[MB_{\beta}]}{\Pi_{\alpha}} = \frac{\widetilde{H}_{\beta}[MB_{\alpha}]}{\Pi_{\beta}}.
\end{align}
The following identity is also known as \emph{Cauchy identity}:
\begin{align}
	\label{eq:Mac_Cauchy}
	e_n \left[ \frac{XY}{M} \right] = \sum_{\mu \vdash n} \frac{ \widetilde{H}_{\mu} [X] \widetilde{H}_\mu [Y]}{w_\mu} \quad \text{ for all } n.
\end{align}

We need the following well-known proposition.
\begin{proposition} 
	For $n\in \mathbb{N}$ we have
	\begin{align}
		\label{eq:en_expansion}
		e_n[X] = e_n \left[ \frac{XM}{M} \right] = \sum_{\mu \vdash n} \frac{M B_\mu \Pi_{\mu} \widetilde{H}_\mu[X]}{w_\mu}.
	\end{align}
	Moreover, for all $k\in \mathbb{N}$ with $0\leq k\leq n$, we have
	\begin{align}
		\label{eq:e_h_expansion}
		h_k \left[ \frac{X}{M} \right] e_{n-k} \left[ \frac{X}{M} \right] = \sum_{\mu \vdash n} \frac{e_k[B_\mu] \widetilde{H}_\mu[X]}{w_\mu},
	\end{align}
	and
	\begin{align}
		\label{eq:p_expansion}
		\omega (p_n[X]) = [n]_q[n]_t\sum_{\mu \vdash n} \frac{M\Pi_\mu\widetilde{H}_\mu[X]}{w_\mu}.
	\end{align}
\end{proposition}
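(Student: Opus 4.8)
The plan is to deduce all three expansions from one fact: by the orthogonality \eqref{eq:H_orthogonality}, the families $\{\widetilde{H}_\mu\}_{\mu\vdash n}$ and $\{\widetilde{H}_\mu/w_\mu\}_{\mu\vdash n}$ are dual bases of $\Lambda^{(n)}$ for the star scalar product, so every $f\in\Lambda^{(n)}$ satisfies $f=\sum_{\mu\vdash n}\langle f,\widetilde{H}_\mu\rangle_*\,\widetilde{H}_\mu/w_\mu$. Thus I only need to compute three star scalar products. The structural input I will use each time is $\widetilde{H}_\mu[M]=MB_\mu\Pi_\mu$: this comes from Macdonald--Koornwinder reciprocity \eqref{eq:Macdonald_reciprocity} with $\alpha=\mu$ and $\beta=(1)$, since $B_{(1)}=1$, $\Pi_{(1)}=1$ and $\widetilde{H}_{(1)}[Z]=Z$ turn it into $\widetilde{H}_\mu[M]/\Pi_\mu=\widetilde{H}_{(1)}[MB_\mu]=MB_\mu$.

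For \eqref{eq:en_expansion} I would unwind \eqref{eq:starprod}: $\langle e_n,\widetilde{H}_\mu\rangle_*=\langle\phi\omega e_n,\widetilde{H}_\mu\rangle=\langle h_n[MX],\widetilde{H}_\mu[X]\rangle$, and by the reproducing property $h_n[XY]=\sum_{\lambda\vdash n}s_\lambda[X]s_\lambda[Y]$ of \eqref{eq:Cauchy_identities} this is $\widetilde{H}_\mu[M]=MB_\mu\Pi_\mu$; duality then gives the formula. (Equivalently, one may specialize $Y=M$ in \eqref{eq:Mac_Cauchy}.) For \eqref{eq:e_h_expansion}, since $\phi$ rescales the alphabet by $M$ and $\omega$ swaps $e$ and $h$, applying $\phi$ then $\omega$ to $h_k[X/M]e_{n-k}[X/M]$ produces $e_k[X]h_{n-k}[X]$, so $\langle h_k[X/M]e_{n-k}[X/M],\widetilde{H}_\mu\rangle_*=\langle e_kh_{n-k},\widetilde{H}_\mu\rangle$; now \eqref{eq:lem_e_h_Delta} rewrites this as $\langle\Delta_{e_k}\widetilde{H}_\mu,h_n\rangle=e_k[B_\mu]\langle\widetilde{H}_\mu,h_n\rangle=e_k[B_\mu]$, using $\langle\widetilde{H}_\mu,h_n\rangle=1$ (the coefficient of $s_{(n)}$ in $\widetilde{H}_\mu$).

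For \eqref{eq:p_expansion}, \eqref{eq:starprod} gives $\langle\omega p_n,\widetilde{H}_\mu\rangle_*=\langle\phi\omega(\omega p_n),\widetilde{H}_\mu\rangle=\langle p_n[MX],\widetilde{H}_\mu[X]\rangle$; since power sums are multiplicative over products of alphabets, $p_n[MX]=p_n[M]\,p_n[X]$ with $p_n[M]=p_n[1-q]\,p_n[1-t]=(1-q^n)(1-t^n)=M\,[n]_q[n]_t$, so the scalar product is $M\,[n]_q[n]_t\,\langle p_n,\widetilde{H}_\mu\rangle$ and duality finishes once I know $\langle p_n,\widetilde{H}_\mu\rangle=\Pi_\mu$. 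This last identity is the one place that is not a one-line unwinding; I would obtain it from the classical evaluation $\widetilde{H}_\mu[1-u]=\prod_{c\in\mu}(1-u\,q^{a_\mu'(c)}t^{l_\mu'(c)})$ by expanding the left-hand side in Schur functions (only the hooks $s_{(n-i,1^i)}$ survive, with $s_{(n-i,1^i)}[1-u]=(1-u)(-u)^i$), cancelling the common factor $1-u$ to get $\sum_{i=0}^{n-1}\widetilde{K}_{(n-i,1^i),\mu}(-u)^i=\prod_{c\in\mu/(1)}(1-u\,q^{a_\mu'(c)}t^{l_\mu'(c)})$, and then setting $u=1$; since $\langle p_n,s_{(n-i,1^i)}\rangle=(-1)^i$, this is exactly $\langle p_n,\widetilde{H}_\mu\rangle=\Pi_\mu$. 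The only real obstacle is that evaluation of $\widetilde{H}_\mu$ at $1-u$, which is standard (e.g. a one-variable specialization of the Haglund--Haiman--Loehr combinatorial formula for $\widetilde{H}_\mu$, or a consequence of Macdonald's original results); everything else is routine bookkeeping with $\phi$, $\omega$, and the self-duality of the $\widetilde{H}$-basis.
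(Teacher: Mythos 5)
Your proof is correct. Note that the paper itself offers no proof of this proposition: it is stated as ``well-known'' with the definitions and references deferred to \cite{DAdderio-VandenWyngaerd-2017}*{Section~1}, so there is no in-paper argument to compare against. Your route is the standard one: star-scalar-product duality of $\{\widetilde{H}_\mu\}$ and $\{\widetilde{H}_\mu/w_\mu\}$ from \eqref{eq:H_orthogonality}, the evaluation $\widetilde{H}_\mu[M]=MB_\mu\Pi_\mu$ via reciprocity \eqref{eq:Macdonald_reciprocity} at $\beta=(1)$ (equivalently, \eqref{eq:en_expansion} is just $Y=M$ in \eqref{eq:Mac_Cauchy}), the computation $\langle e_kh_{n-k},\widetilde{H}_\mu\rangle=e_k[B_\mu]$ via \eqref{eq:lem_e_h_Delta}, and $p_n[M]=M[n]_q[n]_t$ together with $\langle p_n,\widetilde{H}_\mu\rangle=\Pi_\mu$. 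All steps check out; the only ingredients you use that are not recorded anywhere in the paper are the normalizations $\widetilde{K}_{(n),\mu}=1$ and the hook expansion of $\widetilde{H}_\mu[1-u]=\prod_{c\in\mu}(1-uq^{a_\mu'(c)}t^{l_\mu'(c)})$, both of which are standard facts about modified Macdonald polynomials, and your derivation of $\langle p_n,\widetilde{H}_\mu\rangle=\Pi_\mu$ from the latter (hooks, Murnaghan--Nakayama, cancel $1-u$, set $u=1$) is sound.
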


We will make use of the following easy proposition.
\begin{proposition}
	We have
	\begin{equation} \label{eq:deltaq0}
	{\nabla e_n}_{\big{|}_{t=0}} =\widetilde{H}_{(n)}[X;q,0] = \widetilde{H}_{(n)}[X;q,t] =h_n\left[\frac{X}{1-q}\right] \prod_{i=1}^{n}(1-q^i).
	\end{equation}	
\end{proposition}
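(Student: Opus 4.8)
The plan is to pass to the modified Macdonald basis: expand $e_n$ via \eqref{eq:en_expansion}, apply $\nabla$ (which agrees with $\Delta_{e_n}$ on $\Lambda^{(n)}$), and then specialize $t=0$, at which point only the one-row term $\mu=(n)$ can contribute. Comparing the resulting normalization with \eqref{eq:Hn} then yields all three equalities at once, since the right-hand side of \eqref{eq:Hn} has no $t$ in it.

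Concretely, first I would apply $\nabla$ to \eqref{eq:en_expansion}. Since $\nabla$ is linear and $\nabla\widetilde H_\mu=T_\mu\widetilde H_\mu$, this yields
\[
\nabla e_n[X]=\sum_{\mu\vdash n}\frac{MB_\mu\Pi_\mu T_\mu\widetilde H_\mu[X]}{w_\mu}.
\]
Next I would note that each summand is a rational function in $q,t$ which is regular at $t=0$: every factor of $w_\mu=\prod_{c\in\mu}(q^{a_\mu(c)}-t^{l_\mu(c)+1})(t^{l_\mu(c)}-q^{a_\mu(c)+1})$ specializes at $t=0$ to a nonzero polynomial in $q$, so $w_\mu|_{t=0}\neq 0$ and one may evaluate the sum term by term. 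Since $T_\mu=\prod_{c\in\mu}q^{a_\mu'(c)}t^{l_\mu'(c)}$ is divisible by $t$ as soon as $\mu$ has a cell of positive co-leg, i.e. whenever $\mu\neq(n)$, the only surviving term at $t=0$ is the one for $\mu=(n)$. A short computation with the explicit values in \eqref{eq:obvious} (or, bypassing it, the remark that $\langle\nabla e_n,h_n\rangle=\langle e_n,e_n\rangle=1$ by \eqref{eq:lem_e_h_Delta}, together with $\langle\widetilde H_{(n)},h_n\rangle=1$ deduced from \eqref{eq:Hn} and \eqref{eq:h_q_prspec}) shows that the scalar $\frac{MB_{(n)}\Pi_{(n)}T_{(n)}}{w_{(n)}}$ equals $1$ at $t=0$. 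Hence ${\nabla e_n}|_{t=0}=\widetilde H_{(n)}[X;q,0]$, and \eqref{eq:Hn} finishes the chain, giving both $\widetilde H_{(n)}[X;q,0]=\widetilde H_{(n)}[X;q,t]$ and the closed form $h_n\!\left[\frac{X}{1-q}\right]\prod_{i=1}^n(1-q^i)$.

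The statement is in essence a computation in the Macdonald basis, so there is no serious obstacle; the only point requiring care is the legitimacy of setting $t=0$ inside the sum, which rests precisely on the non-vanishing of every denominator $w_\mu$ at $t=0$, so that all terms with $\mu\neq(n)$ genuinely drop out and the surviving coefficient can be identified with the required normalization.
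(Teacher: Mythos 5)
Your proposal is correct and follows essentially the same route as the paper: expand $\nabla e_n$ in the Macdonald basis via \eqref{eq:en_expansion}, observe that $T_\mu(q,0)=0$ unless $\mu=(n)$, and match the surviving coefficient against \eqref{eq:Hn} using \eqref{eq:obvious}. Your extra remarks (the non-vanishing of $w_\mu$ at $t=0$, and the alternative normalization check via $\langle\nabla e_n,h_n\rangle=1$) are sound but not needed beyond what the paper already does.
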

\begin{proof}
	The result easily follows from \eqref{eq:Hn}, the expansion (cf. \eqref{eq:en_expansion})
	\begin{equation}
	\nabla e_n = \sum_{\mu\vdash n}T_\lambda\frac{M B_\mu \Pi_\mu \widetilde{H}_\mu[X]}{w_\mu},
	\end{equation}
	the obvious
	\begin{equation}
	T_\lambda(q,0)=\delta_{\lambda,(n)}T_{(n)}(q,0),
	\end{equation} 	
	and the identities \eqref{eq:obvious}.
\end{proof}

The following identity is \cite{Can-Loehr-2006}*{Theorem~4}:
\begin{equation} \label{eq:pn_Enk}
\omega(p_n)=\sum_{k=1}^n\frac{[n]_q}{[k]_q}E_{n,k}.
\end{equation}

\subsection{The family $F_{n,k;p}^{(d,\ell)}$}

Set
\begin{equation}
F_{n,k;p}^{(d,\ell)} \coloneqq t^{n-k-\ell} \< \Delta_{h_{n-k-\ell}} \Delta_{e_\ell} e_{n+p-d} \left[ X \frac{1-q^k}{1-q} \right], e_p h_{n-d} \>.
\end{equation}
We already considered this family in \cite{DAdderio-Iraci-VandenWyngaerd-GenDeltaSchroeder}*{Section~3.3}. We are going to recall here some of the results from that article.

The family of plethystic formulae $F_{n,k;p}^{(d,\ell)}$ satisfy the following recursion.

\begin{theorem}[\cite{DAdderio-Iraci-VandenWyngaerd-GenDeltaSchroeder}*{Corollary~3.5}]\label{thm: F iterated recursion}
	For $k,\ell,d,p\geq 0$, $n\geq k+\ell$ and $n+p\geq d$, the $F_{n,k;p}^{(d,\ell)}$ satisfy the following recursion: for $n\geq 1$ 
	\begin{equation}
	F_{n,n;p}^{(d,\ell)}=\delta_{\ell,0}q^{\binom{n-d}{2}}\qbinom{n}{n-d}\qbinom{n+p-1}{p}
	\end{equation}
	and, for $n\geq 1$ and $1\leq k<n$,
	\begin{align}
		F_{n,k;p}^{(d,\ell)} = & t^{n-k-\ell} \sum_{j=0}^{p} \sum_{s=0}^{k} q^{\binom{s}{2}} \qbinom{k}{s}_q \qbinom{k+j-1}{j}_q \\ 
		\notag	& \times  t^{p-j} \sum_{u=0}^{n-k-\ell} \sum_{v=0}^{s+j} q^{\binom{v}{2}} \qbinom{s+j}{v}_q \qbinom{s+j+u-1}{u}_q F_{n-k,u+v;p-j}^{(d-k+s,\ell-v)},
	\end{align}
	with initial conditions
	\begin{equation}
	F_{0,k;p}^{(d,\ell)}=\delta_{k,0}\delta_{p,0}\delta_{d,0}\delta_{\ell,0}\qquad \text{and} \qquad F_{n,0;p}^{(d,\ell)}=\delta_{n,0}\delta_{p,0}\delta_{d,0}\delta_{\ell,0}.
	\end{equation}
\end{theorem}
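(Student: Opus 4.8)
The plan is to follow the strategy of \cite{DAdderio-Iraci-VandenWyngaerd-GenDeltaSchroeder}: dispose of the boundary and initial values by a direct plethystic computation, and obtain the main recursion by combining the $E_{n,k}$-expansion of $e_{n+p-d}\!\left[X\tfrac{1-q^{k}}{1-q}\right]$ with the Pieri rules \eqref{eq:def_cmunu}--\eqref{eq:def_dmunu}, Macdonald reciprocity \eqref{eq:Macdonald_reciprocity}, and the $q$-binomial evaluations \eqref{eq:h_q_binomial}--\eqref{eq:e_q_binomial}, arguing by induction on $n$.

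For the boundary and initial conditions I would argue straight from the definition. Writing $e_{N}[XY]=\sum_{\lambda\vdash N}s_{\lambda}[X]s_{\lambda'}[Y]$ (dual Cauchy) and using the Pieri rule $h_{p}e_{n-d}=s_{(p,1^{n-d})}+s_{(p+1,1^{n-d-1})}$, one gets $\langle e_{N}[XY],e_{p}h_{n-d}\rangle=(h_{p}e_{n-d})[Y]$ for $N=n+p-d$. When $k=n$ one has $n-k-\ell=-\ell$, so $\Delta_{h_{-\ell}}$ annihilates everything unless $\ell=0$, in which case both delta operators are the identity and $t^{n-k-\ell}=1$; specializing $Y=\tfrac{1-q^{n}}{1-q}=[n]_{q}$ and applying \eqref{eq:e_q_binomial} and \eqref{eq:h_q_binomial} yields $(h_{p}e_{n-d})[[n]_{q}]=q^{\binom{n-d}{2}}\qbinom{n}{n-d}_{q}\qbinom{n+p-1}{p}_{q}$, which is the claimed value of $F_{n,n;p}^{(d,\ell)}$. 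The cases $n=0$ and $k=0$ are immediate, since $e_{N}[X\cdot 0]=\delta_{N,0}$ and $h_{-\ell},h_{-d}$ vanish unless their indices are $0$.

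For the main recursion with $1\le k<n$ I would first use \eqref{eq:def_Enk} with $z=q^{k}$ together with \eqref{eq:h_q_binomial} to write $e_{n+p-d}\!\left[X\tfrac{1-q^{k}}{1-q}\right]=\sum_{j\ge 1}\qbinom{k+j-1}{j}_{q}\,E_{n+p-d,\,j}$, which accounts for the factor $\qbinom{k+j-1}{j}_{q}=h_{j}[[k]_{q}]$ in the statement and reduces everything to a recursion for $\langle \Delta_{h_{n-k-\ell}}\Delta_{e_{\ell}}E_{N,j},e_{p}h_{n-d}\rangle$. Expanding $E_{N,j}$ in the modified Macdonald basis, the operators $\Delta_{e_{\ell}}$ and $\Delta_{h_{n-k-\ell}}$ act as multiplication by $e_{\ell}[B_{\mu}]$ and $h_{n-k-\ell}[B_{\mu}]$; applying the Pieri rules \eqref{eq:def_cmunu}--\eqref{eq:def_dmunu} and Macdonald reciprocity \eqref{eq:Macdonald_reciprocity} one peels off, in two rounds, the part of $\mu$ responsible for the "first return". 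The first round produces the sums over $s$ (with $e_{s}[[k]_{q}]=q^{\binom{s}{2}}\qbinom{k}{s}_{q}$, shifting $d\mapsto d-k+s$) and over $j$ (from the $[k]_{q}$-alphabet, shifting $p\mapsto p-j$) and passes from the alphabet $[k]_{q}$ to $[s+j]_{q}$; the second round produces the sums over $v$ (with $e_{v}[[s+j]_{q}]=q^{\binom{v}{2}}\qbinom{s+j}{v}_{q}$, shifting $\ell\mapsto\ell-v$) and over $u$ (with $h_{u}[[s+j]_{q}]=\qbinom{s+j+u-1}{u}_{q}$), leaving the residual alphabet $[u+v]_{q}$, while the size index drops by $k$; tracking the degrees gives the overall factor $t^{n-k-\ell}$ and the factor $t^{p-j}$. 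Collecting these four nested $q$-binomial sums and invoking the inductive hypothesis identifies the inner summand with $F_{n-k,\,u+v;\,p-j}^{(d-k+s,\,\ell-v)}$.

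The main obstacle is exactly this last bookkeeping step: pinning down which piece of the Macdonald/Pieri expansion corresponds to the "first return", matching all four $q$-binomial factors and the two $t$-powers to the shifted parameters $(n-k,\,u+v,\,p-j,\,d-k+s,\,\ell-v)$, and checking that the degenerate ranges (e.g. $\ell=0$, $u+v=0$, $d-k+s<0$, or $N-j-s<0$) collapse to the stated initial conditions rather than producing spurious terms. Since the non-square companion identity is \cite{DAdderio-Iraci-VandenWyngaerd-GenDeltaSchroeder}*{Corollary~3.5}, which is proved in precisely this fashion, one may in fact invoke that argument verbatim.
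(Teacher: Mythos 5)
First, note that this paper does not prove Theorem~\ref{thm: F iterated recursion} at all: it is imported verbatim as Corollary~3.5 of \cite{DAdderio-Iraci-VandenWyngaerd-GenDeltaSchroeder}, so there is no internal proof to compare against, and your closing remark that one may invoke the cited argument is, in context, exactly what the paper does. Your direct verifications of the boundary and initial values are correct and complete: for $k=n$ the factor $h_{-\ell}[B_\mu]$ kills everything unless $\ell=0$, and the dual Cauchy identity gives $\<e_{n+p-d}[X[n]_q],e_ph_{n-d}\>=(h_pe_{n-d})[[n]_q]$, which evaluates via \eqref{eq:h_q_binomial} and \eqref{eq:e_q_binomial} to the stated product of $q$-binomials; the cases $n=0$ and $k=0$ degenerate as you say.

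The sketch of the main recursion, however, is not a proof, and it contains a concrete misattribution. You first claim that the factor $\qbinom{k+j-1}{j}_q$ arises as the coefficient of $E_{n+p-d,j}$ in the expansion \eqref{eq:def_Enk} of $e_{n+p-d}\left[X\frac{1-q^k}{1-q}\right]$ at $z=q^k$; but the index of that expansion runs over $1\le j\le n+p-d$ and has nothing to do with the parameter $p$, whereas in the stated recursion $j$ runs from $0$ to $p$ and is manifestly tied to $p$ through $t^{p-j}$ and the subscript $p-j$ of $F_{n-k,u+v;p-j}^{(d-k+s,\ell-v)}$. A few lines later you (correctly) attribute the same factor to $h_j[[k]_q]$ arising from splitting the alphabet in the $e_p$ part; both cannot be its source, and only the second is consistent with the shape of the recursion. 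This inconsistency is symptomatic of the fact that the ``two rounds of Pieri and reciprocity'' are described but never carried out: which piece of $B_\mu$ plays the role of $[k]_q$, how the four $q$-binomials and the two powers of $t$ are matched to the shifted parameters $(n-k,\,u+v,\,p-j,\,d-k+s,\,\ell-v)$, and why the degenerate ranges collapse to the initial conditions are precisely the content of the theorem, and none of this is checked. Since the result is quoted from \cite{DAdderio-Iraci-VandenWyngaerd-GenDeltaSchroeder}, deferring to that proof is acceptable here; but as a standalone argument your proposal establishes only the boundary and initial conditions.
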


The $F_{n,k;p}^{(d,\ell)}$ can be rewritten in the following way.
\begin{lemma}[\cite{DAdderio-Iraci-VandenWyngaerd-GenDeltaSchroeder}*{Lemma~3.6}]
	For $k,\ell,d,p\geq 0$, $n\geq k+\ell$ and $n+p\geq d$, we have
	\begin{equation} \label{eq:lemnablaEnk}
	F_{n,k;p}^{(d,\ell)} = \sum_{\gamma\vdash n+p-d}\left.(\mathbf{\Pi}^{-1}\nabla E_{n-\ell,k}[X])\right|_{X=MB_\gamma} \frac{\Pi_\gamma}{w_\gamma}e_{\ell}[B_\gamma]e_p[B_\gamma],
	\end{equation}
	where $\mathbf{\Pi}$ is the invertible linear operator defined by
	\begin{equation} \label{eq:defPi}
	\mathbf{\Pi} \widetilde{H}_\mu[X] = \Pi_\mu \widetilde{H}_\mu[X] \qquad \text{ for all } \mu.
	\end{equation}
\end{lemma}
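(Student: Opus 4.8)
The plan is to evaluate both sides in the modified Macdonald basis, passing between the Hall pairing and plethystic evaluations at the $MB_\gamma$ via Macdonald--Koornwinder reciprocity and the Cauchy identity. First I would rewrite the pairing defining $F_{n,k;p}^{(d,\ell)}$: by \eqref{eq:lem_e_h_Delta} (applied with $d=p$),
\[
F_{n,k;p}^{(d,\ell)} = t^{n-k-\ell}\left\langle \Delta_{e_p}\Delta_{h_{n-k-\ell}}\Delta_{e_\ell}\, e_{n+p-d}\!\left[X\tfrac{1-q^k}{1-q}\right],\; h_{n+p-d}\right\rangle .
\]
Since $\tfrac{1-q^k}{1-q}=\tfrac{(1-q^k)(1-t)}{M}$, the Cauchy identity \eqref{eq:Mac_Cauchy} expands $e_{n+p-d}[X\tfrac{1-q^k}{1-q}]=\sum_{\mu\vdash n+p-d}w_\mu^{-1}\,\widetilde{H}_\mu[X]\,\widetilde{H}_\mu[(1-q^k)(1-t)]$. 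The operators $\Delta_{e_p},\Delta_{h_{n-k-\ell}},\Delta_{e_\ell}$ act diagonally on $\{\widetilde{H}_\mu\}$, and $\langle\widetilde{H}_\mu,h_N\rangle=1$ for every $\mu\vdash N$ (immediate from \eqref{eq:Hn} when $\mu$ is a single row, and standard in general), so the pairing collapses to
\[
F_{n,k;p}^{(d,\ell)} = t^{n-k-\ell}\sum_{\mu\vdash n+p-d}\frac{\widetilde{H}_\mu[(1-q^k)(1-t)]}{w_\mu}\,e_\ell[B_\mu]\,h_{n-k-\ell}[B_\mu]\,e_p[B_\mu].
\]

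Next, writing $(1-q^k)(1-t)=MB_{(k)}$ (using $B_{(k)}=[k]_q$, cf.\ \eqref{eq:obvious}) and applying reciprocity \eqref{eq:Macdonald_reciprocity} together with \eqref{eq:Hn}, one simplifies $\widetilde{H}_\mu[(1-q^k)(1-t)]=(1-q^k)\,\Pi_\mu\,h_k[(1-t)B_\mu]$. Substituting, the left-hand side becomes $\sum_{\mu\vdash n+p-d} c_\mu\cdot\tfrac{\Pi_\mu}{w_\mu}\,e_\ell[B_\mu]\,e_p[B_\mu]$ with $c_\mu\coloneqq t^{n-k-\ell}(1-q^k)\,h_k[(1-t)B_\mu]\,h_{n-k-\ell}[B_\mu]$, and the right-hand side of the lemma has exactly the same shape but with $c_\mu$ replaced by $\big(\mathbf{\Pi}^{-1}\nabla E_{n-\ell,k}\big)[MB_\mu]$. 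Matching summands, the lemma therefore reduces to the identity
\[
\left.\big(\mathbf{\Pi}^{-1}\nabla E_{m,k}\big)[X]\right|_{X=MB_\mu} \;=\; t^{\,m-k}(1-q^k)\,h_k\big[(1-t)B_\mu\big]\,h_{m-k}[B_\mu],
\]
to be proved for every partition $\mu$ and every $m\coloneqq n-\ell\ge k\ge 1$; this statement no longer mentions $p$ or $d$, so it needs to be established only once.

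This $\mu$-pointwise identity is where the real content lies, and I would prove it by a generating-function argument in an auxiliary variable $z$. By the defining expansion \eqref{eq:def_Enk} (with $z$ in place of $q^k$), applying the linear functional $f\mapsto\big(\mathbf{\Pi}^{-1}\nabla f\big)[MB_\mu]$ shows that $\big(\mathbf{\Pi}^{-1}\nabla E_{m,k}\big)[MB_\mu]$ is the coefficient of $\tfrac{(z;q)_k}{(q;q)_k}$ in the finite, unique expansion of $\Psi_\mu(z)\coloneqq\big(\mathbf{\Pi}^{-1}\nabla e_m[X\tfrac{1-z}{1-q}]\big)[MB_\mu]$ in the polynomials $\big\{\tfrac{(z;q)_j}{(q;q)_j}\big\}_{0\le j\le m}$. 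One then evaluates $\Psi_\mu$ directly: expand $e_m[X\tfrac{1-z}{1-q}]$ by the Cauchy identity \eqref{eq:Mac_Cauchy}, push $\nabla$ and $\mathbf{\Pi}^{-1}$ through the sum, specialize at $MB_\mu$, apply reciprocity \eqref{eq:Macdonald_reciprocity} and \eqref{eq:Hn} once more to collapse the product of $\widetilde{H}$-evaluations, and read off the coefficient of $\tfrac{(z;q)_k}{(q;q)_k}$ using the $q$-binomial identity \eqref{eq:h_q_binomial}; one must check this equals $t^{m-k}(1-q^k)h_k[(1-t)B_\mu]h_{m-k}[B_\mu]$. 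This last check --- equivalently, the symmetric-function identity $\nabla E_{m,k}=t^{m-k}(1-q^k)\,\mathbf{\Pi}\!\big(h_k[\tfrac{X}{1-q}]\,h_{m-k}[\tfrac{X}{M}]\big)$ --- is the main obstacle: one has to show that after $\nabla$ has acted, all dependence on the individual $T_\rho$ cancels and the answer factors as cleanly as stated. (Since the statement is literally \cite{DAdderio-Iraci-VandenWyngaerd-GenDeltaSchroeder}*{Lemma~3.6}, one may also simply invoke that proof; the identity is in the spirit of the $E_{n,k}$-manipulations of \cite{DAdderio-VandenWyngaerd-2017}.)
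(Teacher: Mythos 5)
This lemma is imported verbatim from the companion paper \cite{DAdderio-Iraci-VandenWyngaerd-GenDeltaSchroeder}; the present paper contains no proof of it, so I can only assess your argument on its own merits. The first half of your reduction is correct and complete: applying \eqref{eq:lem_e_h_Delta} to move $e_p$ into a $\Delta_{e_p}$, expanding $e_{n+p-d}\bigl[X\tfrac{1-q^k}{1-q}\bigr]=e_{n+p-d}\bigl[\tfrac{X\cdot(1-q^k)(1-t)}{M}\bigr]$ by \eqref{eq:Mac_Cauchy}, using $\<\widetilde H_\mu,h_N\>=1$, and then writing $(1-q^k)(1-t)=MB_{(k)}$ and invoking reciprocity \eqref{eq:Macdonald_reciprocity} together with \eqref{eq:Hn} to get $\widetilde H_\mu[(1-q^k)(1-t)]=(1-q^k)\Pi_\mu h_k[(1-t)B_\mu]$ --- all of this checks out, and it correctly reduces the lemma to the single $\mu$-pointwise identity
\begin{equation*}
\left.\bigl(\mathbf{\Pi}^{-1}\nabla E_{m,k}\bigr)[X]\right|_{X=MB_\mu}=t^{m-k}(1-q^k)\,h_k[(1-t)B_\mu]\,h_{m-k}[B_\mu],\qquad m=n-\ell .
\end{equation*}

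The gap is that this identity is the entire content of the lemma, and your proposal does not prove it. The generating-function sketch you offer stalls exactly where you suspect it does: expanding $e_m\bigl[X\tfrac{1-z}{1-q}\bigr]$ by Cauchy, applying $\mathbf{\Pi}^{-1}\nabla$, specializing at $MB_\mu$ and using reciprocity leaves you with $\Pi_\mu^{-1}\sum_{\rho\vdash m}T_\rho\,\widetilde H_\mu[MB_\rho]\,\widetilde H_\rho[(1-z)(1-t)]/w_\rho$, and there is no second application of reciprocity or of \eqref{eq:Hn} that collapses this sum --- the cancellation of the individual $T_\rho$ is precisely the nontrivial theorem $\nabla E_{m,k}=t^{m-k}(1-q^k)\,\mathbf{\Pi}\bigl(h_k[\tfrac{X}{1-q}]h_{m-k}[\tfrac{X}{M}]\bigr)$, which goes back to the Garsia--Haglund $q,t$-Catalan analysis and is the form in which \cite{DAdderio-VandenWyngaerd-2017} and the companion paper actually use it. (The identity is true; I checked it directly for $m\le 2$, e.g.\ $\mathbf{\Pi}^{-1}\nabla E_{2,1}=t\,p_1^2/M$ and $\mathbf{\Pi}^{-1}\nabla E_{2,2}=(1-q^2)h_2[\tfrac{X}{1-q}]$.) So your write-up is an honest and correct reduction to a known result plus an explicit admission that you cannot prove that result; as a self-contained proof it is incomplete, and to close it you should either cite the $\nabla E_{n,k}$ expansion precisely or reproduce its proof, rather than the $z$-expansion route, which does not terminate as sketched.
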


The interest in the $F_{n,k;p}^{(d,\ell)}$ lies in the following theorem.

\begin{theorem}[\cite{DAdderio-Iraci-VandenWyngaerd-GenDeltaSchroeder}*{Theorem~3.7}] \label{thm:recoFnkpdl}
	For $\ell,d,p\geq 0$, $n\geq \ell+1$ and $n\geq d$, we have
	$$
	\sum_{k=1}^{n-\ell}F_{n,k;p}^{(d,\ell)}= \< \Delta_{h_p}\Delta_{e_{n-\ell-1}}'e_n, e_{n-d}h_{d} \>.
	$$
\end{theorem}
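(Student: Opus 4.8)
The plan is to turn the statement into an identity between explicit plethystic formulae, prove it on the left by Macdonald--Koornwinder reciprocity, and then match it with the right-hand side via the Pieri rules. First I would sum the rewriting \eqref{eq:lemnablaEnk} of $F_{n,k;p}^{(d,\ell)}$ over $k$: since $\mathbf{\Pi}^{-1}$ and $\nabla$ are linear and $\sum_{k=1}^{n-\ell}E_{n-\ell,k}=e_{n-\ell}$ by \eqref{eq:Enk}, this gives
\[
\sum_{k=1}^{n-\ell}F_{n,k;p}^{(d,\ell)}=\sum_{\gamma\vdash n+p-d}\left.\bigl(\mathbf{\Pi}^{-1}\nabla e_{n-\ell}[X]\bigr)\right|_{X=MB_\gamma}\frac{\Pi_\gamma}{w_\gamma}\,e_\ell[B_\gamma]\,e_p[B_\gamma].
\]

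Next I would simplify this expression. Expanding $\nabla e_{n-\ell}$ by \eqref{eq:en_expansion} and applying $\mathbf{\Pi}^{-1}$ (which removes the factor $\Pi_\mu$) gives $\mathbf{\Pi}^{-1}\nabla e_{n-\ell}[X]=\sum_{\mu\vdash n-\ell}\frac{T_\mu MB_\mu}{w_\mu}\widetilde H_\mu[X]$; substituting $X=MB_\gamma$ and using reciprocity \eqref{eq:Macdonald_reciprocity} in the form $\widetilde H_\mu[MB_\gamma]\Pi_\gamma=\widetilde H_\gamma[MB_\mu]\Pi_\mu$ interchanges the two partitions and produces a sum over $\gamma\vdash n+p-d$ of the shape $\sum_\gamma w_\gamma^{-1}\widetilde H_\gamma[MB_\mu]\,e_\ell[B_\gamma]e_p[B_\gamma]$. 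Reading $e_\ell[B_\gamma]e_p[B_\gamma]\widetilde H_\gamma$ as $\Delta_{e_\ell}\Delta_{e_p}\widetilde H_\gamma$, using $\sum_{\gamma\vdash N}w_\gamma^{-1}\widetilde H_\gamma[X]=e_N[X/M]$ (set the second alphabet to $1$ in \eqref{eq:Mac_Cauchy}, where $\widetilde H_\gamma[1]=1$), and the identity $\Delta_{e_k}e_N[X/M]=h_k[X/M]\,e_{N-k}[X/M]$ (immediate from \eqref{eq:e_h_expansion}), the inner sum collapses, so that
\[
\sum_{k=1}^{n-\ell}F_{n,k;p}^{(d,\ell)}=\sum_{\mu\vdash n-\ell}\frac{T_\mu MB_\mu\Pi_\mu}{w_\mu}\left.\bigl(\Delta_{e_\ell}\bigl(h_p[X/M]\,e_{n-d}[X/M]\bigr)\bigr)\right|_{X=MB_\mu}.
\]

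On the other side I would use \eqref{eq:lem_e_h_Delta} to write $\langle\Delta_{h_p}\Delta'_{e_{n-\ell-1}}e_n,e_{n-d}h_d\rangle=\langle\Delta_{e_{n-d}}\Delta_{h_p}\Delta'_{e_{n-\ell-1}}e_n,h_n\rangle$; since $h_n=s_{(n)}$ and $\langle\widetilde H_\sigma,s_{(n)}\rangle=1$ for every $\sigma\vdash n$, the expansion \eqref{eq:en_expansion} of $e_n$ gives
\[
\langle\Delta_{h_p}\Delta'_{e_{n-\ell-1}}e_n,e_{n-d}h_d\rangle=\sum_{\sigma\vdash n}\frac{MB_\sigma\Pi_\sigma}{w_\sigma}\,e_{n-d}[B_\sigma]\,h_p[B_\sigma]\,e_{n-\ell-1}[B_\sigma-1].
\]
To match the two expressions I would write $e_{n-\ell-1}[B_\sigma-1]=e_{n-\ell-1}[B_\sigma]-e_{n-\ell-2}[B_\sigma]$, peel the factors $e_{n-d}[B_\sigma]$ and $e_{n-\ell-1}[B_\sigma]-e_{n-\ell-2}[B_\sigma]$ off as Pieri sums using \eqref{eq:summation}, rewrite the resulting $c$-coefficients as $d$-coefficients via \eqref{eq:rel_cmunu_dmunu}, and carry out the sum over $\sigma$ against the creation identity \eqref{eq:def_dmunu}; this should collapse the sum over $\sigma\vdash n$ to the sum over $\mu\vdash n-\ell$ with the weight $\frac{T_\mu MB_\mu\Pi_\mu}{w_\mu}$ and precisely the $\Delta_{e_\ell}$-plethysm obtained above.

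I expect this last matching to be the main obstacle: one has to propagate the plethystic substitution $X=MB_\mu$ through several layers of Pieri rules and, in particular, handle the $-1$ inside $e_{n-\ell-1}[B_\sigma-1]$ --- the feature that distinguishes $\Delta'$ from $\Delta$ --- which is exactly what accounts for the index shift between the $e_\ell$ on the left and the $e_{n-\ell-1}$ on the right. If this direct route turns out to be too delicate, a safer alternative is to avoid closed forms altogether: the left-hand side obeys the recursion of Theorem~\ref{thm: F iterated recursion}, the right-hand side obeys the analogous generalized Delta--Schr\"oder recursion (obtained from the standard manipulations of $\nabla$, $\Delta_{h_p}$ and the Pieri coefficients), and the two have matching base cases, so the identity follows by induction on $n$.
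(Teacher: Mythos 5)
The first half of your reduction is sound, and it is essentially the computation the paper itself performs for the companion identity in Theorem~\ref{thm:sumSchroeder}: summing \eqref{eq:lemnablaEnk} over $k$ via \eqref{eq:Enk}, applying reciprocity \eqref{eq:Macdonald_reciprocity}, and collapsing the inner sum with \eqref{eq:Mac_Cauchy} and \eqref{eq:e_h_expansion} correctly gives
\begin{equation*}
\sum_{k=1}^{n-\ell}F_{n,k;p}^{(d,\ell)}=\sum_{\mu\vdash n-\ell}\frac{T_\mu MB_\mu\Pi_\mu}{w_\mu}\left.\bigl(\Delta_{e_\ell}\bigl(h_p[X/M]\,e_{n-d}[X/M]\bigr)\bigr)\right|_{X=MB_\mu},
\end{equation*}
and your expansion of the right-hand side as $\sum_{\sigma\vdash n}\frac{MB_\sigma\Pi_\sigma}{w_\sigma}e_{n-d}[B_\sigma]h_p[B_\sigma]e_{n-\ell-1}[B_\sigma-1]$ is also correct.

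The gap is in the matching step, and it is twofold. First, the identity you start from is false: since $e_i[-1]=(-1)^i$ for all $i\geq 0$, one has $e_{n-\ell-1}[B_\sigma-1]=\sum_{i\geq 0}(-1)^ie_{n-\ell-1-i}[B_\sigma]$, a full alternating sum, not the two-term difference $e_{n-\ell-1}[B_\sigma]-e_{n-\ell-2}[B_\sigma]$; the genuine two-term relation runs the other way, $e_j[B_\sigma]=e_j[B_\sigma-1]+e_{j-1}[B_\sigma-1]$ (cf.\ \eqref{eq:deltaprime}). Second, \eqref{eq:summation} cannot ``peel off'' the relevant factor: the weight $\frac{T_\mu MB_\mu\Pi_\mu}{w_\mu}$ on the left carries an extra $B_\mu$ that $\sum_{\nu\subset_\ell\sigma}c_{\sigma\nu}^{(\ell)}T_\nu=e_{n-\ell}[B_\sigma]$ does not produce. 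What is actually needed is the refinement $e_{n-\ell-1}[B_\sigma-1]\,B_\sigma=\sum_{\nu\subset_\ell\sigma}c_{\sigma\nu}^{(\ell)}B_\nu T_\nu$ (this is \eqref{eq:lem52}, used in the Appendix), which treats the product $e_{n-\ell-1}[B_\sigma-1]B_\sigma$ as a single unit and is precisely the mechanism that converts the $-1$ of $\Delta'$ into the index shift you flagged as the main obstacle; combined with \eqref{eq:rel_cmunu_dmunu} and either \eqref{eq:HaglundThm} or a second run of the reciprocity argument, it closes the proof, whereas your peeling plan as written does not go through. Finally, your fallback by induction is circular: that the right-hand side satisfies the recursion of Theorem~\ref{thm: F iterated recursion} is not a ``standard manipulation'' but is essentially the content of the result being proved, so it cannot simply be asserted.
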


\subsection{The family $S_{n,k;p}^{(d,\ell)}$}

Set 
\begin{equation}
S_{n,k;p}^{(d,\ell)} \coloneqq \frac{[n]_q}{[k]_q}	F_{n,k;p}^{(d,\ell)}.
\end{equation}

We have the following recursion for $S_{n,k;p}^{(d,\ell)}$.
\begin{theorem} \label{thm:Snkpdl_recursion}
	For $k,\ell,d,p\geq 0$, $n\geq k+\ell$ and $n\geq d$, the $S_{n,k;p}^{(d,\ell)}$ satisfy the following recursion: for $n\geq 1$ 
	\begin{equation}
	S_{n,n;p}^{(d,\ell)}=\delta_{\ell,0}q^{\binom{n-d}{2}}\qbinom{n}{n-d}\qbinom{n+p-1}{p}
	\end{equation}
	and, for $n\geq 1$ and $1\leq k<n$,
	\begin{align*}
		S_{n,k;p}^{(d,\ell)}& = F_{n,k;p}^{(d,\ell)}+q^kt^{n-\ell-k} \sum_{j=0}^p \sum_{s=0}^k q^{\binom{s}{2}}\qbinom{s+j}{s}_q\qbinom{k+j-1}{s+j-1}_q \times\\
		& \quad \times t^{p-j}\sum_{u=0}^{n-\ell-k} \sum_{v=0}^{s+j} q^{\binom{v}{2}}\qbinom{u+v}{v}_q \qbinom{s+j+u-1}{s+j-v}_q S_{n-k,u+v;p-j}^{(d-k+s,\ell-v)},
	\end{align*}
	with initial conditions
	\begin{equation}
	S_{0,k;p}^{(d,\ell)}=\delta_{k,0}\delta_{p,0}\delta_{d,0}\delta_{\ell,0}\qquad \text{and} \qquad S_{n,0;p}^{(d,\ell)}=\delta_{n,0}\delta_{p,0}\delta_{d,0}\delta_{\ell,0}.
	\end{equation}
\end{theorem}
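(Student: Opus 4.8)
The plan is to \emph{derive the recursion for $S_{n,k;p}^{(d,\ell)}$ directly from the one for $F_{n,k;p}^{(d,\ell)}$} given in Theorem~\ref{thm: F iterated recursion}, simply by multiplying through by $\frac{[n]_q}{[k]_q}$ and repackaging the right-hand side. Since $S_{n,k;p}^{(d,\ell)}=\frac{[n]_q}{[k]_q}F_{n,k;p}^{(d,\ell)}$ by definition, the base case $k=n$ and the initial conditions are immediate: for $k=n$ the factor $\frac{[n]_q}{[k]_q}$ equals $1$, and the degenerate cases $n=0$ or $k=0$ force $F$ (hence $S$) to be a product of Kronecker deltas, which is unchanged by the prefactor (with the convention that the empty-index cases are read off directly). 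So the whole content is the main recursion for $1\le k<n$.

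For that, I would start from
\[
S_{n,k;p}^{(d,\ell)} = \frac{[n]_q}{[k]_q} F_{n,k;p}^{(d,\ell)}
\]
and substitute the iterated recursion for $F_{n,k;p}^{(d,\ell)}$ from Theorem~\ref{thm: F iterated recursion} on the right. Inside that expression the inner $F$-terms are $F_{n-k,u+v;p-j}^{(d-k+s,\ell-v)}$; to produce $S$-terms one writes
\[
F_{n-k,u+v;p-j}^{(d-k+s,\ell-v)} = \frac{[u+v]_q}{[n-k]_q} \, S_{n-k,u+v;p-j}^{(d-k+s,\ell-v)},
\]
so that after multiplying by $\frac{[n]_q}{[k]_q}$ the global scalar in front of the double/quadruple sum becomes $\frac{[n]_q}{[k]_q}\cdot\frac{[u+v]_q}{[n-k]_q}$. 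The key elementary identity to invoke is the $q$-analogue of Pascal/absorption, namely $\frac{[n]_q}{[k]_q[n-k]_q}\qbinom{n}{k}_q^{-1}\cdots$ — more precisely one uses the rewriting of each $q$-binomial product so that a factor of $[u+v]_q$ from the numerator of $S$ combines with the $\qbinom{s+j}{v}_q$ and $\qbinom{s+j+u-1}{u}_q$ appearing in the $F$-recursion to give instead $\qbinom{u+v}{v}_q$ and $\qbinom{s+j+u-1}{s+j-v}_q$, and similarly the outer $\qbinom{k}{s}_q\qbinom{k+j-1}{j}_q$ get rewritten as $\qbinom{s+j}{s}_q\qbinom{k+j-1}{s+j-1}_q$, with the leftover $q$-integer ratios collapsing to the clean prefactor $q^k t^{n-\ell-k}$ (times the already-present $t^{p-j}$). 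The term $F_{n,k;p}^{(d,\ell)}$ that appears as the first summand on the right of the claimed recursion arises from the boundary contribution $s=k$, $j=0$ (and correspondingly $u=0$, $v=0$) after one checks that $\frac{[n]_q}{[k]_q}\cdot(\text{that term of the }F\text{-recursion}) = F_{n,k;p}^{(d,\ell)}$ exactly; I would isolate that term first and treat the rest as the genuine recursive part.

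The main obstacle is the bookkeeping of the $q$-binomial and $q$-integer manipulations: one must carefully verify identities of the shape
\[
\frac{[n]_q\,[u+v]_q}{[k]_q\,[n-k]_q}\,\qbinom{k}{s}_q\qbinom{k+j-1}{j}_q\qbinom{s+j}{v}_q\qbinom{s+j+u-1}{u}_q
= q^{?}\,\qbinom{s+j}{s}_q\qbinom{k+j-1}{s+j-1}_q\qbinom{u+v}{v}_q\qbinom{s+j+u-1}{s+j-v}_q
\]
and confirm that the residual power of $q$ is exactly $q^k$ (independent of $s,j,u,v$) and that no residual power of $t$ is introduced beyond $t^{n-\ell-k}t^{p-j}$. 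Each such identity reduces, after clearing denominators, to a product of $q$-factorials that must match on the nose; the cleanest route is to rewrite everything in terms of $[m]_q!$, cancel, and check the surviving monomial in $q$. I expect this to be entirely mechanical once the correct grouping (factor out the $s=k,j=0$ boundary term, then match binomials pair-by-pair) is fixed, and it is essentially the same computation already carried out for the analogous Dyck-path family, so I would mirror that argument and only flag the one new feature, namely the extra factor $\frac{[n]_q}{[k]_q}$ and how it redistributes.
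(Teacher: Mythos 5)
Your overall strategy (multiply the recursion of Theorem~\ref{thm: F iterated recursion} by $\frac{[n]_q}{[k]_q}$, convert the inner $F$-terms to $S$-terms, and rearrange $q$-binomials) is the right one, but two specific steps as you describe them would fail. First, the standalone summand $F_{n,k;p}^{(d,\ell)}$ on the right-hand side does \emph{not} arise from the boundary contribution $s=k$, $j=0$, $u=v=0$: that term of the $F$-recursion contains the factor $F_{n-k,0;p-j}^{(d,\ell)}=\delta_{n-k,0}\delta_{p-j,0}\delta_{d,0}\delta_{\ell,0}$, which vanishes identically in the regime $1\leq k<n$, so there is nothing to isolate there. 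Second, your displayed ``key identity'' is false as stated: after substituting $F_{n-k,u+v;p-j}^{(\cdot)}=\frac{[u+v]_q}{[n-k]_q}S_{n-k,u+v;p-j}^{(\cdot)}$ the scalar in front of each summand is $\frac{[n]_q}{[k]_q}\cdot\frac{[u+v]_q}{[n-k]_q}$, and the binomial rearrangement absorbs exactly a factor $\frac{[u+v]_q}{[k]_q}$, leaving a residue of $\frac{[n]_q}{[n-k]_q}$ — which is not $q^k$, and is not even a polynomial in $q$ in general (e.g.\ $[3]_q/[2]_q$). No choice of grouping makes that residue a monomial term by term.

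The missing idea is the elementary splitting $\frac{[n]_q}{[k]_q}=1+q^k\frac{[n-k]_q}{[k]_q}$, applied \emph{before} expanding by the $F$-recursion. The ``$1$'' contributes the standalone $F_{n,k;p}^{(d,\ell)}$ outright (not via any boundary term), and only the piece $q^k\frac{[n-k]_q}{[k]_q}F_{n,k;p}^{(d,\ell)}$ is expanded recursively; there the factor $\frac{[n-k]_q}{[k]_q}\cdot\frac{[u+v]_q}{[n-k]_q}=\frac{[u+v]_q}{[k]_q}$ is absorbed cleanly, with no leftover power of $q$, via
\begin{equation*}
\frac{[u+v]_q}{[k]_q}\,\qbinom{k}{s}_q\qbinom{k+j-1}{j}_q\qbinom{s+j}{v}_q\qbinom{s+j+u-1}{u}_q
=\qbinom{s+j}{s}_q\qbinom{k+j-1}{s+j-1}_q\qbinom{u+v}{v}_q\qbinom{s+j+u-1}{s+j-v}_q,
\end{equation*}
which follows from two applications of the absorption identity (each side introduces and cancels a factor $[s+j]_q$). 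The prefactor $q^k$ thus comes from the splitting of $[n]_q$, not from the binomial bookkeeping. With this correction the rest of your outline (base case $k=n$, initial conditions) goes through as you say.
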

\begin{proof}
	The first identity follows immediately from the corresponding one in Theorem~\ref{thm: F iterated recursion}.
	
	For the second one, using the obvious
	\begin{equation}
	\frac{[n]_q}{[k]_q}=\frac{[k]_q+q^k[n-k]_q}{[k]_q}=1+q^k\frac{[n-k]_q}{[k]_q},
	\end{equation}
	and the recursion Theorem~\ref{thm: F iterated recursion}, we get
	\begin{align*}
		S_{n,k;p}^{(d,\ell)} & = \frac{[n]_q}{[k]_q}F_{n,k;p}^{(d,\ell)}\\
		& =F_{n,k;p}^{(d,\ell)} + q^k\frac{[n-k]_q}{[k]_q}F_{n,k;p}^{(d,\ell)}\\
		& =F_{n,k;p}^{(d,\ell)} + q^k\frac{[n-k]_q}{[k]_q} t^{n-k-\ell} \sum_{j=0}^{p} \sum_{s=0}^{k} q^{\binom{s}{2}} \qbinom{k}{s}_q \qbinom{k+j-1}{j}_q \\ 
		\notag	& \times  t^{p-j} \sum_{u=0}^{n-k-\ell} \sum_{v=0}^{s+j} q^{\binom{v}{2}} \qbinom{s+j}{v}_q \qbinom{s+j+u-1}{u}_q F_{n-k,u+v;p-j}^{(d-k+s,\ell-v)}\\
		& =F_{n,k;p}^{(d,\ell)} + q^kt^{n-k-\ell} \sum_{j=0}^{p} \sum_{s=0}^{k} q^{\binom{s}{2}} \qbinom{s+j}{j}_q \qbinom{k+j-1}{s+j-1}_q \\ 
		\notag	& \times  t^{p-j} \sum_{u=0}^{n-k-\ell} \sum_{v=0}^{s+j} q^{\binom{v}{2}} \qbinom{u+v}{v}_q \qbinom{s+j+u-1}{s+j-v}_q S_{n-k,u+v;p-j}^{(d-k+s,\ell-v)},
	\end{align*}
	where in the last equality we just rearranged suitably the $q$-binomials. The initial conditions are easy to check.
\end{proof}

The interest in the $S_{n,k;p}^{(d,\ell)}$ lies in the following theorem.
\begin{theorem} \label{thm:sumSchroeder}
	For $\ell,d,p\geq 0$, $n\geq \ell+1$ and $n\geq d$, we have
	\begin{equation}
	\sum_{k=1}^{n-\ell}S_{n,k;p}^{(d,\ell)}=\frac{[n-\ell]_t}{[n]_t}\<\Delta_{h_p}\Delta_{e_{n-\ell}}\omega (p_{n}),e_{n-d}h_{d}\> .
	\end{equation}
\end{theorem}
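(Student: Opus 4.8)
The plan is to collapse the left-hand side, via the plethystic rewriting of $F_{n,k;p}^{(d,\ell)}$ in Lemma~\eqref{eq:lemnablaEnk}, into a single Macdonald--Koornwinder reciprocity sum over partitions of $n+p-d$, and then to identify that sum with the right-hand side by the same sequence of manipulations used in \cite{DAdderio-Iraci-VandenWyngaerd-GenDeltaSchroeder} to prove Theorem~\ref{thm:recoFnkpdl}, the only new ingredient being the substitution of $\omega(p_{n-\ell})$ for $e_{n-\ell}$.

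First I would substitute $S_{n,k;p}^{(d,\ell)} = \frac{[n]_q}{[k]_q}F_{n,k;p}^{(d,\ell)}$ together with~\eqref{eq:lemnablaEnk} and sum over $k$. Since $\mathbf{\Pi}^{-1}$, $\nabla$ and the evaluation $X=MB_\gamma$ are linear, since $E_{m,k}=0$ for $k>m$, and since~\eqref{eq:pn_Enk} applied to $n-\ell$ gives $\sum_{k=1}^{n-\ell}\frac{[n-\ell]_q}{[k]_q}E_{n-\ell,k}=\omega(p_{n-\ell})$ and hence $\sum_{k=1}^{n-\ell}\frac{[n]_q}{[k]_q}E_{n-\ell,k}=\frac{[n]_q}{[n-\ell]_q}\omega(p_{n-\ell})$, this yields
\[
\sum_{k=1}^{n-\ell}S_{n,k;p}^{(d,\ell)} = \frac{[n]_q}{[n-\ell]_q}\sum_{\gamma\vdash n+p-d}\left.\big(\mathbf{\Pi}^{-1}\nabla\omega(p_{n-\ell})[X]\big)\right|_{X=MB_\gamma}\frac{\Pi_\gamma}{w_\gamma}\,e_\ell[B_\gamma]\,e_p[B_\gamma].
\]

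Next I would expand both sides in the modified Macdonald basis. On the right, using~\eqref{eq:p_expansion}, the diagonal action of $\Delta_{h_p}$ and $\Delta_{e_{n-\ell}}$, and $\langle\widetilde{H}_\mu,e_{n-d}h_d\rangle=e_{n-d}[B_\mu]$ (a consequence of~\eqref{eq:lem_e_h_Delta} with $d$ replaced by $n-d$ and of the normalization $\langle\widetilde{H}_\mu,h_n\rangle=1$, which itself follows from~\eqref{eq:e_h_expansion}, \eqref{eq:def_dmunu} and~\eqref{eq:rel_cmunu_dmunu}), one obtains
\[
\frac{[n-\ell]_t}{[n]_t}\langle\Delta_{h_p}\Delta_{e_{n-\ell}}\omega(p_n),e_{n-d}h_d\rangle = [n]_q\,[n-\ell]_t\,M\sum_{\mu\vdash n}\frac{\Pi_\mu\,h_p[B_\mu]\,e_{n-\ell}[B_\mu]\,e_{n-d}[B_\mu]}{w_\mu}.
\]
On the left, expanding $\nabla\omega(p_{n-\ell})$ by~\eqref{eq:p_expansion}, applying $\mathbf{\Pi}^{-1}$, evaluating at $X=MB_\gamma$, applying reciprocity~\eqref{eq:Macdonald_reciprocity}, interchanging the two sums, and using~\eqref{eq:e_h_expansion} together with the diagonal action of $\Delta_{e_p}$ to absorb the factors $e_\ell[B_\gamma]$ and $e_p[B_\gamma]$, one rewrites the right-hand side of the previous display in the form $[n]_q[n-\ell]_t\,M\sum_{\mu\vdash n-\ell}\frac{T_\mu\Pi_\mu}{w_\mu}\,\Delta_{e_p}\big(h_\ell[\tfrac{X}{M}]\,e_{n+p-d-\ell}[\tfrac{X}{M}]\big)\big|_{X=MB_\mu}$. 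Hence the theorem is equivalent to the plethystic identity
\[
\sum_{\mu\vdash n-\ell}\frac{T_\mu\Pi_\mu}{w_\mu}\,\Delta_{e_p}\!\left(h_\ell\!\left[\frac{X}{M}\right]e_{n+p-d-\ell}\!\left[\frac{X}{M}\right]\right)\!\bigg|_{X=MB_\mu} = \sum_{\mu\vdash n}\frac{\Pi_\mu\,h_p[B_\mu]\,e_{n-\ell}[B_\mu]\,e_{n-d}[B_\mu]}{w_\mu}.
\]

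The main obstacle is this last identity. It is the exact analogue of the one proved in \cite{DAdderio-Iraci-VandenWyngaerd-GenDeltaSchroeder} en route to Theorem~\ref{thm:recoFnkpdl}: there the weight on the left is $\frac{T_\mu B_\mu\Pi_\mu}{w_\mu}$ and the factor $e_{n-\ell}[B_\mu]$ on the right is replaced by $B_\mu\,e_{n-\ell-1}[B_\mu-1]$, the discrepancy recording precisely the passage from $\sum_k E_{n-\ell,k}=e_{n-\ell}$ to $\sum_k\frac{[n]_q}{[k]_q}E_{n-\ell,k}=\frac{[n]_q}{[n-\ell]_q}\omega(p_{n-\ell})$. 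I would prove it along the same lines: expand $e_\ell[B_\gamma]e_p[B_\gamma]$ by iterated use of the Pieri rule~\eqref{eq:def_dmunu}, pass from the coefficients $d^{(k)}_{\mu\nu}$ to the $c^{(k)}_{\mu\nu}$ via~\eqref{eq:rel_cmunu_dmunu}, use the summation formula~\eqref{eq:summation} and reciprocity~\eqref{eq:Macdonald_reciprocity} to interchange the two partitions, and match the two sides term by term; the only genuine work is the bookkeeping of the $q,t$-binomial coefficients, and conceptually the computation runs exactly as in the $F$-case. As a consistency check, for $p=d=0$ and $\ell=n-1$ the identity collapses, after removing the overall factor $M$, to $\langle\nabla e_n,h_n\rangle=1$, and the theorem itself then reads $S_{n,1;0}^{(0,n-1)}=[n]_q$, which is immediate from the definitions.
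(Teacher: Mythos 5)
Your proposal is correct and follows essentially the same route as the paper's proof: the same collapse of the $k$-sum via \eqref{eq:lemnablaEnk} and \eqref{eq:pn_Enk}, the same expansion through \eqref{eq:p_expansion}, and the same remaining toolkit (Pieri coefficients \eqref{eq:def_dmunu}--\eqref{eq:rel_cmunu_dmunu}, Macdonald reciprocity \eqref{eq:Macdonald_reciprocity}, \eqref{eq:e_h_expansion} and the summation formula \eqref{eq:summation}). The only difference is organizational: you pre-apply reciprocity and package the tail of the computation as a single ``equivalent plethystic identity'' to be verified, whereas the paper simply runs the chain of equalities directly (and, contrary to your remark, no $q,t$-binomial bookkeeping is actually needed there --- the identity \eqref{eq:summation} already absorbs the sum $\sum_{\nu\subset_\ell\alpha}c_{\alpha\nu}^{(\ell)}T_\nu$ into $e_{n-\ell}[B_\alpha]$).
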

\begin{proof}
	We have
	\begin{align*}
		\sum_{k=1}^{n-\ell}S_{n,k;p}^{(d,\ell)} & =\sum_{k=1}^{n-\ell}\frac{[n]_q}{[k]_q}	F_{n,k;p}^{(d,\ell)}\\
		\text{(using \eqref{eq:lemnablaEnk})}& =\sum_{k=1}^{n-\ell}\frac{[n]_q}{[k]_q}\sum_{\gamma\vdash n+p-d}\frac{\Pi_\gamma}{w_\gamma}\left.(\mathbf{\Pi}^{-1}\nabla E_{n-\ell,k})\right|_{X=MB_\gamma} e_\ell[B_\gamma]e_p[B_\gamma] \\
		\text{(using \eqref{eq:pn_Enk})}& = \frac{[n]_q}{[n-\ell]_q}\sum_{\gamma\vdash n+p-d}\frac{\Pi_\gamma}{w_\gamma}\left.(\mathbf{\Pi}^{-1}\nabla \omega(p_{n-\ell}))\right|_{X=MB_\gamma} e_\ell[B_\gamma]e_p[B_\gamma] \\
		\text{(using \eqref{eq:p_expansion})}& = [n]_q[n-\ell]_t\sum_{\gamma\vdash n+p-d}\sum_{\nu\vdash n-\ell}T_\nu M\frac{\widetilde{H}_\nu[MB_\gamma]}{w_\nu} \frac{\Pi_\gamma}{w_\gamma} e_\ell[B_\gamma]e_p[B_\gamma] \\
		\text{(using \eqref{eq:def_dmunu})}& = [n]_q[n-\ell]_t\sum_{\gamma\vdash n+p-d}\sum_{\nu\vdash n-\ell}T_\nu M\sum_{\alpha\supset_\ell \nu} d_{\alpha \nu}^{(\ell)}\frac{\widetilde{H}_\alpha[MB_\gamma]}{w_\nu} \frac{\Pi_\gamma}{w_\gamma} e_p[B_\gamma] \\
		\text{(using \eqref{eq:Macdonald_reciprocity})}& = [n]_q[n-\ell]_t\sum_{\alpha\vdash n}M\frac{\Pi_\alpha}{w_\alpha}\sum_{\gamma\vdash n+p-d} e_p[B_\gamma] \frac{\widetilde{H}_\gamma[MB_\alpha]}{w_\gamma}   \sum_{\nu\subset_\ell \alpha}c_{\alpha \nu}^{(\ell)}  T_\nu\\
		\text{(using \eqref{eq:e_h_expansion})}& = [n]_q[n-\ell]_t\sum_{\alpha\vdash n}M\frac{\Pi_\alpha}{w_\alpha}h_p[B_\alpha]e_{n-d}[B_\alpha]   \sum_{\nu\subset_\ell \alpha}c_{\alpha \nu}^{(\ell)}  T_\nu \\
		\text{(using \eqref{eq:summation})}& = [n]_q[n-\ell]_t\sum_{\alpha\vdash n}M\frac{\Pi_\alpha}{w_\alpha}h_p[B_\alpha]e_{n-d}[B_\alpha]  e_{n-\ell}[B_\alpha] \\
		\text{(using \eqref{eq:p_expansion})}& = \frac{[n-\ell]_t}{[n]_t}\<\Delta_{h_p}\Delta_{e_{n-\ell}}\omega (p_{n}),e_{n-d}h_{d}\> .
	\end{align*}
\end{proof}

\subsection{An interesting identity}

We start by proving the following theorem of symmetric functions. 
\begin{theorem} \label{thm:main_thm}
	Given $n\in \mathbb{N}$, $n\geq 1$ and $\lambda\vdash n$
	\begin{equation} \label{eq:main_thm}
	\sum_{s= 0}^{n-1}(-t)^s\Delta_{e_{n-s-1}}'s_{\lambda}= \left\{\begin{array}{cl} {\nabla e_n}_{\big{|}_{t=0}} \cdot (-t)^{k-1} & \text{ for }\lambda=(k,1^{n-k})\\
	0 & \text{ otherwise}  \end{array}\right.\,\, . 
	\end{equation}
\end{theorem}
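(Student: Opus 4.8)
The plan is to pass to the modified Macdonald basis $\{\widetilde H_\mu\}_{\mu\vdash n}$, on which the operator $\Theta \coloneqq \sum_{s=0}^{n-1}(-t)^s\Delta'_{e_{n-s-1}}$ acts diagonally. Reindexing by $j=n-1-s$ and using $\Delta'_{e_j}\widetilde H_\mu=e_j[B_\mu-1]\widetilde H_\mu$ together with $\sum_{j\ge 0}e_j[A]z^j=\prod_i(1+za_i)$, applied to the $(n-1)$-letter plethystic alphabet $B_\mu-1=\{q^{a'_\mu(c)}t^{l'_\mu(c)}\mid c\in\mu,\ c\ne(1,1)\}$ (the $-1$ deletes the corner cell, whose contribution to $B_\mu$ is $1$), and noting $e_j[B_\mu-1]=0$ for $j\ge n$, one gets
\[
\Theta\widetilde H_\mu=(-t)^{n-1}\prod_{\substack{c\in\mu\\ c\ne(1,1)}}\bigl(1-q^{a'_\mu(c)}t^{\,l'_\mu(c)-1}\bigr)\,\widetilde H_\mu .
\]
The crucial point is that if $\mu$ has at least two rows, the cell just above the corner has co-arm $0$ and co-leg $1$, contributing a factor $1-t^{0}=0$; hence $\Theta\widetilde H_\mu=0$ for every $\mu\ne(n)$. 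For $\mu=(n)$ the surviving cells have co-leg $0$ and co-arms $1,\dots,n-1$, so the eigenvalue is $C\coloneqq(-t)^{n-1}\prod_{a=1}^{n-1}(1-q^{a}/t)=\prod_{a=1}^{n-1}(q^{a}-t)$, which by \eqref{eq:obvious} equals $w_{(n)}\big/\bigl((1-t)\prod_{i=1}^{n}(1-q^{i})\bigr)$. Therefore $\Theta$ is $C$ times the projection onto the line spanned by $\widetilde H_{(n)}$: if $s_\lambda=\sum_\mu c_\mu(\lambda)\widetilde H_\mu$, then $\Theta s_\lambda=C\,c_{(n)}(\lambda)\,\widetilde H_{(n)}$.

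It remains to compute the coefficient $c_{(n)}(\lambda)$. By the orthogonality \eqref{eq:H_orthogonality}, the basis dual to $\{\widetilde H_\mu\}$ for $\langle\cdot,\cdot\rangle_*$ is $\{\widetilde H_\mu/w_\mu\}$, so $c_{(n)}(\lambda)=\langle s_\lambda,\widetilde H_{(n)}\rangle_*/w_{(n)}$. By \eqref{eq:starprod} and the self-adjointness of $\phi$ and $\omega$ for the Hall product (with $\omega\phi=\phi\omega$) this equals $\langle s_{\lambda'},\widetilde H_{(n)}[MX]\rangle/w_{(n)}$, and \eqref{eq:Hn} together with $M/(1-q)=1-t$ gives $\widetilde H_{(n)}[MX]=h_n[(1-t)X]\prod_{i=1}^{n}(1-q^i)$. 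Expanding $h_n[(1-t)X]=\sum_{i=0}^{n}(-t)^{i}h_{n-i}[X]e_i[X]$ (from \eqref{eq:e_h_sum_alphabets} and $h_i[-Z]=(-1)^{i}e_i[Z]$) and evaluating $\langle s_{\lambda'},h_{n-i}e_i\rangle$ by Pieri — $h_{n-i}e_i=h_{n-i}s_{(1^{i})}$ expands into $s_\mu$'s with $\mu/(1^{i})$ a horizontal strip, and all such $\mu$ are hooks — one finds the pairing vanishes unless $\lambda'$ (equivalently $\lambda$) is a hook, and for $\lambda'=(a,1^{b})$ it equals $(-t)^{b}+(-t)^{b+1}=(-t)^{b}(1-t)$, the two terms coming from $i=b$ and $i=b+1$. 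Since $\lambda=(k,1^{n-k})$ gives $\lambda'=(n-k+1,1^{k-1})$, this yields $c_{(n)}(\lambda)=\prod_{i=1}^{n}(1-q^i)\,(-t)^{k-1}(1-t)/w_{(n)}$ for hooks and $0$ otherwise.

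Finally, multiplying $c_{(n)}(\lambda)$ by $C$ cancels $\prod_{i=1}^{n}(1-q^i)$, $(1-t)$ and $w_{(n)}$, leaving $\Theta s_\lambda=(-t)^{k-1}\widetilde H_{(n)}$ when $\lambda=(k,1^{n-k})$ and $\Theta s_\lambda=0$ otherwise; the identity \eqref{eq:deltaq0} then identifies $\widetilde H_{(n)}[X;q,t]$ with ${\nabla e_n}_{\big{|}_{t=0}}$, which is precisely \eqref{eq:main_thm}. I expect the only step requiring genuine insight to be the vanishing of $\Theta$ on all $\widetilde H_\mu$ with $\mu\ne(n)$ — after that everything is forced — whereas the $q$-binomial bookkeeping producing $C$ and the Pieri evaluation of $\langle s_{\lambda'},h_{n-i}e_i\rangle$ (only $i\in\{b,b+1\}$ contribute) are routine.
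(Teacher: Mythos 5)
Your proof is correct and follows essentially the same route as the paper: diagonalize the operator on the Macdonald basis, observe that the alternating sum of $e_j[B_\mu-1]$'s telescopes into a product that vanishes for every $\mu\neq(n)$ (the paper packages this as $e_n[B_\mu-t]=0$, you as the factor from the cell of co-arm $0$ and co-leg $1$), and then reduce the coefficient of $\widetilde{H}_{(n)}$ in $s_\lambda$ to the hook evaluation $s_{\lambda'}[1-t]$. The only cosmetic differences are that the paper first multiplies by $(1-t)$ to work with $\Delta_{e_{n-s}}$ instead of $\Delta'_{e_{n-s-1}}$, and cites the identity $s_{\mu}[1-u]=(-u)^r(1-u)$ for hooks rather than re-deriving it via Pieri as you do.
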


In order to prove Theorem~\ref{thm:main_thm}, we need the following lemma.

\begin{lemma}
	Given $n\in \mathbb{N}$, $n\geq 1$ and $\mu\vdash n$
	\begin{equation} \label{eq:lemma}
	\sum_{s= 0}^{n}(-t)^s e_{n-s}[B_{\mu}]  = \left\{ \begin{array}{cl}
	\prod_{i=0}^{n-1}(q^i-t)&  \text{ if }\mu=(n)\\
	0 & \text{ otherwise}
	\end{array}\right.\,\, .
	\end{equation}	
\end{lemma}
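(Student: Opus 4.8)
The plan is to evaluate the sum $\sum_{s=0}^{n}(-t)^s e_{n-s}[B_\mu]$ by recognizing it as a generating-function-type identity and unfolding the definition of $B_\mu$. Recall $B_\mu = \sum_{c\in\mu} q^{a'_\mu(c)}t^{l'_\mu(c)}$, a sum of $n$ monomials (one per cell). The key observation is the classical plethystic identity $\sum_{j\ge 0}(-t)^j e_{n-j}[Z] = $ (coefficient extraction from) $\prod$ over the ``letters'' of $Z$: more precisely, writing $B_\mu = z_1 + z_2 + \cdots + z_n$ with $z_c = q^{a'_\mu(c)}t^{l'_\mu(c)}$, one has $e_{n-s}[B_\mu] = e_{n-s}(z_1,\dots,z_n)$ as an ordinary elementary symmetric polynomial, and hence
\[
\sum_{s=0}^{n}(-t)^s e_{n-s}[B_\mu] = \sum_{s=0}^n (-t)^s e_{n-s}(z_1,\dots,z_n) = \prod_{c\in\mu}(z_c - t) = \prod_{c\in\mu}\left(q^{a'_\mu(c)}t^{l'_\mu(c)} - t\right),
\]
using $\prod_{i=1}^n (z_i + y) = \sum_{s=0}^n e_{n-s}(z_1,\dots,z_n) y^{s}$ with $y = -t$.

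Once this product formula is in hand, the result reduces to a case analysis on $\mu$. If $\mu$ has any cell $c$ with $a'_\mu(c)=0$ and $l'_\mu(c)=0$ — that is, the corner cell $(1,1)$, which every partition has — one gets the factor $q^0 t^0 - t = 1 - t$, which is not obviously zero, so that alone doesn't help. The right dichotomy comes from cells in the first column: a cell $c$ in the first column has $a'_\mu(c) = 0$, so it contributes a factor $t^{l'_\mu(c)} - t$, and this vanishes exactly when $l'_\mu(c) = 1$, i.e. for the cell $(2,1)$. Thus if $\mu$ has at least two parts (so $(2,1)\in\mu$), the product contains the factor $t^1 - t = 0$ and the whole sum vanishes. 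If $\mu = (n)$ is a single row, then every cell lies in the first row, so $l'_\mu(c) = 0$ and the cell in column $i$ (for $i=1,\dots,n$) contributes $q^{i-1} - t$; the product is $\prod_{i=0}^{n-1}(q^i - t)$, matching the claimed value. This covers both cases and completes the proof.

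The main (and essentially only) obstacle is being careful about the plethystic bookkeeping: one must justify that $e_k[B_\mu]$, where $e_k$ is plethystically evaluated at the polynomial $B_\mu$, coincides with the ordinary elementary symmetric polynomial $e_k$ evaluated at the $n$ monomials $q^{a'_\mu(c)}t^{l'_\mu(c)}$. This is immediate from the definition of plethystic substitution into a polynomial with positive monomial expansion (each monomial is treated as an independent variable), but it is worth stating explicitly since the monomials $z_c$ need not be distinct — that is harmless, as the identity $\prod_i(z_i + y) = \sum_s e_{n-s}(z_1,\dots,z_n)y^s$ is a polynomial identity valid regardless of coincidences among the $z_i$. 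After that, the case analysis is completely routine.
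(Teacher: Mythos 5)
Your proposal is correct and follows essentially the same route as the paper: the paper also collapses the sum via the addition formula to $e_n[B_\mu-t]$, which is exactly your product $\prod_{c\in\mu}\bigl(q^{a_\mu'(c)}t^{l_\mu'(c)}-t\bigr)$, and then observes that the cell with co-arm $0$ and co-leg $1$ (present unless $\mu=(n)$) kills the expression. The only cosmetic difference is that the paper phrases the vanishing as a cancellation of the monomial $t$ inside the plethystic alphabet (leaving $n-1$ positive monomials, so $e_n=0$) rather than as a zero linear factor in the product.
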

\begin{proof}
	Observe that
	\begin{align*}
		\sum_{s= 0}^{n}(-t)^s e_{n-s}[B_\mu] & =\sum_{s= 0}^{n}(-1)^s h_s[t]e_{n-s}[B_\mu] \\
		\text{(using \eqref{eq:minusepsilon})}& =\sum_{s= 0}^{n} e_s[-t]  e_{n-s}[B_\mu]\\
		\text{(using \eqref{eq:e_h_sum_alphabets})}& = e_n[B_\mu-t]\, .
	\end{align*}
	Now if $(0,1)\in \mu$, then $B_\mu-t$ has $n-1$ positive monomial, so that $e_n[B_\mu-t]=0$. The only shape for which $(0,1)\notin \mu$ is $\mu=(n)$, for which 
	\begin{equation}
	\sum_{s= 0}^{n}(-t)^s e_{n-s}[B_{(n)}]=\sum_{s= 0}^{n}(-t)^s e_{n-s}[[n]_q]=\prod_{i=0}^{n-1}(q^i-t),
	\end{equation}
	where we used \eqref{eq:Hn}. 
\end{proof}
We are now ready to prove Theorem~\ref{thm:main_thm}.
\begin{proof}[Proof of Theorem~\ref{thm:main_thm}]
	First of all, using \eqref{eq:deltaprime}, it is easy to see that
	\begin{equation} 
	\sum_{s= 0}^{n}(-t)^s\Delta_{e_{n-s}}s_{\lambda} = (1-t)\sum_{s= 0}^{n-1}(-t)^s\Delta_{e_{n-s-1}}'s_{\lambda}.
	\end{equation}
	Now, using \eqref{eq:H_orthogonality}, we have
	\begin{align*}
		\sum_{s= 0}^{n}(-t)^s\Delta_{e_{n-s}}s_{\lambda} & = \sum_{s= 0}^{n}(-t)^s\Delta_{e_{n-s}}\sum_{\mu\vdash n}\langle s_{\lambda} , \widetilde{H}_{\mu}[X]\rangle_*\frac{\widetilde{H}_{\mu}[X]}{w_{\mu}}\\
		& = \sum_{\mu\vdash n}\langle s_{\lambda} , \widetilde{H}_{\mu}[X]\rangle_* \sum_{s= 0}^{n}(-t)^s e_{n-s}[B_\mu]\frac{\widetilde{H}_{\mu}[X]}{w_{\mu}}\\
		\text{(using \eqref{eq:lemma})}	& =  \langle s_{\lambda} , \widetilde{H}_{(n)}[X]\rangle_* \cdot  \prod_{i=0}^{n-1}(q^i-t)\frac{\widetilde{H}_{(n)}[X]}{w_{(n)}}\\
		\text{(using \eqref{eq:Hn} and \eqref{eq:obvious})}& = \langle s_{\lambda} , h_n\left[\frac{X}{1-q}\right] \prod_{i=1}^{n}(1-q^i) \rangle_* \cdot  h_n\left[\frac{X}{1-q}\right] \\
		\text{(using \eqref{eq:Cauchy_identities})}& = \langle s_{\lambda} , \sum_{\mu\vdash n} s_{\mu}[1-t]s_{\mu}\left[\frac{X}{M}\right]  \rangle_* \cdot   h_n\left[\frac{X}{1-q}\right] \prod_{i=1}^{n}(1-q^i)\\
		\text{(using \eqref{eq:starprod})}& = \sum_{\mu\vdash n} \langle s_{\lambda'} ,  s_{\mu}  \rangle  s_{\mu}[1-t]h_n\left[\frac{X}{1-q}\right] \prod_{i=1}^{n}(1-q^i)\\
		\text{(using \eqref{eq:Hn})}& = s_{\lambda'} [1-t]\widetilde{H}_{(n)}[X;q,0]\\
		\text{(using \eqref{eq:deltaq0})}& = s_{\lambda'} [1-t]{\nabla e_n}_{\big{|}_{t=0}}.
	\end{align*}
	
	Now we need the following well-known identity (see \cite{Garsia-Hicks-Stout-2011}*{Lemma~2.1}):
	for all $\mu\vdash n$ 
	\begin{equation} 
	s_{\mu}[1-u]=\left\{\begin{array}{ll}
	(-u)^r(1-u) & \text{ if }\mu=(n-r,1^r)\text{ for some }r\in \{0,1,2,\dots,n-1\},\\
	0 & \text{ otherwise}.
	\end{array}\right.
	\end{equation}
	Applying this one, we get
	\begin{align*}
		(1-t)\sum_{s= 0}^{n-1}(-t)^s\Delta_{e_{n-s-1}}'s_{\lambda} & =
		\sum_{s= 0}^{n}(-t)^s\Delta_{e_{n-s}}s_{\lambda}\\
		& = s_{\lambda'} [1-t]{\nabla e_n}_{\big{|}_{t=0}}=\left\{\begin{array}{ll}
			(-t)^{k-1}(1-t) & {\nabla e_n}_{\big{|}_{t=0}} \text{ if }\lambda=(k,1^{n-k})\\
			0 & \text{ otherwise}
		\end{array}\right. ,
	\end{align*}
	which is what we wanted to prove.
\end{proof}

\subsection{Some consequences}

We deduce some consequences of Theorem~\eqref{thm:main_thm}.

\begin{corollary} 
	Given $m,n\in \mathbb{N}$, $m\geq 0$, $n\geq 1$ and $\lambda\vdash n$
	\begin{equation} \label{eq:main_cor}
	\sum_{s= 0}^{n-1}(-t)^s\Delta_{h_m}\Delta_{e_{n-s-1}}'s_{\lambda}= \left\{\begin{array}{cl} \qbinom{m+n-1}{m}_q {\nabla e_n}_{\big{|}_{t=0}} \cdot (-t)^{k-1} & \text{ for }\lambda=(k,1^{n-k})\\
	0 & \text{ otherwise}  \end{array}\right.\,\, . 
	\end{equation}
\end{corollary}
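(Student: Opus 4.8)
The plan is to obtain this as an immediate consequence of Theorem~\ref{thm:main_thm} by applying the operator $\Delta_{h_m}$ to both sides. The key point is that all the operators $\Delta_f$ and $\Delta_f'$ act diagonally on the modified Macdonald basis $\{\widetilde{H}_\mu\}$ (with eigenvalues $f[B_\mu]$ and $f[B_\mu-1]$ respectively), hence they are linear and pairwise commuting; in particular $\Delta_{h_m}$ commutes with each $\Delta_{e_{n-s-1}}'$, so that
\[
\sum_{s=0}^{n-1}(-t)^s\Delta_{h_m}\Delta_{e_{n-s-1}}'s_\lambda = \Delta_{h_m}\left(\sum_{s=0}^{n-1}(-t)^s\Delta_{e_{n-s-1}}'s_\lambda\right).
\]

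Next I would invoke Theorem~\ref{thm:main_thm}: the symmetric function inside the parentheses is $0$ unless $\lambda=(k,1^{n-k})$ for some $k\in\{1,\dots,n\}$, in which case it equals $(-t)^{k-1}\,{\nabla e_n}_{\big{|}_{t=0}}$. Thus everything reduces to computing $\Delta_{h_m}\bigl({\nabla e_n}_{\big{|}_{t=0}}\bigr)$. For this I would use \eqref{eq:deltaq0}, which identifies ${\nabla e_n}_{\big{|}_{t=0}}$ with the single-row modified Macdonald polynomial $\widetilde{H}_{(n)}[X]$ (which does not depend on $t$). Since $\Delta_{h_m}\widetilde{H}_{(n)}=h_m[B_{(n)}]\,\widetilde{H}_{(n)}$ by definition, $B_{(n)}=[n]_q$ by \eqref{eq:obvious}, and $h_m[[n]_q]=\qbinom{m+n-1}{m}_q$ by \eqref{eq:h_q_binomial}, we get
\[
\Delta_{h_m}\bigl({\nabla e_n}_{\big{|}_{t=0}}\bigr) = \qbinom{m+n-1}{m}_q\,\widetilde{H}_{(n)}[X] = \qbinom{m+n-1}{m}_q\,{\nabla e_n}_{\big{|}_{t=0}}.
\]
Combining the three displays above yields the claimed identity.

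There is essentially no obstacle here, as the corollary is a direct consequence of Theorem~\ref{thm:main_thm}; the only steps requiring a moment of care are the commutation of $\Delta_{h_m}$ with the $\Delta_{e_{n-s-1}}'$ (justified by simultaneous diagonalization on the $\widetilde{H}_\mu$) and the recognition that ${\nabla e_n}_{\big{|}_{t=0}}=\widetilde{H}_{(n)}$ is itself a Macdonald polynomial, so that $\Delta_{h_m}$ merely rescales it by the scalar $h_m[[n]_q]=\qbinom{m+n-1}{m}_q$. Note in particular that for $m=0$ this recovers Theorem~\ref{thm:main_thm}, since $\qbinom{n-1}{0}_q=1$.
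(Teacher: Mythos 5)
Your proposal is correct and follows exactly the paper's (very brief) argument: apply $\Delta_{h_m}$ to \eqref{eq:main_thm}, use \eqref{eq:deltaq0} to recognize ${\nabla e_n}_{\big{|}_{t=0}}$ as the eigenfunction $\widetilde{H}_{(n)}$, and compute the eigenvalue $h_m[B_{(n)}]=h_m[[n]_q]=\qbinom{m+n-1}{m}_q$ via \eqref{eq:obvious} and \eqref{eq:h_q_binomial}. Your write-up just supplies the routine details (commutation of the Delta operators on the Macdonald basis) that the paper leaves implicit.
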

\begin{proof}
	The result follows easily by applying the operator $\Delta_{h_m}$ to \eqref{eq:main_thm}, and using \eqref{eq:deltaq0} and \eqref{eq:h_q_binomial}. 
\end{proof}

The following two theorems have a nice combinatorial interpretation in terms of the Delta conjectures, that we are going to explain in Section~\ref{sec:Invo}.

Specializing \eqref{eq:main_cor} to $\lambda = (1^n)$, we get the following theorem.
\begin{theorem} \label{thm:SFinvoDelta}
	Given $m,n\in \mathbb{N}$, $m\geq 0$ and $n\geq 1$, we have
	\begin{equation} 
	\sum_{s= 0}^{n-1}(-t)^s\Delta_{h_m}\Delta_{e_{n-s-1}}'e_n=  \qbinom{m+n-1}{m}_q {\nabla e_n}_{\big{|}_{t=0}} \,\, . 
	\end{equation}
\end{theorem}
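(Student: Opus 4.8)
The plan is to obtain this identity as the special case $\lambda=(1^n)$ of the Corollary stated in equation~\eqref{eq:main_cor}. First I would recall that $e_n=s_{(1^n)}$, so the left-hand side of the desired identity is literally $\sum_{s=0}^{n-1}(-t)^s\Delta_{h_m}\Delta_{e_{n-s-1}}'s_{(1^n)}$, i.e.\ the quantity on the left of \eqref{eq:main_cor} evaluated at $\lambda=(1^n)$.

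Next I would check which branch of \eqref{eq:main_cor} applies: the partition $(1^n)$ is of hook shape $(k,1^{n-k})$ precisely when $k=1$, since $(1,1^{n-1})=(1^n)$ and no larger value of $k$ produces the all-ones partition when $n\geq1$. Hence the first case of \eqref{eq:main_cor} applies with $k=1$, and since $(-t)^{k-1}=(-t)^0=1$, this gives
\[
\sum_{s= 0}^{n-1}(-t)^s\Delta_{h_m}\Delta_{e_{n-s-1}}'e_n = \qbinom{m+n-1}{m}_q {\nabla e_n}_{\big{|}_{t=0}} ,
\]
which is exactly the claimed identity.

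There is no genuine obstacle at this final step; the content lies entirely in the earlier results. For the record, the Corollary \eqref{eq:main_cor} is itself deduced from Theorem~\ref{thm:main_thm} by applying $\Delta_{h_m}$ to both sides of \eqref{eq:main_thm}: on the right-hand side one uses ${\nabla e_n}_{\big{|}_{t=0}}=\widetilde{H}_{(n)}[X;q,0]$ from \eqref{eq:deltaq0}, the fact that $\Delta_{h_m}$ acts on $\widetilde{H}_{(n)}$ by the scalar $h_m[B_{(n)}]=h_m[[n]_q]$, and the evaluation $h_m[[n]_q]=\qbinom{m+n-1}{m}_q$ from \eqref{eq:h_q_binomial}. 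In turn, Theorem~\ref{thm:main_thm} rests on the Lemma computing $\sum_{s=0}^n(-t)^se_{n-s}[B_\mu]=e_n[B_\mu-t]$ (which vanishes unless $\mu=(n)$) together with the hook-shape evaluation of $s_\mu[1-t]$. So once those are in hand, the present theorem is just the specialization $\lambda=(1^n)$.
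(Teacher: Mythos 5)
Your proof is correct and follows exactly the paper's route: the paper obtains this theorem by specializing \eqref{eq:main_cor} to $\lambda=(1^n)$, which is the hook $(k,1^{n-k})$ with $k=1$, so that $(-t)^{k-1}=1$. Your supplementary derivation of the corollary (applying $\Delta_{h_m}$ to \eqref{eq:main_thm} and using \eqref{eq:deltaq0} and \eqref{eq:h_q_binomial}) also matches the paper's argument.
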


The following theorem is also an easy consequence of \eqref{eq:main_cor}.
\begin{theorem} \label{thm:SFinvoDeltaSQ}
	Given $m,n\in \mathbb{N}$, $m\geq 0$ and $n\geq 1$, we have
	\begin{equation} 
	\sum_{s= 0}^{n-1}(-t)^s\frac{[n-s]_t}{[n]_t}\Delta_{h_m}\Delta_{e_{n-s}}\omega(p_n)= \qbinom{m+n-1}{m}_q {\nabla e_n}_{\big{|}_{t=0}} \,\, . 
	\end{equation}
\end{theorem}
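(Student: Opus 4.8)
The plan is to derive this from Theorem~\ref{thm:SFinvoDelta}. I would first establish the stronger identity (i.e.\ before applying the operator $\Delta_{h_m}$)
\[
\sum_{s=0}^{n-1}(-t)^s\,\frac{[n-s]_t}{[n]_t}\,\Delta_{e_{n-s}}\,\omega(p_n)\;=\;\sum_{s=0}^{n-1}(-t)^s\,\Delta_{e_{n-s-1}}'\,e_n\,,\qquad(\star)
\]
and then apply $\Delta_{h_m}$ to both sides of $(\star)$: the right-hand side becomes $\qbinom{m+n-1}{m}_q\,{\nabla e_n}_{\big{|}_{t=0}}$ by Theorem~\ref{thm:SFinvoDelta}, which is exactly the claim.

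To prove $(\star)$ I would expand both sides in the modified Macdonald basis $\{\widetilde H_\mu\}_{\mu\vdash n}$. Using \eqref{eq:p_expansion} and the definition of $\Delta_{e_{n-s}}$, the left-hand side of $(\star)$ equals $\sum_{\mu\vdash n}c_\mu\,\dfrac{M\Pi_\mu\widetilde H_\mu[X]}{w_\mu}$ with $c_\mu=[n]_q\sum_{s=0}^{n-1}(-t)^s[n-s]_t\,e_{n-s}[B_\mu]$; using \eqref{eq:en_expansion} and the definition of $\Delta_{e_{n-s-1}}'$, the right-hand side equals $\sum_{\mu\vdash n}c'_\mu\,\dfrac{M\Pi_\mu\widetilde H_\mu[X]}{w_\mu}$ with $c'_\mu=B_\mu\sum_{s=0}^{n-1}(-t)^s\,e_{n-s-1}[B_\mu-1]$. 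Since the $\widetilde H_\mu$ are linearly independent and $M,\Pi_\mu,w_\mu\neq 0$, it suffices to prove $c_\mu=c'_\mu$ for every $\mu\vdash n$.

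For $c_\mu$: writing $[n-s]_t=\frac{1-t^{n-s}}{1-t}$, extending the two resulting sums to run up to $s=n$ (the extra terms at $s=n$ cancel, since $(-t)^n=(-1)^nt^n$), and using \eqref{eq:e_h_sum_alphabets} and \eqref{eq:minusepsilon} exactly as in the proof of \eqref{eq:lemma}, one gets $c_\mu=[n]_q\,\frac{e_n[B_\mu-t]-t^ne_n[B_\mu-1]}{1-t}$. Now $e_n[B_\mu-t]=\sum_{s=0}^n(-t)^se_{n-s}[B_\mu]$ is given by \eqref{eq:lemma}: it vanishes unless $\mu=(n)$, where it equals $\prod_{i=0}^{n-1}(q^i-t)=(1-t)\prod_{i=1}^{n-1}(q^i-t)$; and by the same argument — the bottom-left cell of $\mu$ contributes the monomial $1$ to $B_\mu$, so $B_\mu-1$ is a sum of $n-1$ monomials — one has $e_n[B_\mu-1]=0$ for \emph{every} $\mu\vdash n$. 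Hence $c_\mu=0$ for $\mu\neq(n)$ and $c_{(n)}=[n]_q\prod_{i=1}^{n-1}(q^i-t)$. For $c'_\mu$: \eqref{eq:e_h_sum_alphabets} and \eqref{eq:minusepsilon} give $c'_\mu=B_\mu\,e_{n-1}[B_\mu-1-t]$, which vanishes unless $\mu=(n)$ (if $\mu$ has at least two rows, $B_\mu-1-t$ is a sum of $n-2$ monomials), and for $\mu=(n)$ equals $[n]_q\,e_{n-1}[(q+q^2+\cdots+q^{n-1})-t]=[n]_q\prod_{i=1}^{n-1}(q^i-t)$ by expanding a product of linear factors into elementary symmetric functions. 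Thus $c_\mu=c'_\mu$ for all $\mu\vdash n$, which proves $(\star)$ and hence the theorem.

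The computations above are elementary plethysm of the type in the proof of \eqref{eq:lemma}; the one point I would double-check carefully is the boundary-term bookkeeping at $s=n$, together with the fact that $e_n[B_\mu-1]$ vanishes identically in $\mu$ (not merely for $\mu\neq(n)$), which is what makes the $t^ne_n[B_\mu-1]$ term disappear. As an alternative route one can bypass $(\star)$ and argue directly, copying the proof of Theorem~\ref{thm:main_thm} with the expansion \eqref{eq:p_expansion} of $\omega(p_n)$ in place of the star-product expansion of $s_\lambda$: the same collapse to the single partition $\mu=(n)$ occurs, and \eqref{eq:h_q_binomial}, \eqref{eq:obvious}, \eqref{eq:Hn} and \eqref{eq:deltaq0} then reassemble the right-hand side.
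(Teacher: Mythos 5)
Your proof is correct, but it takes a genuinely different route from the paper's. The paper multiplies the left-hand side by $(1-t^n)$, invokes the auxiliary vanishing $\sum_{s=0}^{n}(-1)^s\Delta_{e_{n-s}}f=0$, telescopes via \eqref{eq:deltaprime} to pass to $\Delta'$-operators, and then expands $\omega(p_n)=(-1)^{n-1}\sum_{r}(-1)^r s_{(n-r,1^r)}$ by Murnaghan--Nakayama so as to apply the full hook-Schur corollary \eqref{eq:main_cor} term by term. You instead prove the cleaner intermediate identity
\[
\sum_{s=0}^{n-1}(-t)^s\,\frac{[n-s]_t}{[n]_t}\,\Delta_{e_{n-s}}\,\omega(p_n)\;=\;\sum_{s=0}^{n-1}(-t)^s\,\Delta_{e_{n-s-1}}'\,e_n
\]
by comparing coefficients in the Macdonald basis via \eqref{eq:p_expansion} and \eqref{eq:en_expansion}, reducing everything to the two plethystic evaluations $e_n[B_\mu-t]$ and $e_{n-1}[B_\mu-1-t]$, both of which collapse to the single partition $\mu=(n)$; Theorem~\ref{thm:SFinvoDelta} then finishes the job after applying $\Delta_{h_m}$ (legitimate, since all the delta operators are simultaneously diagonal on the $\widetilde H_\mu$). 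Your bookkeeping checks out: the boundary term at $s=n$ does cancel because $(-t)^n e_0[B_\mu]=(-1)^n t^n e_0[B_\mu]$, and $e_n[B_\mu-1]=0$ for every $\mu\vdash n$ since $B_\mu-1$ is a sum of $n-1$ monomials. Your approach buys two things: it avoids Murnaghan--Nakayama and the hook case of \eqref{eq:main_cor} entirely, needing only the $\lambda=(1^n)$ specialization; and the identity $(\star)$ is itself the exact symmetric-function counterpart of the combinatorial statement in Section~\ref{sec:Invo} that the two alternating sums over $X$ and $Y$ coincide, so it makes that parallel explicit. The paper's route, on the other hand, gets the more general statement \eqref{eq:main_cor} for all hooks as a byproduct. (Your closing alternative --- rerunning the proof of Theorem~\ref{thm:main_thm} with $\omega(p_n)$ in place of $s_\lambda$ --- would need extra care because of the $s$-dependent factor $[n-s]_t/[n]_t$, but your primary argument already handles this correctly and does not rely on it.)
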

\begin{proof}
	Observe that
	\begin{equation} \label{eq:observ}
	\sum_{s= 0}^{n}(-1)^s \Delta_{e_{n-s}}f = 0 \qquad \text{ for any }f\in \Lambda^{(n)}
	\end{equation}
	since
	\begin{align*}
		\sum_{s= 0}^{n}(-1)^s \Delta_{e_{n-s}}f & =\sum_{s= 0}^{n}(-1)^s \Delta_{e_{n-s}} \sum_{\mu\vdash n} \<f,\widetilde{H}_\mu\>_* \frac{\widetilde{H}_\mu}{w_\mu}\\
		& = \sum_{\mu\vdash n} \<f,\widetilde{H}_\mu\>_* \sum_{s= 0}^{n}(-1)^s  e_{n-s}[B_\mu]\frac{\widetilde{H}_\mu}{w_\mu}\\
		& = \sum_{\mu\vdash n} \<f,\widetilde{H}_\mu\>_* e_{n}[B_\mu-1]\frac{\widetilde{H}_\mu}{w_\mu}\\
		& = 0.
	\end{align*}
	So, multiplying by $(1-t^n)$, we get
	\begin{align*}
		(1-t^n)\sum_{s= 0}^{n-1}(-t)^s\frac{[n-s]_t}{[n]_t}\Delta_{h_m}\Delta_{e_{n-s}}\omega(p_n)
		& = \sum_{s= 0}^{n}(-t)^s(1-t^{n-s}) \Delta_{h_m}\Delta_{e_{n-s}}\omega(p_n)\\
		& = \sum_{s= 0}^{n}(-t)^s \Delta_{h_m}\Delta_{e_{n-s}}\omega(p_n)\\
		& \quad + (-t^n) \sum_{s= 0}^{n}(-1)^{s}\Delta_{h_m}\Delta_{e_{n-s}}\omega(p_n)\\
		\text{(using \eqref{eq:observ})}& = \sum_{s= 0}^{n}(-t)^s \Delta_{h_m}\Delta_{e_{n-s}}\omega(p_n)\\
		\text{(using \eqref{eq:deltaprime})}& = (1-t)\sum_{s= 0}^{n-1}(-t)^s \Delta_{h_m}\Delta_{e_{n-s-1}}'\omega(p_n)\\
		& = (1-t)\sum_{s= 0}^{n-1}(-t)^s \Delta_{h_m}\Delta_{e_{n-s-1}}'(-1)^{n-1}\sum_{r=0}^{n-1}(-1)^rs_{(n-r,1^r)}\\
		\text{(using \eqref{eq:main_cor})}& = (-1)^{n-1}(1-t) \sum_{r=0}^{n-1}(-1)^r  \qbinom{m+n-1}{m}_q {\nabla e_n}_{\big{|}_{t=0}} \cdot (-t)^{n-r-1}\\
		& = (1-t^n) \qbinom{m+n-1}{m}_q {\nabla e_n}_{\big{|}_{t=0}},
	\end{align*}
	where in the fifth equality we used Murnaghan-Nakayama rule.
\end{proof}

\section{The Delta square at $q=0$}

In this section we prove the Delta square conjecture (i.e. $m=0$) at $q=0$.
\begin{theorem}
	For $n,k\in \mathbb{N}$, $n>k\geq 0$,
	\begin{equation}
	\frac{[n-k]_t}{[n]_t} \left.\Delta_{e_{n-k}}\omega(p_n)\right|_{q=0} = \mathsf{PLSQ^E}_{\underline x,0,t}(0,n)^{\ast k}.
	\end{equation}
\end{theorem}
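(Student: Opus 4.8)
The plan is to prove this by reducing the Delta square conjecture at $q=0$ to the Delta conjecture at $q=0$, which was established in \cite{Garsia-Haglund-Remmel-Yoo-2017}. On the symmetric function side, I would first recall from that source (or from \eqref{eq:deltaprime} together with the combinatorial interpretation) the known formula $\left.\Delta_{e_{n-k-1}}'e_n\right|_{q=0}$ in terms of decorated labelled Dyck paths with no dinv, which at $q=0$ picks out only the paths whose area word is weakly decreasing within blocks in a way that forces $\dinv = 0$; concretely, at $q=0$ the only surviving paths are those where no two labels contribute a primary or secondary inversion. The key symmetric-function identity I would want is a clean expression for $\left.\frac{[n-k]_t}{[n]_t}\Delta_{e_{n-k}}\omega(p_n)\right|_{q=0}$ in terms of $\left.\Delta_{e_{n-k-1}}'e_n\right|_{q=0}$ and lower $k$; this should follow from $\omega(p_n) = \sum_{j=1}^n \frac{[n]_q}{[j]_q} E_{n,j}$ (equation \eqref{eq:pn_Enk}) combined with $e_n = \sum_j E_{n,j}$ and the fact that $\left.\Delta_{e_{n-k}}E_{n,j}\right|_{q=0}$ simplifies drastically since $B_\mu$ at $q=0$ becomes $[\ell(\mu)]_t$-like; at $q=0$ only the single row/column-type shapes survive, echoing the mechanism in \eqref{eq:deltaq0}.

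The combinatorial side is where I would spend most of the effort. First I would analyze $\mathsf{PLSQ^E}_{\underline x,0,t}(0,n)^{\ast k}$: setting $q=0$ kills all paths with positive $\dinv$, including any bonus dinv, so every negative letter $a_i(P) < 0$ in the area word must have $l_i(P) = 0$ — but for square paths we require at least one nonzero label starting from the base diagonal, so the base diagonal row carries a nonzero label and has $a_i = 0$ there. More importantly, $q=0$ forces no primary inversions ($a_i = a_j \Rightarrow l_i \geq l_j$ for $i<j$, hence equal $a$-values force weakly decreasing labels reading upward, but strict increase within columns then forces all equal-$a$ vertical steps to be in distinct columns and to have a rigid label pattern) and no secondary inversions. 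I would then set up a bijection $\phi$ between $q=0$ surviving square paths in $\mathsf{PLSQ^E}(0,n)^{\ast k}$ and pairs consisting of a $q=0$ surviving decorated partially labelled Dyck path of size $n$ together with a choice recording the shift, in a way that tracks the weight $t^{\area}$ and $x^P$. The natural map "rotate the square path at its breaking point" sends a square path of shift $s$ to a Dyck path, and one checks that the $q=0$ condition is preserved and that $\area$ transforms predictably; the factor $\frac{[n-k]_t}{[n]_t}$ on the symmetric-function side should exactly account for the $t$-enumeration of the possible shifts $0 \leq s$ compatible with a fixed underlying Dyck shape and decoration set.

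Concretely, the key steps in order: (1) prove the symmetric function identity $\left.\frac{[n-k]_t}{[n]_t}\Delta_{e_{n-k}}\omega(p_n)\right|_{q=0} = \sum_{j=0}^{?} (\text{something}) \cdot \left.\Delta_{e_{n-j-1}}'e_n\right|_{q=0}$ with the coefficient being a ratio of $t$-integers, using \eqref{eq:pn_Enk}, \eqref{eq:Enk}, and the $q=0$ specializations in \eqref{eq:obvious} and \eqref{eq:deltaq0}; (2) recall the Delta conjecture at $q=0$ from \cite{Garsia-Haglund-Remmel-Yoo-2017}, i.e. $\left.\Delta_{e_{n-k-1}}'e_n\right|_{q=0} = \left.\mathsf{PLD}_{\underline x,0,t}(0,n)^{\ast k}\right.$ (here partially labelled with no zero labels, i.e. genuinely labelled); (3) describe explicitly the $q=0$ surviving square paths and Dyck paths, showing the surviving square paths decompose uniquely as a Dyck path prefix-structure plus a shift with a controlled $t$-contribution; (4) build the weight-preserving bijection and sum over shifts to recover the ratio $\frac{[n-k]_t}{[n]_t}$, matching step (1). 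The main obstacle I expect is step (3)–(4): correctly characterizing which square paths survive $q=0$ (the interaction between the "nonzero label on the base diagonal" condition, the strictly-increasing-in-columns condition, and the vanishing of all dinv including bonus dinv is delicate), and then verifying that summing $t^{\area}$ over all valid shifts of a fixed rotated Dyck configuration produces exactly the $t$-integer ratio $\frac{[n-k]_t}{[n]_t}$ rather than some other rational function of $t$ — this is the crux where the $[n-k]_t/[n]_t$ normalization in the statement gets its combinatorial meaning, and it likely requires a careful grouping of square paths by their underlying "primitive" (breaking-point-free) piece.
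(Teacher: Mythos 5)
Your opening observation is the right one, but you then walk away from it: since $m=0$ there are no zero labels at all, so the bonus dinv forces \emph{every} letter of the area word of a $\dinv$-zero path to be non-negative. Hence the only elements of $\mathsf{PLSQ^E}(0,n)^{\ast k}$ surviving at $q=0$ are Dyck paths, and $\mathsf{PLSQ^E}_{\underline x,0,t}(0,n)^{\ast k}=\mathsf{PLD}_{\underline x,0,t}(0,n)^{\ast k}$ on the nose. Consequently the machinery you designate as the crux --- rotating at the breaking point, grouping square paths by a primitive piece, and $t$-enumerating the admissible shifts so as to produce $[n-k]_t/[n]_t$ --- is vacuous: the set of surviving paths of positive shift is empty, there is nothing to sum over, and no ratio of $t$-integers can be extracted from the combinatorics. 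This is where your plan would fail.

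The factor $[n-k]_t/[n]_t$ lives entirely on the symmetric function side, and the identity you leave indeterminate in your step (1) is not a sum over lower $k$ with unknown coefficients but the single clean statement $\left.\Delta_{e_{n-k-1}}'e_n\right|_{q=0}=\frac{[n-k]_t}{[n]_t}\left.\Delta_{e_{n-k}}\omega(p_n)\right|_{q=0}$. It follows term by term in the Macdonald expansion: by \eqref{eq:espec} and $B_\mu|_{q=0}=[\ell(\mu)]_t$ one has, for every $\mu\vdash n$,
\begin{equation*}
\left.e_{n-k-1}[B_\mu-1]\, B_\mu\right|_{q=0}=t^{\binom{n-k}{2}}\qbinom{\ell(\mu)-1}{n-k-1}_t[\ell(\mu)]_t=[n-k]_t\, t^{\binom{n-k}{2}}\qbinom{\ell(\mu)}{n-k}_t=[n-k]_t\left.e_{n-k}[B_\mu]\right|_{q=0},
\end{equation*}
and substituting this into the expansions \eqref{eq:en_expansion} and \eqref{eq:p_expansion} gives the identity at once; combining it with the Delta conjecture at $q=0$ from \cite{Garsia-Haglund-Remmel-Yoo-2017} finishes the proof. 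Your proposed detour through the $E_{n,j}$ and \eqref{eq:pn_Enk} is not needed and, as sketched, would leave you with undetermined coefficients to be matched against a combinatorial shift-sum that does not exist.
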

\begin{proof}
	Looking at the combinatorial side, we observe that setting $q=0$ leaves out only labelled square paths with dinv $0$: because of the bonus dinv, this means that we are left with the partially labelled Dyck paths of dinv $0$, i.e.
	\begin{equation}
	\mathsf{PLSQ^E}_{\underline x,0,t}(0,n)^{\ast k} = \mathsf{PLD}_{\underline x,0,t}(0,n)^{\ast k}.
	\end{equation}
	But the Delta conjecture at $q=0$ has been proved in \cite{Garsia-Haglund-Remmel-Yoo-2017}, so we already know that
	\begin{equation} \label{eq:deltaq0_orig}
	\left.\Delta_{e_{n-k-1}}'e_n\right|_{q=0}  = \mathsf{PLD}_{\underline x,0,t}(0,n)^{\ast k}.
	\end{equation}
	\begin{remark}
		We will give a new proof of \eqref{eq:deltaq0_orig} in the Appendix. Unlike the proof in \cite{Garsia-Haglund-Remmel-Yoo-2017} (or even the alternative proof appearing in \cite{Haglund-Rhoades-Shimozono-arxiv}), where, using the symmetry in $q$ and $t$, they work combinatorially with $\mathsf{PLD}_{\underline x,q,0}(0,n)^{\ast k}$, in our new proof we will work directly with $\mathsf{PLD}_{\underline x,0,t}(0,n)^{\ast k}$. Morever, the symmetric function side of the proof is completely new.
	\end{remark}
	
	Therefore, in order to prove our theorem, it is enough to show that
	\begin{equation}
	\left.\Delta_{e_{n-k-1}}'e_n\right|_{q=0}  = \frac{[n-k]_t}{[n]_t} \left.\Delta_{e_{n-k}}\omega(p_n)\right|_{q=0}.
	\end{equation}
	But observe that for any partition $\mu\vdash n$
	\begin{align*}
		\left. e_{n-k-1}[B_\mu-1]\cdot B_\mu \right|_{q=0} & = t^{\binom{n-k}{2}}\qbinom{\ell(\mu)-1}{n-k-1}_t[\ell(\mu)]_t\\
		& = t^{\binom{n-k}{2}}\qbinom{\ell(\mu)}{n-k}_t [n-k]_t\\
		& = [n-k]_t \left. e_{n-k}[B_\mu] \right|_{q=0} ,
	\end{align*}
	where we used the easy specialization (compare \eqref{eq:e_q_binomial})
	\begin{equation} \label{eq:espec}
	\left. e_{n-k-1}[B_\mu-1] \right|_{q=0} = t^{\binom{n-k}{2}} \qbinom{\ell(\mu)-1}{n-k-1}_t.
	\end{equation}
	So, using \eqref{eq:en_expansion} and \eqref{eq:p_expansion}, we get
	\begin{align*}
		\left.\Delta_{e_{n-k-1}}'e_n\right|_{q=0} & = \sum_{\mu\vdash n}e_{n-k-1}[B_\mu-1]B_\mu\left.\frac{M\Pi_\mu\widetilde{H}_\mu[X]}{w_\mu}\right|_{q=0} \\
		& = [n-k]_t\sum_{\mu\vdash n}e_{n-k}[B_\mu] \left.\frac{M\Pi_\mu\widetilde{H}_\mu[X]}{w_\mu}\right|_{q=0} \\
		&= \frac{[n-k]_t}{[n]_t} \left.\Delta_{e_{n-k}}\omega(p_n)\right|_{q=0},
	\end{align*}
	as we wanted.
\end{proof}

\section{The generalized Delta at $k=0$ and $t=0$}

In this section we prove the generalized Delta conjecture at $k=0$ and $t=0$.
\begin{proposition} \label{prop:t0}
	For $n\in \mathbb{N}$, $n >0$,
	\begin{equation}
	\left.\Delta_{h_m} \Delta_{e_{n-1}}'e_n\right|_{t=0} =\left.\Delta_{h_m} \nabla e_n\right|_{t=0} = \mathsf{PLD}_{\underline x,q,0}(m,n)^{\ast 0}.
	\end{equation}
\end{proposition}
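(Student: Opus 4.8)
The plan is to prove the identity by computing both sides explicitly at $t=0$, just as was done for the Delta square conjecture at $q=0$ in the previous section, but now with the roles of the two variables swapped. On the symmetric function side, I would first establish the analogue of the specialization \eqref{eq:deltaq0}: namely that $\left.\Delta_{h_m}\nabla e_n\right|_{t=0}$ collapses to a single Macdonald polynomial term. Using the expansion $\nabla e_n = \sum_{\mu\vdash n}T_\mu\frac{MB_\mu\Pi_\mu\widetilde H_\mu[X]}{w_\mu}$ from \eqref{eq:en_expansion}, and the fact that $T_\mu(q,0) = q^{n(\mu')}$ or similar vanishes unless $\mu=(1^n)$ (the partition for which the power of $t$ in $T_\mu$ is zero), one sees that only the shape $\mu=(1^n)$ survives. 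Then $\left.\Delta_{h_m}\nabla e_n\right|_{t=0} = h_m[B_{(1^n)}]\cdot(\text{something}) = h_m[[n]_q]\cdot\widetilde H_{(1^n)}[X;q,0]$, and by $\omega$-duality with \eqref{eq:Hn} and \eqref{eq:h_q_binomial} this equals $\qbinom{m+n-1}{m}_q e_n\left[\frac{X}{1-q}\right]\prod_{i=1}^n(1-q^i)$, or equivalently $\qbinom{m+n-1}{m}_q e_n[[\text{appropriate alphabet}]]$ — in any case a completely explicit symmetric function whose monomial/Schur expansion is transparent.

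Next I would handle the combinatorial side. Setting $t=0$ in $\mathsf{PLD}_{\underline x,q,0}(m,n)^{\ast 0}$ (note $k=0$, so there are no decorated rises) kills every path with positive area. Since the area is $\sum_{i}a_i(D)$ over all rows (no decorations), area $=0$ forces $a_i(D)=0$ for all $i$, i.e. the path hugs the main diagonal: it is the staircase-type path consisting of alternating blocks. Concretely, area zero Dyck paths of size $m+n$ are exactly those whose area word is a string of $0$'s and $1$'s... wait — more carefully, $a_i=0$ for all $i$ means each vertical step starts on the diagonal, so the path is $NE\,NE\,\cdots\,NE$, a single "staircase". I would then enumerate the labelled such paths: the columns each contain one vertical step (so labels are freely assignable subject to the column-strictly-increasing condition being vacuous for singleton columns, plus the constraint that $0$ does not appear in the first column and we use exactly $m$ zeros among $n+m$ steps), and compute $\sum q^{\dinv(D)}x^D$. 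Here $\dinv$ counts primary inversions (pairs $i<j$ with $l_i<l_j$, since all $a_i$ equal) — this is an inversion-type statistic on words. The generating function $\sum_{\text{words}} q^{\dinv} x^{\text{word}}$ over words with a prescribed multiset of nonzero labels and $m$ zeros should, after summing, reproduce $\qbinom{m+n-1}{m}_q$ times the appropriate $q$-deformed complete-homogeneous-type symmetric function, matching the left side.

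The main obstacle I expect is getting the combinatorial bookkeeping exactly right: carefully characterizing the area-zero partially labelled Dyck paths, correctly accounting for the constraint that $0$ is forbidden in the first column and that there are exactly $m$ zero labels, and then evaluating the resulting $q$-weighted sum over labellings so that the $\qbinom{m+n-1}{m}_q$ factor emerges cleanly and the $x$-dependence matches $e_n\left[\frac{X}{1-q}\right]$ (up to the normalizing product). Establishing that the dinv generating function over these labelled staircase paths is symmetric and equals the claimed symmetric function is the crux — one clean route is to recognize the labelled-path sum as a known $q$-analogue (an LLT polynomial for a horizontal-strip-type shape, which is well understood) and then identify it with $\qbinom{m+n-1}{m}_q e_n\left[\frac{X}{1-q}\right]\prod_{i=1}^n(1-q^i)$ via Cauchy-type identities as in the proof of Theorem~\ref{thm:main_thm}. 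Alternatively, since $k=0$, this proposition is nothing but the $t=0$ specialization of the generalized Delta conjecture's Shuffle-type case, and one may be able to cite or adapt the $t=0$ (equivalently, by $q,t$-symmetry of $\Delta_{h_m}\nabla e_n$, the $q=0$) result already in the literature for partially labelled Dyck paths — but since the point of the section is self-containedness, carrying out the direct computation above is the safer route.
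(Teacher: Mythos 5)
Your overall strategy is the same as the paper's: collapse $\left.\Delta_{h_m}\nabla e_n\right|_{t=0}$ to a single Macdonald polynomial term, expand it explicitly into monomials, identify the result with the word-inversion generating function for the area-zero labelled staircase paths, and account for the $m$ zero valleys by a $q$-binomial factor. The paper does exactly this, quoting MacMahon's theorem $\sum_{\lambda\vdash n}\qbinom{n}{\lambda_1,\dots,\lambda_{\ell(\lambda)}}_q m_\lambda = \sum_{w\in\mathbb{P}^n}q^{inv(w)}x^w$ for the identification and then inserting the $m$ zero valleys (anywhere except the bottom row) to produce $\qbinom{n+m-1}{m}_q$.

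There is, however, one concrete slip that would derail the computation as written: the shape surviving at $t=0$ is $(n)$, not $(1^n)$. Since $T_\mu=\prod_{c\in\mu}q^{a'_\mu(c)}t^{l'_\mu(c)}$, the power of $t$ vanishes exactly when every cell has co-leg $0$, i.e.\ when $\mu$ is a single row; indeed $T_{(n)}=q^{\binom{n}{2}}$ while $T_{(1^n)}=t^{\binom{n}{2}}$ dies at $t=0$. This matters twice. First, $B_{(n)}=[n]_q$ gives $h_m[B_{(n)}]=h_m[[n]_q]=\qbinom{m+n-1}{m}_q$ by \eqref{eq:h_q_binomial}, whereas $B_{(1^n)}=[n]_t$ would give $h_m[[n]_t]\big|_{t=0}=1$ and the $q$-binomial factor would never appear. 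Second, the surviving polynomial is $\widetilde{H}_{(n)}[X;q,0]=h_n\left[\frac{X}{1-q}\right]\prod_{i=1}^n(1-q^i)$ as in \eqref{eq:Hn}, whose monomial expansion via \eqref{eq:Cauchy_identities} and \eqref{eq:h_q_prspec} is $\sum_{\lambda\vdash n}\qbinom{n}{\lambda_1,\dots,\lambda_{\ell(\lambda)}}_q m_\lambda$; by contrast $\widetilde{H}_{(1^n)}[X;q,0]=h_n[X]$, and the expression $e_n\left[\frac{X}{1-q}\right]\prod_{i=1}^n(1-q^i)$ you propose is a genuinely different symmetric function from $h_n\left[\frac{X}{1-q}\right]\prod_{i=1}^n(1-q^i)$ (already for $n=2$ their Schur expansions are $qs_2+s_{11}$ versus $s_2+qs_{11}$), so it would not match the combinatorial side. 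Once the shape is corrected to $(n)$, the rest of your plan (staircase characterization, inversion statistic on the word of labels, zero-valley placement contributing $\qbinom{n+m-1}{m}_q$) goes through exactly as in the paper.
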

\begin{proof}
	Using \eqref{eq:deltaq0}, we have
	\begin{align*}
		\Delta_{h_m}{\nabla e_n}_{\big{|}_{t=0}} & =h_m[[n]_q]\widetilde{H}_{(n)}[X;q,0]\\
		& =\qbinom{n+m-1}{m}_q h_n\left[\frac{X}{1-q}\right] \prod_{i=1}^{n}(1-q^i)\\
		\text{(using \eqref{eq:Cauchy_identities})}& =\qbinom{n+m-1}{m}_q \sum_{\lambda\vdash n}\prod_{i=1}^{n}(1-q^i) h_{\lambda}\left[\frac{1}{1-q}\right] m_{\lambda}\left[X\right] \\
		\text{(using \eqref{eq:h_q_prspec})}& =\qbinom{n+m-1}{m}_q \sum_{\lambda\vdash n}\qbinom{n}{\lambda_1,\dots,\lambda_{\ell(\lambda)}}_q m_{\lambda}\left[X\right] .
	\end{align*}
	It is a well-known theorem of MacMahon (cf. \cite{Loehr-Book}*{Theorem~6.44}) that
	\begin{equation}
	\sum_{\lambda\vdash n}\qbinom{n}{\lambda_1,\dots,\lambda_{\ell(\lambda)}}_q m_{\lambda}\left[X\right] = \sum_{w\in \mathbb{P}^n}q^{inv(w)}x^w
	\end{equation}
	where $\mathbb{P}\coloneqq\{1,2,\dots\}$, $inv(w)$ is the number of inversions of the word $w\in \mathbb{P}^n$, and $x^w$ is defined as $x^w\coloneqq \prod_{i=1}^nx_i^{\text{number of $i$ in $w$}}$. Now at $t=0$, i.e. with area $0$, a labelled Dyck path of size $n$ reduces to a word in $\mathbb{P}^n$ (read top to bottom along the base diagonal), and its dinv is precisely the number of inversions of this word, so
	\begin{equation}
	\sum_{w\in \mathbb{P}^n}q^{inv(w)}x^w = \mathsf{PLD}_{\underline x,q,0}(0,n)^{\ast 0}.
	\end{equation}
	Now for each element of $\mathsf{PDL}(0,n)^{\ast 0}$ we can insert $m$ zero valleys in all possible ways, except in the lowest row, to get an element in $\mathsf{PDL}(m,n)^{\ast 0}$, and all the elements in $\mathsf{PDL}(m,n)^{\ast 0}$ are obtained in this way. Taking into account the contribution of the zero valleys to the dinv explains the factor $\qbinom{n+m-1}{m}_q$, so that
	\begin{equation}
	\qbinom{n+m-1}{m}_q\sum_{w\in \mathbb{P}^n}q^{inv(w)}x^w = \qbinom{n+m-1}{m}_q\mathsf{PLD}_{\underline x,q,0}(0,n)^{\ast 0}=\mathsf{PLD}_{\underline x,q,0}(m,n)^{\ast 0},
	\end{equation}
	completing the proof.
\end{proof}

\section{The Schr\"oder case}

The following definitions extend the ones in \cite{DAdderio-Iraci-VandenWyngaerd-GenDeltaSchroeder}*{Section~4} from Dyck paths to square paths (ending east).

\begin{definition}
	We define the set of \emph{valleys} of a square path $P \in \SQE(n)$.
	\begin{itemize}
		\item If $P$ starts with a north step
		\begin{align*}
			\Val(P) \coloneqq & \; \{2\leq i\leq n \mid a_i(P)\leq a_{i-1}(P)\},
		\end{align*}
		\item If $P$ starts with an east step 
		\begin{align*}
			\Val(P) \coloneqq & \; \{1\} \cup \{2\leq i\leq n \mid a_i(P)\leq a_{i-1}(P)\}.
		\end{align*}
	\end{itemize}
	These are exactly the indices of vertical steps that are directly preceded by a horizontal step.
\end{definition}

\begin{definition}
	The \emph{peaks} of a square path $P \in \SQE(n)$ are \[ \Peak(P) \coloneqq \{1 \leq i \leq n-1 \mid a_{i+1}(P) \leq a_i(P) \} \cup \{n\}, \] or the indices of vertical steps that are followed by a horizontal step.
\end{definition}

\begin{definition}\label{def: schroder_objects}
	Fix $p, n, \ell, d \in \mathbb{N}$, $n \geq 1$. For every square path $P \in \SQE(n+p)$ with $|\Rise(P)|\geq \ell$, $|\Peak(P)|\geq d$ and $|\Val(P)|\geq p$ choose three subsets of $\{1,\dots, n+p\}$:
	\begin{enumerate}[(i)]
		\item $\DRise(P) \subseteq \Rise(P)$ (see Definition \ref{def: rise}) such that $\vert \DRise(P) \vert = \ell$ and decorate the corresponding vertical steps with a $\ast$. 
		\item $\DPeak(P)\subseteq \Peak(P)$ such that $\vert \DPeak(P) \vert = d$ and decorate with a $\textcolor{blue}{\bullet}$ the points joining these vertical steps with the horizontal steps following them. We will call these \emph{decorated peaks}. 
		\item $\ZVal(P)\subseteq \Val(P)$ such that $\vert \ZVal \vert = p$ and $\DPeak(P) \cap \ZVal(P) = \emptyset$. Furthermore, if \[S\coloneqq \{1\leq i \leq n+p \mid a_i(P)= -s \},\] where $s$ is the shift of $P$; then $S\not \subseteq \ZVal(P)$. In other words, there exists at least one vertical step starting from the base diagonal that is not in $\ZVal(P)$.  Label the corresponding vertical steps with a zero. These steps will be called \emph{zero valleys}.
	\end{enumerate} 
	We denote the set of these paths by $\SQE(p,n)^{\ast \ell, \circ d}$. See Figure~\ref{fig:decsqe} for an example. 
\end{definition}

We define two statistics on $\SQE(p,n)^{\ast \ell, \circ d}$.

The definition of the \emph{area} of a path in $\SQE(p,n)^{\ast \ell, \circ d}$ is the same for a path in $\PLSQE(p,n)^{\ast \ell}$ (see Definition \ref{def:sqarea}).

\begin{definition}
	\label{def: dinv unlabelled SQ}
	For $P \in \SQE(p,n)^{\ast \ell, \circ d}$, and $1 \leq i<j \leq n+p$, we say that the pair $(i,j)$ is an inversion if 
	\begin{itemize}
		\item either $a_i(P) = a_j(P)$, $i \not \in \DPeak(P)$, and $j \not\in \ZVal(P)$ (\emph{primary inversion}),
		\item or $a_i(P) = a_j(P) + 1$, $j \not\in \DPeak(P)$, and  $i \not \in \ZVal(P)$ (\emph{secondary inversion}).
	\end{itemize}
	Then we define 
	\begin{align*}
		\dinv(P) & \coloneqq \# \{ 0\leq i < j \leq n+p \mid (i,j) \; \text{is an inversion}\} \\
		& \quad + \#\{0\leq i\leq n+p \mid a_i(D)<0\text{ and } i \not\in \ZVal(P) \}.
	\end{align*} 
	This second term is referred to as \emph{bonus dinv}.
\end{definition}

\begin{figure}[ht]
	\begin{tikzpicture}[scale = 0.8]
	\draw[step=1.0, gray!60, thin] (0,0) grid (8,8);
	
	\draw[gray!60, thin] (3,0) -- (8,5);
	
	\draw[blue!60, line width=1.6pt] (0,0) -- (0,1) -- (1,1) -- (2,1) -- (3,1) -- (4,1) -- (4,2) -- (5,2) -- (5,3) -- (5,4) -- (6,4) -- (6,5) -- (6,6) -- (6,7) -- (7,7) -- (7,8) -- (8,8);
	
	\node at (5.5,5.5) {$\ast$};
	
	\node at (4.5,1.5) {$0$};
	\draw (4.5,1.5) circle (.4cm); 
	\node at (6.5,4.5) {$0$};
	\draw (6.5,4.5) circle (.4cm); 
	
	\filldraw (4,1) circle (2pt);
	\filldraw[fill=blue!60] (6,7) circle (3 pt);
	\end{tikzpicture}\caption{Example of an element in $\SQE(2,6)^{\ast 1, \circ 1}$}
	\label{fig:decsqe}
\end{figure}

For example, the path in Figure~\ref{fig:decsqe} has dinv 7: 3 primary inversions, i.e. $(1,7)$, $(1,8)$ and $(2,3)$, 1 secondary inversion, i.e. $(1,6)$, and 3 bonus dinv, coming from the rows $3$, $4$ and $6$. 

\begin{remark} \label{rmk:dinv_reading}
	Let $P \in \PLSQE(p,n)$. We define its \emph{dinv reading word} as the sequence of labels read starting from the ones on the base diagonal $y=x-s$ (so that $s$ is the shift of $P$) going bottom to top, left to right; next the ones in the diagonal $y=x-s+1$ bottom to top, left to right; then the ones in the diagonal $y=x-s+2$ and so on. For example the path in Figure~\ref{fig:decsqe-labelled} has dinv reading word $01203465$.
	
	One can consider the paths in $\SQE(p,n)^{\ast \ell, \circ d}$ as partially labelled decorated square paths where the reading word is a shuffle of $p$ $0$'s, the string $1, \cdots, n-d$, and the string $n, \cdots n-d+1$. Indeed, given this restriction and the information about the position of the zero labels and considering the $d$ biggest labels to label the decorated peaks, the rest of the labelling is fixed. With regard to this labelling the Definitions \ref{def: dinv unlabelled SQ} and \ref{def: dinv SQ} of the dinv coincide.
	
	\begin{figure}[!ht]
		
		\begin{tikzpicture}[scale = 0.8]
		\draw[step=1.0, gray!60, thin] (0,0) grid (8,8);
		
		\draw[gray!60, thin] (3,0) -- (8,5);
		
		\draw[blue!60, line width=1.6pt] (0,0) -- (0,1) -- (1,1) -- (2,1) -- (3,1) -- (4,1) -- (4,2) -- (5,2) -- (5,3) -- (5,4) -- (6,4) -- (6,5) -- (6,6) -- (6,7) -- (7,7) -- (7,8) -- (8,8);
		
		\node at (5.5,5.5) {$\ast$};
		
		\node at (0.5,0.5) {$4$};
		\draw (0.5,0.5) circle (.4cm); 
		\node at (4.5,1.5) {$0$};
		\draw (4.5,1.5) circle (.4cm); 
		\node at (5.5,2.5) {$1$};
		\draw (5.5,2.5) circle (.4cm); 
		\node at (5.5,3.5) {$2$};
		\draw (5.5,3.5) circle (.4cm); 
		\node at (6.5,4.5) {$0$};
		\draw (6.5,4.5) circle (.4cm); 
		\node at (6.5,5.5) {$3$};
		\draw (6.5,5.5) circle (.4cm); 
		\node at (6.5,6.5) {$6$};
		\draw (6.5,6.5) circle (.4cm); 
		\node at (7.5,7.5) {$5$};
		\draw (7.5,7.5) circle (.4cm); 
		
		\filldraw (4,1) circle (2pt);
		\end{tikzpicture}
		
		\caption{Partially labelled square path corresponding to the example in Figure~\ref{fig:decsqe} .}	
		\label{fig:decsqe-labelled}
		
	\end{figure}
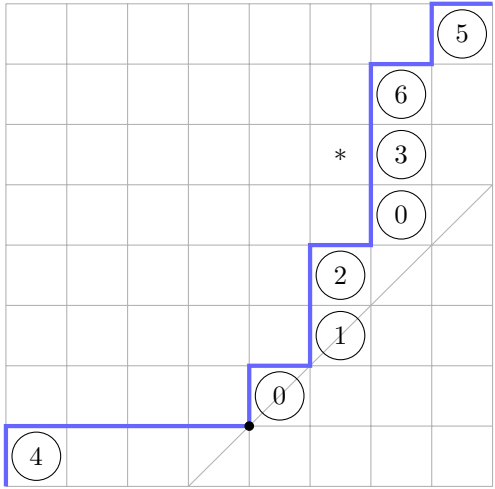
	
	For example, the path in Figure~\ref{fig:decsqe-labelled} is the partially labelled square path corresponding to the decorated square path in Figure~\ref{fig:decsqe}. Indeed it has dinv reading word $01203465$ which is a shuffle of two $0$'s and the strings $\{1,2,3,4,5\}$, $\{6\}$. Its dinv equals 7: 3 primary plus 1 secondary plus 3 bonus. 
\end{remark}

Define the subset 
\begin{equation}
\SQE(p, n \backslash k)^{\ast \ell, \circ d} \subseteq \SQE(p,n)^{\ast \ell, \circ d}
\end{equation} 
to consist of the paths $P \in \SQE(p,n)^{\ast \ell, \circ d}$ such that \[\# \{1 \leq i \leq n \mid a_i(P) \,\text{is minimum and} \, i \not \in \ZVal(D) \} = k, \]
and set 
\begin{equation}
\SQE_{q,t}(p, n \backslash k)^{\ast \ell, \circ d}\coloneqq \sum_{P \in \SQE(p, n \backslash k)^{\ast \ell, \circ d}} q^{\dinv(P)} t^{\area(P)} 
\end{equation}

Following \cite{DAdderio-Iraci-VandenWyngaerd-GenDeltaSchroeder}*{Section~4}, we denote by $\mathsf{DD}(p,n)^{\ast \ell, \circ d}$ the subset of $\SQE(p,n)^{\ast \ell, \circ d}$ consisting of the elements whose underlying path is a Dyck path (i.e. the minimum of the area word is $0$), and we set
\begin{equation}
\mathsf{DDd}(p,n\backslash k)^{\ast \ell, \circ d}:= \SQE(p, n \backslash k)^{\ast \ell, \circ d}\cap \mathsf{DD}(p,n)^{\ast \ell, \circ d},
\end{equation}
and
\begin{equation}
\mathsf{DDd}_{q,t}(p,n\backslash k)^{\ast \ell, \circ d}:= \sum_{D\in \mathsf{DDd}(p,n\backslash k)^{\ast \ell, \circ d}}q^{\dinv(D)}t^{\area(D)}.
\end{equation}

We recall here the main result from \cite{DAdderio-Iraci-VandenWyngaerd-GenDeltaSchroeder}.
\begin{theorem}[\cite{DAdderio-Iraci-VandenWyngaerd-GenDeltaSchroeder}*{Theorem~4.7}] \label{thm:Fnkp_Theorem}
	$\mathsf{DDd}_{q,t}(p,n\backslash k)^{\ast \ell, \circ d} = F^{(d,\ell)}_{n,k; p}$.
\end{theorem}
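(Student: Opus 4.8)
The statement is recorded as \cite{DAdderio-Iraci-VandenWyngaerd-GenDeltaSchroeder}*{Theorem~4.7}; were one to reprove it from scratch, the plan is to show that the combinatorial series $\DDd_{q,t}(p,n\backslash k)^{\ast \ell, \circ d}$ satisfies the \emph{same} recursion and the \emph{same} boundary conditions as $F^{(d,\ell)}_{n,k;p}$. Since Theorem~\ref{thm: F iterated recursion} exhibits a recursion that determines the whole family $F^{(d,\ell)}_{n,k;p}$ uniquely from its boundary values, matching both pieces of data forces equality. The recursion for $F^{(d,\ell)}_{n,k;p}$ is already in hand, so the work is entirely on the combinatorial side, and proceeds by induction on $n$.

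For the initial conditions one argues directly: $\DDd(p,0\backslash k)^{\ast\ell,\circ d}$ and, for $k=0$, $\DDd(p,n\backslash 0)^{\ast\ell,\circ d}$ are empty unless all the relevant parameters vanish, in which case they reduce to the empty path of weight $1$. For $k=n$ every nonzero-labelled vertical step has area-word value $0$, so the underlying path hugs the main diagonal: it has no rises (forcing $\ell=0$), every vertical step is a peak, its $\area$ is $0$, and its $\dinv$ is purely primary; counting the shuffles of the two increasing strings $1,\dots,n-d$ and $n,\dots,n-d+1$ together with the $p$ zero valleys laid along the diagonal, weighted by $\dinv$, yields after a short computation $e_{n-d}[[n]_q]\,h_p[[n]_q]=q^{\binom{n-d}{2}}\qbinom{n}{n-d}_q\qbinom{n+p-1}{p}_q$ by \eqref{eq:e_q_binomial} and \eqref{eq:h_q_binomial}, which matches $F^{(d,\ell)}_{n,n;p}$.

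For $1\le k<n$ the heart of the argument is a decomposition of $D\in\DDd(p,n\backslash k)^{\ast\ell,\circ d}$ into the portion occupying the bottom $k+j$ rows --- the $k$ non--zero-valley steps touching the main diagonal together with the $j$ zero valleys interleaved among them --- plus the few rows immediately above it, and the complementary path $D'$ obtained by deleting those rows and the horizontal steps emanating from the bottom peaks. One checks that $D'\in\DDd(p-j,(n-k)\backslash(u+v))^{\ast(\ell-v),\circ(d-k+s)}$ for suitable parameters, where $s$ records how many of the $k$ bottom real steps begin a column that continues above the first return to the diagonal (the $k-s$ others being marked peaks removed here, whence $d\mapsto d-(k-s)$), $j$ the number of zero valleys peeled off, $v$ the number of decorated rises sitting just above the peeled block, and $u$ the number of new bottom rows thereby created in $D'$; the $\area$ then factors out $t^{n-k-\ell}\,t^{p-j}$, while $\dinv$ splits as $\dinv(D')$ plus the contributions $q^{\binom s2}\qbinom ks_q=e_s[[k]_q]$ and $\qbinom{k+j-1}{j}_q=h_j[[k]_q]$ from the interactions of the $k$ bottom labels among themselves and with the peeled zero valleys, together with the analogous $q^{\binom v2}\qbinom{s+j}{v}_q\qbinom{s+j+u-1}{u}_q$ coming from the block just above. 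Summing over the positions of the decorations and zero valleys reproduces the summand of Theorem~\ref{thm: F iterated recursion} verbatim.

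The main obstacle is precisely this last decomposition: making it an honest bijection while respecting the side conditions of Definition~\ref{def: schroder_objects} (at least one bottom step not a zero valley; $\DPeak$ and $\ZVal$ disjoint; the first step's label nonzero), and above all tracking exactly how primary inversions, secondary inversions, and the bonus dinv (which vanishes on Dyck paths but whose bookkeeping must still be checked in intermediate square-path configurations that arise) redistribute between the peeled rows and $D'$, so that the $q$-powers assemble into precisely the stated $q$-binomials. This is the analysis carried out in \cite{DAdderio-Iraci-VandenWyngaerd-GenDeltaSchroeder}*{Section~4}, which we therefore simply invoke; we record the result here because it is the combinatorial engine behind the Schr\"oder case of the generalized Delta square conjecture established in the sequel.
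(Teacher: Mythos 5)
The paper does not actually prove this statement — it is imported verbatim from \cite{DAdderio-Iraci-VandenWyngaerd-GenDeltaSchroeder}*{Theorem~4.7} — and your proposal, which sketches the recursion-matching argument against Theorem~\ref{thm: F iterated recursion} (base cases plus peeling of the bottom diagonal) and then invokes that reference for the detailed bijection, is exactly the intended justification; the same strategy is carried out in full in the present paper for the square-path analogue (Theorem~\ref{thm:square_recursion}). Your account of the base cases and of the combinatorial roles of $j,s,u,v$ matches that proof, so there is nothing to correct.
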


We are going the prove the analogue of the above theorem for square paths (ending east).

\begin{theorem}
	\label{thm:square_recursion}
	$\SQE_{q,t}(p, n \backslash k)^{\ast \ell, \circ d} = S^{(d,\ell)}_{n,k; p}$ .
\end{theorem}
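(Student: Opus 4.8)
The strategy is to establish the statement by matching the combinatorial recursion for $\SQE_{q,t}(p, n \backslash k)^{\ast \ell, \circ d}$ with the recursion for $S^{(d,\ell)}_{n,k;p}$ proved in Theorem~\ref{thm:Snkpdl_recursion}. The key observation is the decomposition of a square path according to its breaking point: a decorated square path $P \in \SQE(p, n \backslash k)^{\ast \ell, \circ d}$ with shift $s > 0$ can be cut at its breaking point into two pieces, a ``bottom'' portion consisting of the steps weakly below the breaking point (which, after a suitable translation, is itself a decorated square path, or rather a path that can be rotated to become a Dyck-like object) and a ``top'' portion above the breaking point which is a Dyck path in $\mathsf{DD}(p', n' \backslash k')^{\ast \ell', \circ d'}$. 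This is exactly the analogue of the Dyck-path decomposition used in \cite{DAdderio-Iraci-VandenWyngaerd-GenDeltaSchroeder} to prove Theorem~\ref{thm:Fnkp_Theorem}; the new ingredient is keeping track of how the bonus dinv, the area, and the $q^k$-type factors transform under the cut.

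First I would isolate the Dyck case: when the shift $s=0$, the path is a Dyck path and $\SQE_{q,t}(p,n\backslash k)^{\ast\ell,\circ d}$ restricted to such paths is exactly $\mathsf{DDd}_{q,t}(p,n\backslash k)^{\ast\ell,\circ d} = F^{(d,\ell)}_{n,k;p}$ by Theorem~\ref{thm:Fnkp_Theorem}. This accounts for the term $F^{(d,\ell)}_{n,k;p}$ appearing on the right-hand side of the recursion in Theorem~\ref{thm:Snkpdl_recursion}. Then I would handle the paths with $s > 0$: the claim is that their generating function is precisely
\[
q^kt^{n-\ell-k} \sum_{j=0}^p \sum_{s=0}^k q^{\binom{s}{2}}\qbinom{s+j}{s}_q\qbinom{k+j-1}{s+j-1}_q t^{p-j}\sum_{u=0}^{n-\ell-k} \sum_{v=0}^{s+j} q^{\binom{v}{2}}\qbinom{u+v}{v}_q \qbinom{s+j+u-1}{s+j-v}_q S_{n-k,u+v;p-j}^{(d-k+s,\ell-v)}.
\]
The appearance of $S$ rather than $F$ on the right (and the factor $q^k$) reflects that the bottom portion of a path with positive shift is again a square path with positive shift possible, so it contributes a full $S$-generating function; this recursive self-similarity is what distinguishes the square case from the Dyck case and is the reason the recursion closes on $S$. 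I would verify the exponents: the $t^{n-\ell-k}$ and $t^{p-j}$ factors come from the area contributed by the shift of the bottom portion (each of the $n-\ell-k$ non-decorated, nonzero rows and each of the $p-j$ zero valleys in the relevant part gains one unit of area per unit of shift), the $q^k$ from the bonus dinv created when the $k$ ``lowest'' nonzero rows drop below the main diagonal, and the various $q$-binomials from the interleaving choices (how the labels/zero-valleys of the two pieces are shuffled, and how the decorated rises and peaks are distributed), exactly as in the bijective analysis of \cite{DAdderio-Iraci-VandenWyngaerd-GenDeltaSchroeder}*{Section~4}, modified by the contributions coming from the shift.

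The main obstacle I anticipate is the bookkeeping of the bonus dinv under the cut: one must check carefully that when the bottom portion is translated to sit with its breaking point at the origin, the pairs $(i,j)$ straddling the breaking point contribute the correct primary/secondary inversions, and that the ``$a_i(P)<0$, $i\notin\ZVal(P)$'' count splits additively into the bonus dinv of the (translated) bottom piece plus a clean correction term of the form $q^k$ times a $q$-binomial. A secondary subtlety is ensuring the condition ``$S \not\subseteq \ZVal(P)$'' (at least one non-zero-valley vertical step starting from the base diagonal) is compatible with the decomposition — this is precisely what guarantees the parameter $k$ in $\SQE(p,n\backslash k)^{\ast\ell,\circ d}$ is positive and well-defined on the bottom piece, and it dovetails with the $[n]_q/[k]_q$ factor built into the definition of $S$. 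Once the recursion and initial conditions are matched against Theorem~\ref{thm:Snkpdl_recursion}, the theorem follows by induction on $n$.
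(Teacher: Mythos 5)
Your high-level strategy --- show that $\SQE_{q,t}(p,n\backslash k)^{\ast\ell,\circ d}$ satisfies the recursion and the initial conditions of Theorem~\ref{thm:Snkpdl_recursion}, then conclude by induction --- is exactly the paper's, and you correctly identify the Dyck case (via Theorem~\ref{thm:Fnkp_Theorem}) as the source of the term $F^{(d,\ell)}_{n,k;p}$. But the combinatorial decomposition you propose does not produce that recursion. You cut the path at its breaking point into a bottom piece (from the origin to the first touch of the base diagonal) and a top piece, which you describe as a Dyck path. Since every vertical step at height $0$ starts on the base diagonal, and the path does not meet the base diagonal before the breaking point, \emph{all} of the minima of the area word lie in your top piece; a cut at the breaking point therefore yields a convolution over the position of that point, with a Dyck-type factor carrying all $k$ of the minima. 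That is a Can--Loehr-style identity, not the recursion of Theorem~\ref{thm:Snkpdl_recursion}. In the actual recursion the term $S^{(d-k+s,\ell-v)}_{n-k,u+v;p-j}$ is obtained from a path of positive shift by deleting the \emph{entire lowest diagonal} --- all $k+j$ rows at height $0$, together with the decorations on peaks at height $0$ and on rises at height $1$ --- and the recursion closes on $S$ (equivalently on $\SQE_{q,t}$) because the resulting smaller path may still have positive shift. It is not the ``bottom portion'' of a two-piece cut; you have the roles of the two terms essentially reversed.

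Concretely, with your decomposition the indices cannot be matched: in $S^{(d-k+s,\ell-v)}_{n-k,u+v;p-j}$ the parameter $u+v$ counts the non-zero-valley rows at height $1$ of the original path (the new minima after deletion), $s$ the minima that are neither decorated peaks nor zero valleys, $j$ the zero valleys at height $0$, and $v$ the decorated rises at height $1$ --- none of which is visible from a cut at the breaking point. You also defer the verification of every $q$-binomial to ``the bijective analysis of the Dyck case, modified by the shift'', which is where all the work lies (for instance $\qbinom{k+j-1}{s+j-1}_q$ records the primary dinv between decorated peaks at height $0$ and the other minima, the $-1$ reflecting that the last minimum cannot be a decorated peak, and $\qbinom{s+j+u-1}{s+j-v}_q$ records secondary dinv across the two lowest diagonals). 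To repair the argument, replace the breaking-point cut by deletion of the lowest diagonal, check that the area drops by $(n+p)-(k+j)-\ell$, that the bonus dinv of the deleted minima contributes $q^k$, and account for the lost primary and secondary inversions factor by factor; the initial condition $\SQE_{q,t}(p,n\backslash n)^{\ast\ell,\circ d}=F^{(d,\ell)}_{n,n;p}$ also needs the observation that a path ending east whose non-zero-valley rows are all minima must be a Dyck path.
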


\begin{proof}
	We will show that $\SQE_{q,t}(p, n \backslash k)^{\ast \ell, \circ d}$ satisfies the same recursion and initial conditions as $S^{(d,\ell)}_{n,k; p}$ in Theorem~\ref{thm:Snkpdl_recursion}.
	
	In other words we will show that 
	\begin{align*}
		\SQE_{q,t} & (p, n \backslash k)^{\ast \ell, \circ d} = F^{(d,\ell)}_{n,k; p} + q^k t^{n-k-\ell} \sum_{j=0}^p \sum_{s=0}^k q^{\binom{s}{2}} \qbinom{s+j}{s}_q \qbinom{k+j-1}{s+j-1}_q \\
		& \times t^{p-j} \sum_{u=0}^{n-k-\ell} \sum_{v=0}^{s+j} q^{\binom{v}{2}} \qbinom{u+v}{v}_q \qbinom{s+j+u-1}{s+j-v}_q \SQE_{q,t}(p-j, n-k \backslash u+v)^{\ast \ell-v, \circ d-(k-s)}
	\end{align*}
	with \[ \SQE_{q,t}(p, n \backslash n) = F^{(d,\ell)}_{n,n; p} = \delta_{\ell,0}q^{\binom{n-d}{2}}\qbinom{n}{n-d}\qbinom{n+p-1}{p}. \]
	
	The last identity is straightforward: if all the letters of the area word that are not zero valleys are minima, since the condition of ending east implies that one of them must be on the main diagonal (i.e. the corresponding letter of the area word is $0$), then all of them are on the main diagonal, hence the minimum of the area word is $0$ and the path is actually a Dyck path. The identity then follows from Theorem~\ref{thm: F iterated recursion}. Note here that we use the fact that $1$ is not a valley if the path starts with a north step. 
	
	Now for the recursive step. We give an overview of the combinatorial interpretations of all the variables appearing in this formula. We say that a vertical step of a path is \emph{at height $i$} if its corresponding letter in the area word equals $m+i$, where $m$ is the minimum of the area word (i.e. the steps on the base diagonal are at height $0$).
	
	\begin{itemize}
		
		\item $k-s$ is the number of decorated peaks at height $0$.
		\item $s$ is the number of minima in the area word whose index is not a decorated peak nor a zero valley.
		\item $j$ is the number of zero valleys at height $0$.
		\item $v$ is the number of decorated rises at height $1$.
		\item $u+v$ is the number of $m+1$'s in the area word whose index is not a zero valley. 
	\end{itemize}
	
	Start from a path $P$ in $\SQE(p, n \backslash k)^{\ast \ell, \circ d}$. If it is a Dyck path, thanks to \ref{thm:Fnkp_Theorem} it is counted by $F^{(d,\ell)}_{n,k; p}$.  Otherwise, remove all the minima from the area word, and then remove both the corresponding decoration on peaks, and decorations on rises at height one (which are not rises any more). In this way we obtain a path in  \[ \SQE(p-j, n-k \backslash u+v)^{\ast \ell-v, \circ d-(k-s)}. \]
	
	Notice that the steps we are deleting from the path in this way never lie on the line $x=y$ because the path is not a Dyck path. This implies that we do not need to make a distinction between paths starting north or east, i.e. paths where $1$ is a valley or not. Indeed all the vertical steps at height 0 are allowed to be zero valleys and the zero valleys at height 1 do not create any secondary dinv with the deleted letters since they are zero valleys. 
	
	Let us look at what happens to the statistics of the path. 
	
	The area goes down by the size ($n+p$), minus the number of zeroes in the area word ($k+j$) and the number of rises ($\ell$). This explains the term $t^{n-k-\ell} \cdot t^{p-j}$.
	
	The factor $q^k$ takes into account the bonus dinv that the minima generated (being them negative letters and not zero valleys). The factor $q^{\binom{s}{2}}$ takes into account the primary dinv among the minima that are neither zero valleys nor decorated peaks. The factor $\qbinom{s+j}{s}_q$ takes into account the primary dinv among the minima that are neither zero valleys nor decorated peaks, and minima that are zero valleys. Indeed, each time a one of the former follows one of the latter one unit of primary dinv is created. The factor $\qbinom{k+j-1}{s+j-1}_q$ takes into account the primary dinv among the minima that are decorated peaks (which are $k-s$) and the other minima (which are $s+j$), where we get $s+j-1$ because the last minimum cannot be a peak (thus it can't be a decorated peak).
	
	The factor $q^{\binom{v}{2}}$ takes into account the secondary dinv among steps at height $1$ that are decorated rises and steps at height $0$ that are directly below a decorated rise. The factor $\qbinom{u+v}{v}_q$ takes into account the secondary dinv among labels at height $1$ that are neither decorated rises nor zero valleys, and labels below a decorated rise. The factor $\qbinom{s+j+u-1}{s+j-v}_q$ takes into account the secondary among all the labels at height $1$ that are not zero valleys (which are $u+v$), and the labels at height $0$ that are neither decorated peaks nor below a decorated rise (which are $s+j-v$), where we get $u+v-1$ because the last rise comes after all the minima (because the last letter of the area word is non-negative).
	
	Summing over all the possible values of $j$, $s$, $u$, and $v$, we obtain the stated recursion. The initial conditions are easy to check.
\end{proof}

Since at least one of the steps at height 0 is not zero valley (see Definition~\ref{def: schroder_objects}), $k$ has to be at least $1$ and we get

\begin{equation}
\sum_{k=1}^{n-\ell} \SQE_{q,t}(p, n \backslash k)^{\ast \ell, \circ d} =\SQE_{q,t}(p, n)^{\ast \ell, \circ d}.
\end{equation}

Combining this with Theorem~\ref{thm:sumSchroeder} we deduce the Schr\"{o}der case of our generalized Delta square conjecture.

\begin{theorem}
	For $n,\ell,d,p\in \mathbb{N}$, $p\geq 0$, $n>\ell\geq 0$ and $n\geq d\geq 0$,
	\begin{equation}
	\frac{[n-\ell]_t}{[n]_t}\<\Delta_{h_p}\Delta_{e_{n-\ell}}\omega (p_{n}),e_{n-d}h_{d}\> = \SQE_{q,t}(p, n)^{\ast \ell, \circ d}.
	\end{equation}
\end{theorem}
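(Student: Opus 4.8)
The plan is to deduce this theorem purely by combining three ingredients already in hand: the combinatorial recursion of Theorem~\ref{thm:square_recursion}, the decomposition of the square-path generating function according to the parameter $k$, and the symmetric-function evaluation of Theorem~\ref{thm:sumSchroeder}. No new computation should be required; the argument is a three-line chaining of equalities.

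First I would make the decomposition over $k$ precise. Every $P\in\SQE(p,n)^{\ast\ell,\circ d}$ determines a well-defined integer $k=\#\{1\leq i\leq n\mid a_i(P)\text{ is minimal and }i\notin\ZVal(P)\}$, so $\SQE(p,n)^{\ast\ell,\circ d}$ is the disjoint union of the sets $\SQE(p,n\backslash k)^{\ast\ell,\circ d}$, and consequently $\SQE_{q,t}(p,n)^{\ast\ell,\circ d}=\sum_{k}\SQE_{q,t}(p,n\backslash k)^{\ast\ell,\circ d}$. I would then pin down the index range $1\leq k\leq n-\ell$: the lower bound is exactly condition (iii) of Definition~\ref{def: schroder_objects} (the ``ending east'' hypothesis forces at least one vertical step on the base diagonal that is not a zero valley), and the upper bound follows because a decorated rise is directly preceded by a vertical step, hence can never sit on the base diagonal, so the $k$ relevant base-diagonal steps and the $\ell$ decorated rises are disjoint subsets of the $n$ non-zero-valley vertical steps.

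With this in place, I would apply Theorem~\ref{thm:square_recursion} term by term, replacing each $\SQE_{q,t}(p,n\backslash k)^{\ast\ell,\circ d}$ by $S^{(d,\ell)}_{n,k;p}$, to obtain $\SQE_{q,t}(p,n)^{\ast\ell,\circ d}=\sum_{k=1}^{n-\ell}S^{(d,\ell)}_{n,k;p}$; then Theorem~\ref{thm:sumSchroeder} identifies the right-hand side with $\frac{[n-\ell]_t}{[n]_t}\<\Delta_{h_p}\Delta_{e_{n-\ell}}\omega(p_n),e_{n-d}h_d\>$, which is the claim. I do not expect a genuine obstacle at this stage: the real work has already been done in Theorem~\ref{thm:square_recursion}, whose proof checks that the square-path generating functions obey the same recursion and initial conditions as the $S^{(d,\ell)}_{n,k;p}$, and in Theorem~\ref{thm:sumSchroeder}, which is a Macdonald-polynomial manipulation via reciprocity and the Pieri coefficients together with \eqref{eq:pn_Enk}. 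The only point demanding care is the bookkeeping of the index range for $k$ and the verification that the decomposition is an honest set partition, and this is routine once the definitions are unwound.
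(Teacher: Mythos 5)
Your proposal is correct and is essentially the paper's own proof: the paper likewise observes that condition (iii) of Definition~\ref{def: schroder_objects} forces $k\geq 1$, writes $\SQE_{q,t}(p,n)^{\ast\ell,\circ d}=\sum_{k=1}^{n-\ell}\SQE_{q,t}(p,n\backslash k)^{\ast\ell,\circ d}$, and concludes by combining Theorem~\ref{thm:square_recursion} with Theorem~\ref{thm:sumSchroeder}. Your extra remark justifying the upper bound $k\leq n-\ell$ (decorated rises cannot lie on the base diagonal and are disjoint from both the zero valleys and the minimal non-zero-valley steps) is a correct and slightly more explicit version of bookkeeping the paper leaves implicit.
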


Notice that the $q,t$-square theorem of Can and Loehr \cite{Can-Loehr-2006} is the special case $p=\ell=d=0$ of our theorem.

\section{An involution}\label{sec:Invo}

Fix $m,n\in \mathbb{N}$, $m\geq 0$ and $n>0$. Let 
\begin{equation}
X\coloneqq\bigsqcup_{k=0}^{n-1}\mathsf{PLSQ^E}(m,n)^{\ast k},
\end{equation}
and define a map $\varphi \colon X \to X$ in the following way: if $P\in X$ has no rises, i.e. no two consecutive vertical steps, then $\varphi(P) \coloneqq P$; otherwise, consider the first rise encountered by following the path $P$ starting from its breaking point (notice that this rise will always occur before the north-east corner): if the rise is decorated/undecorated, then $\varphi(P)$ is the path obtained from $P$ by undecorating/decorating that rise. Observe that $\varphi$ is clearly an involution, whose fixed points are the paths $P\in X$ with no rises, i.e. the paths of area $0$ with no decorated rises. Notice also that $\varphi$ restricts to an involution of
\begin{equation}
Y\coloneqq\bigsqcup_{k=0}^{n-1}\mathsf{PLD}(m,n)^{\ast k}\subset X.
\end{equation}

For any $P\in X$ we define a \emph{weight} by setting
\begin{equation}
wt(P)\coloneqq(-t)^{\mathsf{dr}(P)}q^{\dinv(P)}t^{\area(P)}x^P
\end{equation}
where $\mathsf{dr}(P)$ is defined to be the number of decorated rises of $P$.

Observe that
\begin{equation}
\sum_{P \in X}wt(P) = \sum_{s=0}^{n-1}(-t)^s\mathsf{PLSQ^E}_{\underline{x},q,t}(m,n)^{\ast s}
\end{equation}
and
\begin{equation}
\sum_{P \in Y}wt(P) = \sum_{s=0}^{n-1}(-t)^s\mathsf{PLD}_{\underline{x},q,t}(m,n)^{\ast s}.
\end{equation}

Suppose that $P\in X$ is such that $\varphi(P)\neq P$. Notice that the rise occurring in the definition of $\varphi$ is always at distance $1$ from the base diagonal, so undecorating/decorating it when it is decorated/undecorated gives $\mathsf{dr}(\varphi(P))=\mathsf{dr}(P)\mp 1$, but $\area(\varphi(P))=\area(P)\pm 1$. Since the decorations of the rises do not affect the dinv, we deduce
that $wt(\varphi(P))=-wt(P)$. This shows that in the sum $\sum_{P \in X}wt(P)$ all the contributions of the $P$ that are not fixed by $\varphi$ cancel out, leaving the sum over the fixed points of $\varphi$, i.e. over the paths with no rises.

The same argument applies to the sum $\sum_{P \in Y}wt(P)$.

This discussion proves the following theorem, which is the combinatorial counterpart of Theorem~\ref{thm:SFinvoDelta} and Theorem~\ref{thm:SFinvoDeltaSQ} under the Delta conjectures.
\begin{theorem}
	Given $m,n\in \mathbb{N}$, $m\geq 0$ and $n\geq 1$, we have
	\begin{equation}
	\sum_{s=0}^{n-1}(-t)^s\mathsf{PLSQ^E}_{\underline{x},q,t}(m,n)^{\ast s} = \mathsf{PLD}_{\underline{x},q,0}(m,n)^{\ast 0}
	\end{equation}
	and
	\begin{equation}
	\sum_{s=0}^{n-1}(-t)^s\mathsf{PLD}_{\underline{x},q,t}(m,n)^{\ast s} = \mathsf{PLD}_{\underline{x},q,0}(m,n)^{\ast 0}.
	\end{equation}
\end{theorem}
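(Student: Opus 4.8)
The plan is to prove both identities simultaneously by a single sign-reversing involution on decorated (square) paths, in the spirit of the discussion preceding the statement.

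First I would set up the signed sums. Fix $m,n$ with $n\geq 1$, put $X\coloneqq\bigsqcup_{k=0}^{n-1}\mathsf{PLSQ^E}(m,n)^{\ast k}$ and $Y\coloneqq\bigsqcup_{k=0}^{n-1}\mathsf{PLD}(m,n)^{\ast k}\subseteq X$, and attach to each path $P$ the signed weight $wt(P)\coloneqq(-t)^{\mathsf{dr}(P)}q^{\dinv(P)}t^{\area(P)}x^P$ with $\mathsf{dr}(P)\coloneqq|\DRise(P)|$. Grouping by the number of decorated rises gives $\sum_{P\in X}wt(P)=\sum_{s=0}^{n-1}(-t)^s\mathsf{PLSQ^E}_{\underline x,q,t}(m,n)^{\ast s}$ and $\sum_{P\in Y}wt(P)=\sum_{s=0}^{n-1}(-t)^s\mathsf{PLD}_{\underline x,q,t}(m,n)^{\ast s}$, so it suffices to evaluate the two left-hand sides.

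Second I would define the involution $\varphi$ on $X$ (which will restrict to $Y$): set $\varphi(P)=P$ when $P$ has no rise, and otherwise toggle the decoration of a canonical rise of $P$ — the first rise encountered by following $P$ forward from its breaking point. The geometric facts to check are that from the breaking point the path must go north (an east step would reach a diagonal strictly below the base diagonal), so this first rise always starts exactly one unit above the base diagonal, and that toggling a decoration alters neither the underlying path nor the area of the other rows, so the canonical rise is unchanged and $\varphi$ is a genuine involution. Since this rise sits at height $1$, toggling it changes $\area$ by $\pm1$ and $\mathsf{dr}$ by $\mp1$ while leaving $\dinv$ and $x^P$ untouched (rise decorations do not affect the dinv), hence $wt(\varphi(P))=-wt(P)$; all non-fixed terms cancel and each signed sum collapses to $\sum_{P=\varphi(P)}wt(P)$.

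Finally I would identify the fixed points — the paths with no rise, equivalently those with weakly decreasing area word. For a square path ending east the last area-word letter is $\geq 0$, so a weakly decreasing area word is everywhere $\geq 0$; the path then has shift $0$ and is a Dyck path, and a weakly decreasing area word starting at $0$ is identically $0$. Thus the fixed point sets inside $X$ and inside $Y$ coincide: they are exactly the area-$0$ paths of $\mathsf{PLD}(m,n)^{\ast 0}$, on which $\mathsf{dr}=\area=0$ so $wt(P)=q^{\dinv(P)}x^P$, and summing gives $\mathsf{PLD}_{\underline x,q,0}(m,n)^{\ast 0}$ in both cases, which are the two claimed equalities. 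The only step needing genuine care is the well-definedness of $\varphi$: choosing the toggled rise so that it is manifestly stable under toggling, and verifying that it always sits at height exactly $1$ (so toggling shifts the area by a single unit); everything else is routine bookkeeping of the three statistics.
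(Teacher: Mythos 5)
Your proposal is correct and follows essentially the same route as the paper's Section~\ref{sec:Invo}: the identical signed weight $wt(P)=(-t)^{\mathsf{dr}(P)}q^{\dinv(P)}t^{\area(P)}x^P$, the same involution toggling the decoration of the first rise met after the breaking point (which sits at height $1$ above the base diagonal, so $\area$ and $\mathsf{dr}$ trade off by one unit and $wt$ changes sign), and the same identification of the fixed points as the area-$0$ undecorated Dyck paths. The extra details you supply — that the step leaving the breaking point must be north, that the path alternates $NE$ until the first rise, and that a rise-free area word of a square path ending east forces a Dyck path with all-zero area word — are exactly the facts the paper leaves implicit, so nothing further is needed.
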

Combining this theorem with Theorem~\ref{thm:SFinvoDelta} and Theorem~\ref{thm:SFinvoDeltaSQ}, and with Proposition~\ref{prop:t0}, we get immediately the following curious corollary.
\begin{corollary}
	For fixed $m,n\in \mathbb{N}$, with $m\geq 0$ and $n> 0$, the truth of the generalized Delta (square) conjectures for all values of $k$ in $\{0,1,\dots,n-1\}$ except one imply the truth of the missing case.
\end{corollary}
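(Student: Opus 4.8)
The plan is to exploit the fact that Theorems~\ref{thm:SFinvoDelta} and~\ref{thm:SFinvoDeltaSQ}, the combinatorial involution theorem just proved, and Proposition~\ref{prop:t0} all feature the \emph{same} right-hand side. Indeed, Theorem~\ref{thm:SFinvoDelta} reads
\[
\sum_{s=0}^{n-1}(-t)^s\Delta_{h_m}\Delta'_{e_{n-s-1}}e_n = \qbinom{m+n-1}{m}_q \left.\nabla e_n\right|_{t=0},
\]
Theorem~\ref{thm:SFinvoDeltaSQ} gives the analogous identity for $\frac{[n-s]_t}{[n]_t}\Delta_{h_m}\Delta_{e_{n-s}}\omega(p_n)$ with the same right-hand side, the involution theorem gives
\[
\sum_{s=0}^{n-1}(-t)^s \PLD_{\underline x,q,t}(m,n)^{\ast s} = \PLD_{\underline x,q,0}(m,n)^{\ast 0} = \sum_{s=0}^{n-1}(-t)^s \PLSQE_{\underline x,q,t}(m,n)^{\ast s},
\]
and Proposition~\ref{prop:t0} (together with \eqref{eq:deltaq0} and \eqref{eq:h_q_binomial}, exactly as in its proof) identifies $\PLD_{\underline x,q,0}(m,n)^{\ast 0}$ with $\Delta_{h_m}\left.\nabla e_n\right|_{t=0}=\qbinom{m+n-1}{m}_q\left.\nabla e_n\right|_{t=0}$. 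So the symmetric function alternating sum and the combinatorial alternating sum are literally equal, for both the Delta and the Delta square families.

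First I would subtract these two equal alternating sums. For $0\le s\le n-1$, put $\varepsilon_s \coloneqq \Delta_{h_m}\Delta'_{e_{n-s-1}}e_n - \PLD_{\underline x,q,t}(m,n)^{\ast s}$ in the Delta case and $\varepsilon_s \coloneqq \frac{[n-s]_t}{[n]_t}\Delta_{h_m}\Delta_{e_{n-s}}\omega(p_n) - \PLSQE_{\underline x,q,t}(m,n)^{\ast s}$ in the square case, each regarded as an element of $\Lambda$ (equivalently, of the ring of formal series in $\underline x$ with coefficients in $\mathbb{Q}(q,t)$). Combining the identities above, in both cases one obtains
\[
\sum_{s=0}^{n-1}(-t)^s \varepsilon_s = 0.
\]
Now assume the generalized Delta (respectively generalized Delta square) conjecture is known for every $k'\in\{0,1,\dots,n-1\}$ except $k'=k$; then $\varepsilon_{k'}=0$ for all $k'\ne k$, so the sum collapses to $(-t)^k\varepsilon_k=0$. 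Since $(-t)^k$ is a nonzero element of the field $\mathbb{Q}(q,t)$, multiplication by it is injective, hence $\varepsilon_k=0$, which is precisely the missing case. This settles both conjectures simultaneously, the argument being identical modulo which pair of alternating-sum identities one invokes.

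I do not expect a genuine obstacle in this last step: the substantive content — constructing the sign-reversing involution $\varphi$ of the previous section and proving the two symmetric function identities of Theorems~\ref{thm:SFinvoDelta} and~\ref{thm:SFinvoDeltaSQ} — has already been carried out, and what remains is only the cancellation of the nonzero scalar $(-t)^k$. The one point deserving a moment's care is bookkeeping: checking that the two alternating sums run over exactly the index set $\{0,1,\dots,n-1\}$, and invoking Proposition~\ref{prop:t0} to pin down the common constant term, rather than attempting any term-by-term comparison of the conjectures.
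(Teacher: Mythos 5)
Your argument is correct and is exactly the one the paper intends: the paper simply states that the corollary follows ``immediately'' from combining the involution theorem with Theorems~\ref{thm:SFinvoDelta} and~\ref{thm:SFinvoDeltaSQ} and Proposition~\ref{prop:t0}, and your write-up fills in precisely that combination --- equating the two alternating sums via the common right-hand side $\qbinom{m+n-1}{m}_q\,{\nabla e_n}_{\big|_{t=0}}$ and cancelling the nonzero factor $(-t)^k$. No gap; this matches the paper's approach.
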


\section{Open problems}

In force of the strong connection with the Delta conjecture, it is natural to ask for an analogue of any result or open problem about the Delta conjecture. We will not list all the possibilities here, but we refer to the literature on the Delta conjecture (e.g. the articles mentioned in the present work) for taking inspiration. 

Here we limit ourselves to mention the following problem: in the present work we proved the case $q=0$ of the Delta square conjecture (which reduced to the same case of the Delta conjecture); but notice that in general $[n-k]_t/[n]_t\Delta_{e_{n-k}}\omega(p_n)$ is not symmetric in $q$ and $t$, so, oddly enough, our work leaves open the case $t=0$ of the Delta square conjecture.

\section*{Appendix: a new proof of the Delta at $q=0$}

In this section we sketch a new proof of the Delta conjecture at $q=0$, i.e. of
\begin{equation} \label{eq:Deltaq0}
\left.\Delta_{e_{n-k-1}}'e_n\right|_{q=0}  = \mathsf{PLD}_{\underline x,0,t}(0,n)^{\ast k}.
\end{equation}

Our proof is different from both the original proof in \cite{Garsia-Haglund-Remmel-Yoo-2017} and the alternative one given in \cite{Haglund-Rhoades-Shimozono-arxiv}, though the general strategy is borrowed from the latter.

\medskip

\textbf{The strategy.} On the combinatorial side, in \cite{Haglund-Rhoades-Shimozono-Advances}*{Lemma~3.7} the authors proved essentially the following proposition, though using a different combinatorial interpretation.
\begin{proposition} \label{prop:deltaq0_Comb}
	For $j\geq 1$
	\begin{equation} \label{eq:comb_reco_Deltaq0}
	h_{j}^\perp \mathsf{PLD}_{\underline x,0,t}(0,n)^{\ast k}=\sum_{r=0}^jt^{\binom{j-r}{2}}\qbinom{n-k}{r}_t\qbinom{n-k-r}{j-r}_t \mathsf{PLD}_{\underline x,0,t}(0,n-j)^{\ast k-j+r},
	\end{equation}
	where $h_{j}^\perp$ is the adjoint operator with respect to the Hall scalar product of the multiplication by $h_j$.
\end{proposition}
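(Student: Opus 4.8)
The plan is to prove the combinatorial recursion \eqref{eq:comb_reco_Deltaq0} directly, by interpreting the operator $h_j^\perp$ on the Schur-positive generating function of $q=0$ Dyck paths combinatorially. Recall that at $q=0$ the only surviving paths are those of dinv zero, and on such paths the area statistic and the labels are essentially decoupled in a controlled way: a dinv-zero partially labelled decorated Dyck path is determined by its \emph{area word}, which at $q=0$ has a very rigid structure, together with a labelling that is forced to be weakly increasing along the reading order. The key known fact is that $\langle f, h_\mu\rangle$ counts, in the monomial expansion, the $x$-monomials of content $\mu$, so $h_j^\perp$ corresponds combinatorially to \emph{removing $j$ vertical steps whose labels are the $j$ smallest, i.e.\ removing the bottom $j$ rows of the "staircase" of labels}, and re-normalizing. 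Thus the strategy is: (i) set up a bijective/weight-preserving description of the effect of $h_j^\perp$ on $\mathsf{PLD}_{\underline x,0,t}(0,n)^{\ast k}$ as "delete the smallest $j$ labels and record the area lost"; (ii) show that the resulting objects are exactly the paths in $\mathsf{PLD}_{\underline x,0,t}(0,n-j)^{\ast k-j+r}$ for varying $r$, where $r$ counts how many of the deleted steps carried a decoration that must be "absorbed"; (iii) compute the $t$-generating function of the deletion data, which should produce precisely the factor $t^{\binom{j-r}{2}}\qbinom{n-k}{r}_t\qbinom{n-k-r}{j-r}_t$.

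**Key steps in order.** First I would recall the explicit description of $q=0$ decorated Dyck paths: at $q=0$ the area word must be such that consecutive letters differ by at most $1$ when not decorated, and the non-decorated rises are forbidden, so the $k$ decorated rises are precisely the positions where the area word strictly increases; this pins down the shape up to the choice of a composition recording the "blocks" at each diagonal level. Second, I would observe that $h_j^\perp$ acts on a monomial symmetric function by $h_j^\perp m_\lambda = \sum m_{\lambda/(j\text{-horizontal-strip-ish})}$; concretely, applied to our generating function, $h_j^\perp$ amounts to choosing a multiset of $j$ labels to delete, but the increasing-columns and "$0$ not in first column" conditions, combined with dinv $=0$, force these to be (in the appropriate sense) the bottom $j$ steps along the main diagonal. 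Third, deleting these $j$ steps: some of them are ordinary diagonal steps and some sit just below a decorated rise (so the decoration slides down by one after deletion); the parameter $r$ should be the number of deleted steps that were \emph{not} directly below a decoration, equivalently the number of decorations that survive "unchanged", and the new number of decorations becomes $k-(j-r) = k-j+r$. Fourth, the area bookkeeping: deleting $j$ rows from the bottom lowers the area of the remaining rows in a way governed by how many of the deleted steps lie strictly below vs.\ flush with the rows above, and a standard $q$-binomial-style counting of lattice paths / Gaussian-binomial identity gives $t^{\binom{j-r}{2}}\qbinom{n-k}{r}_t\qbinom{n-k-r}{j-r}_t$ as the generating function over all valid deletions with fixed $r$. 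Finally, summing over $r$ from $0$ to $j$ yields \eqref{eq:comb_reco_Deltaq0}.

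**Where the difficulty lies.** I expect the main obstacle to be Step three/four: correctly tracking what happens to the \emph{decorations} and to the \emph{zero valleys} (if any were present, though here $m=0$ so the first-column constraint is the relevant subtlety) as the bottom $j$ steps are removed, and checking that the set of decorated paths one lands in is exactly $\mathsf{PLD}_{\underline x,0,t}(0,n-j)^{\ast k-j+r}$ with the right multiplicity. In particular one must verify that the map "delete smallest $j$ labels" is well-defined (the choice of which $j$ labels is forced, not free) and that the fibre over a given target path, weighted by $t^{\text{area lost}}$, contributes exactly the claimed product of Gaussian binomials; this is where a clean lattice-path reformulation of the deletion data — choosing which of the $n-k$ "non-decorated diagonal steps" to remove ($\qbinom{n-k}{r}_t$ after reindexing) and which $j-r$ decorated-rise-supports to collapse, with an overall $t^{\binom{j-r}{2}}$ from nesting — needs to be made precise. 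Once that identity is established, the recursion propagates and, combined with the symmetric-function side (which satisfies the matching recursion via the Pieri rule applied to $\widetilde H_{(n)}[X;q,0] = h_n[X/(1-q)]\prod(1-q^i)$ at $q=0$), yields \eqref{eq:Deltaq0} by induction on $n$, with the base case $n=0$ trivial.
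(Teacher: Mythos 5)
There is a genuine gap here, and it starts with your combinatorial reading of $h_j^\perp$. The superization lemma that the paper invokes (from the proof of Lemma~3.1 in the Haglund--Rhoades--Shimozono preprint) realizes $h_j^\perp$ on $\mathsf{PLD}_{\underline x,0,t}(0,n)^{\ast k}$ as the restriction to those paths whose dinv reading word is a shuffle of a permutation of the small labels with the \emph{decreasing} string $n,n-1,\dots,n-j+1$ of the $j$ \emph{biggest} labels, which are then specialized to $1$. For a dinv-$0$ path these $j$ big labels are forced to be \emph{peaks}, sitting in distinct columns anywhere along the path --- not the $j$ smallest labels, and certainly not ``the bottom $j$ rows'' or the bottom $j$ main-diagonal steps (in Figure~\ref{fig:pf} the two big cars $8$ and $7$ sit in rows $2$ and $5$, at heights $1$ and $3$). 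Relatedly, your structural description of the $q=0$ objects is actually the $t=0$ description: at $q=0$ the surviving paths are those of dinv $0$, i.e.\ weakly increasing area word with labels weakly decreasing up each diagonal; the rises need not all be decorated, the number of decorations $k$ is an independent parameter, and the area is a genuine nontrivial statistic recorded by $t$. The picture in which ``the $k$ decorated rises are precisely the positions where the area word strictly increases'' belongs to $\mathsf{PLD}_{\underline x,q,0}$ (see Remark~\ref{rmk:comb_qt_symm}), not to $\mathsf{PLD}_{\underline x,0,t}$.

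Because of this, the part you defer to ``standard $q$-binomial counting'' is in fact the entire content of the proof and would not go through as set up. The paper's argument removes the $j$ big peaks one at a time via a removal operation that slides the decorations weakly below the removed peak down by one rise; each removal is \emph{rise-preserving} or \emph{rise-killing} according to whether the bottom-most rise was undecorated or decorated, and $r$ is the number of rise-preserving removals (so the target has $k-(j-r)$ decorations). One must distinguish \emph{contractible} peaks from non-contractible ones, remove first the bottom-most contractible and then the top-most non-contractible big peaks, compute the exact area loss of each removal (number of undecorated rises weakly below, plus, for non-contractible peaks, the undecorated vertical steps above), and prove that the $r$ rise-preserving losses form a weakly increasing sequence in $\{0,\dots,n-k-r\}$ while the $j-r$ rise-killing losses form a strictly increasing sequence in $\{0,\dots,n-k-r-1\}$, with the whole process reversible given the loss data; this is what produces $\qbinom{n-k}{r}_t$ and $t^{\binom{j-r}{2}}\qbinom{n-k-r}{j-r}_t$ respectively. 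None of this is visible from a ``delete the bottom $j$ diagonal steps'' picture, so as written your Steps (ii)--(iv) would fail; you would need to replace them with (or reconstruct) the peak-removal analysis above.
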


In the next subsection we give a sketch of the proof using directly our definitions. 

Observe now that the identity \eqref{eq:Deltaq0} would follow easily from \eqref{eq:comb_reco_Deltaq0} and the following proposition, that we are going to prove later in this appendix.
\begin{proposition} \label{prop:deltaq0_SF}
	For $j\geq 1$
	\begin{equation} \label{eq:SF_reco_Deltaq0}
	h_{j}^\perp \left.\Delta_{e_{n-k-1}}'e_n\right|_{q=0}=\sum_{r=0}^jt^{\binom{j-r}{2}}\qbinom{n-k}{r}_t\qbinom{n-k-r}{j-r}_t \left.\Delta_{e_{n-k-r-1}}'e_{n-j}\right|_{q=0}.
	\end{equation}
\end{proposition}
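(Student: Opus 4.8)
The plan is to prove the symmetric–function recursion \eqref{eq:SF_reco_Deltaq0} by expanding everything in the modified Macdonald basis and applying the Pieri rule \eqref{eq:def_cmunu} for $h_j^\perp$, in parallel with the combinatorial recursion \eqref{eq:comb_reco_Deltaq0}. First I would record the closed form already used in the proof of the Delta square conjecture at $q=0$: combining \eqref{eq:en_expansion}, the specialization \eqref{eq:espec}, the identity $\left.B_\mu\right|_{q=0}=[\ell(\mu)]_t$, and $\qbinom{\ell-1}{n-k-1}_t[\ell]_t=\qbinom{\ell}{n-k}_t[n-k]_t$, one has
\[
\left.\Delta_{e_{n-k-1}}'e_n\right|_{q=0}=[n-k]_t\sum_{\mu\vdash n}\left.e_{n-k}[B_\mu]\,\frac{M\Pi_\mu\widetilde H_\mu[X]}{w_\mu}\right|_{q=0}.
\]
It is essential here that at $q=0$ the quantities $B_\mu=[\ell(\mu)]_t$ and $\Pi_\mu=\prod_{i=1}^{\ell(\mu)-1}(1-t^i)$ depend on $\mu$ only through its number of parts $\ell(\mu)$.

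Next I would apply $h_j^\perp$ using $h_j^\perp\widetilde H_\mu=\sum_{\nu\subset_j\mu}c_{\mu\nu}^{(j)}\widetilde H_\nu$, interchange the two summations, and replace $c_{\mu\nu}^{(j)}=\frac{w_\mu}{w_\nu}d_{\mu\nu}^{(j)}$ via \eqref{eq:rel_cmunu_dmunu}, obtaining
\[
h_j^\perp\!\left.\Delta_{e_{n-k-1}}'e_n\right|_{q=0}=[n-k]_t\sum_{\nu\vdash n-j}\left.\frac{\widetilde H_\nu[X]}{w_\nu}\right|_{q=0}\ \sum_{\mu\supset_j\nu}\left.e_{n-k}[B_\mu]\,M\Pi_\mu\,d_{\mu\nu}^{(j)}\right|_{q=0}.
\]
Applying the same closed form to $\left.\Delta_{e_{n-k-r-1}}'e_{n-j}\right|_{q=0}$ expresses every summand of the right-hand side of \eqref{eq:SF_reco_Deltaq0} in terms of the same family $\left.\widetilde H_\nu\right|_{q=0}$, $\nu\vdash n-j$, which is a basis; hence it suffices to prove that, for each $\nu\vdash n-j$,
\[
[n-k]_t\sum_{\mu\supset_j\nu}\left.e_{n-k}[B_\mu]\,M\Pi_\mu\,d_{\mu\nu}^{(j)}\right|_{q=0}=\sum_{r=0}^{j}t^{\binom{j-r}{2}}\qbinom{n-k}{r}_t\qbinom{n-k-r}{j-r}_t[n-k-r]_t\left.e_{n-k-r}[B_\nu]\,M\Pi_\nu\right|_{q=0}.
\]

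This last identity is the heart of the argument, and the step I expect to require the most care. At $q=0$ the Pieri coefficients $d_{\mu\nu}^{(j)}$ degenerate to those governing $e_j\!\left[\tfrac{X}{1-t}\right]\widetilde H_\nu[X;0,t]$, so only a restricted family of skew shapes $\mu/\nu$ contributes; I would organize this family according to how $\mu/\nu$ meets the first column, which simultaneously controls the change $\ell(\mu)-\ell(\nu)=j-r$ in the number of rows and the change in the argument of the $e$-plethysm — this is exactly the index $r$ on the right-hand side. Within each such group the factor $\left.e_{n-k}[B_\mu]\,\Pi_\mu\right|_{q=0}$ is a constant depending only on $\ell(\mu)=\ell(\nu)+(j-r)$, the residual sum of $q=0$ Pieri coefficients collapses to $t^{\binom{j-r}{2}}\qbinom{n-k}{r}_t\qbinom{n-k-r}{j-r}_t$ by a $q$-Vandermonde-type $q$-binomial summation, and the plethystic addition formula $e_a[[\ell+m]_t]=\sum_i e_i[[\ell]_t]\,t^{\ell(a-i)+\binom{a-i}{2}}\qbinom{m}{a-i}_t$ converts $e_{n-k}\big[[\ell(\nu)+(j-r)]_t\big]$ into the required $e_{n-k-r}[B_\nu]$ after summing over $r$. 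Everything apart from this degeneration of $d_{\mu\nu}^{(j)}$ at $q=0$ and the ensuing $t$-binomial bookkeeping is formal manipulation with the identities recalled in Section~\ref{sec:SF}; alternatively one can phrase the whole reduction using the fact that multiplication by $e_j[X/M]$ is $*$-adjoint to $h_j^\perp$ (which follows from \eqref{eq:def_cmunu}, \eqref{eq:def_dmunu}, \eqref{eq:rel_cmunu_dmunu} and \eqref{eq:H_orthogonality}), turning the problem into a direct computation of $\left.h_j^\perp\Delta_{e_{n-k}}\omega(p_n)\right|_{q=0}$, but the route above seems the most transparent.
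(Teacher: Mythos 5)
Your outer reduction is exactly the paper's: expand $\left.\Delta_{e_{n-k-1}}'e_n\right|_{q=0}$ in the modified Macdonald basis, apply $h_j^\perp$ via \eqref{eq:def_cmunu}, convert $c_{\mu\nu}^{(j)}$ to $d_{\mu\nu}^{(j)}$ with \eqref{eq:rel_cmunu_dmunu}, and reduce, by linear independence of the $\widetilde H_\nu[X;0,t]$ with $\nu\vdash n-j$, to a single identity for each $\nu$. That target identity (after the observation from Section~5 that $e_{n-k-1}[B_\mu-1]B_\mu\big|_{q=0}=[n-k]_t\,e_{n-k}[B_\mu]\big|_{q=0}$) is precisely the paper's Lemma~\ref{lem:tech}, and your final $q$-Vandermonde bookkeeping matches the paper's as well.

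The genuine gap is in how you propose to prove that inner identity. You plan to make the $q=0$ degeneration of the Pieri coefficients $d_{\mu\nu}^{(j)}$ explicit, group the shapes $\mu\supset_j\nu$ by the number $j-r$ of cells of $\mu/\nu$ in the first column, and claim that "the residual sum of $q=0$ Pieri coefficients collapses to $t^{\binom{j-r}{2}}\qbinom{n-k}{r}_t\qbinom{n-k-r}{j-r}_t$ by a $q$-Vandermonde-type summation." As stated this cannot be correct: the quantity $\sum_{\mu\supset_j\nu,\ \ell(\mu)=\ell(\nu)+j-r}d_{\mu\nu}^{(j)}\big|_{q=0}$ depends only on $\nu$, $j$ and $r$, whereas the claimed value involves $n-k$; the $n-k$ dependence must instead be extracted from $e_{n-k}[B_\mu]\big|_{q=0}=e_{n-k}[[\ell(\nu)+j-r]_t]$ via the plethystic addition formula, which produces a full sum $\sum_i e_{n-k-i}[B_\nu]\cdot(\cdots)$ rather than the single term $e_{n-k-r}[B_\nu]$, so a further resummation and change of variables is needed and is nowhere verified. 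More fundamentally, your route requires an explicit formula for the grouped $q=0$ Pieri coefficient sums (essentially a Hall--Littlewood Pieri rule), which is not among the identities recalled in Section~\ref{sec:SF} and would have to be established separately. The paper avoids this entirely: Lemma~\ref{lem:tech} is proved by invoking Haglund's identity \eqref{eq:HaglundThm} with $A=e_j[X/M]$ and $F=e_{n-k-1}[X/M-1]e_1[X/M]$, which converts the sum over $\mu\supset_j\nu$ into a plethystic evaluation at $MB_\nu$; after a chain of manipulations using \eqref{eq:lem52}, \eqref{eq:e_h_expansion}, \eqref{eq:sumBmu} and \eqref{eq:deltaprime}, the $q=0$ specialization kills all terms except $\alpha=(1^{j+1})$ (since $T_\alpha(0,t)=\delta_{\alpha,(1^{j+1})}t^{\binom{j+1}{2}}$), and the individual Pieri coefficients are never needed. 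To complete your argument you would either have to supply and sum the explicit $q=0$ Pieri coefficients, or replace that step with an identity of the type \eqref{eq:HaglundThm} as the paper does.
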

\begin{remark} \label{rmk:qt_symmetry}
	Notice that the relation \eqref{eq:SF_reco_Deltaq0} is equivalent to the one where we exchange $q$ and $t$ everywhere: this is due to the well-known fact that the symmetric functions $\Delta_{e_{n-k-1}}'e_n$ are symmetric in $q$ and $t$ for all $n$ and $k$.
\end{remark}

Indeed, if two symmetric functions $f,g\in \Lambda^{(n)}$ with $n>0$ are such that $h_j^\perp f =h_j^\perp g$ for all $j\geq 1$, then it is not hard to see that we must have $f=g$ (cf. \cite{Haglund-Rhoades-Shimozono-Advances}*{Lemma~3.6}). Therefore, by induction on $n$, from Proposition~\ref{prop:deltaq0_Comb} and Proposition~\ref{prop:deltaq0_SF} we would deduce \eqref{eq:Deltaq0}.

\begin{remark}
	Observe that this is the same general strategy used in \cite{Haglund-Rhoades-Shimozono-arxiv}*{Theorem~4.2} to give an alternative proof of the Delta conjecture at $q=0$, though the authors use a relation similar but different from \eqref{eq:SF_reco_Deltaq0}. In any event, it should be noticed that our derivation of \eqref{eq:SF_reco_Deltaq0} will be completely different from what has been done in \cite{Haglund-Rhoades-Shimozono-arxiv} or in \cite{Garsia-Haglund-Remmel-Yoo-2017}.
\end{remark}

\begin{remark}
	Notice that the same argument, together with Remark~\ref{rmk:qt_symmetry} and Remark~\ref{rmk:comb_qt_symm}, proves also the Delta conjecture at $t=0$.
\end{remark}

So, in order to complete our proof of \eqref{eq:Deltaq0} we are going to prove Proposition~\ref{prop:deltaq0_Comb} and Proposition~\ref{prop:deltaq0_SF}: this is the content of the next two subsections.

\medskip

\textbf{Proof of Proposition~\ref{prop:deltaq0_Comb}.} We want to prove that for $j\geq 1$
\begin{equation}
h_{j}^\perp \mathsf{PLD}_{\underline x,0,t}(0,n)^{\ast k}=\sum_{r=0}^jt^{\binom{j-r}{2}}\qbinom{n-k}{r}_t\qbinom{n-k-r}{j-r}_t \mathsf{PLD}_{\underline x,0,t}(0,n-j)^{\ast k-j+r}.
\end{equation}

First of all, notice that, from general facts about superization \cite{Haglund-Book-2008}*{Chapter~6}, acting with $h_{j}^\perp$ on $\mathsf{PLD}_{\underline x,0,t}(0,n)^{\ast k}$ corresponds combinatorially to pick the elements $D$ in $\mathsf{PLD}(0,n)^{\ast k}$ in which the $j$ biggest labels appear in decreasing order in the dinv reading word (see Remark~\ref{rmk:dinv_reading} for the definition), and evaluating at $1$ the corresponding variables in $x^D$ (cf. the proof of \cite{Haglund-Rhoades-Shimozono-arxiv}*{Lemma~3.1}).

Let $D\in \mathsf{PLD}(0,n)^{\ast k}$ (i.e. $D$ is a partially labelled Dyck path of size $n$ with no zero valleys, and $k$ decorated rises) with dinv $0$, such that its dinv reading word is a shuffle of any permutation $\sigma \in \mathfrak{S}_{n-j}$ and a decreasing sequence $n, \dots, n-j+1$. Let us call \emph{big car} any label that is strictly greater than $n-j$, and \emph{small cars} the others.

Notice that all the big cars are necessarily peaks, hence they must lie in different columns. Also notice that the big cars are decreasing going bottom to top.

We need some definitions.

\begin{definition}
	Given a $D\in \mathsf{PLD}(0,n)^{\ast k}$ with dinv $0$, let $a=a(D) = a_1, \dots, a_n$ be its area word and $l = (l_1, \dots, l_n)$ be the sequence of its labels, where $l_i$ lies in the $i$-th row. Observe that such an object is characterized by the fact that the area word is weakly increasing, i.e. $a_i\leq a_{i+1}$ for $i=1,2,\dots,n-1$, and the inequalities $l_i>l_{i+1}$ when $a_i=a_{i+1}$.
	
	We say that an index $1 \leq i \leq n-1$ is \emph{contractible} if $a_{i-1} < a_{i} = a_{i+1}$ and $\ell_{i-1} < \ell_{i+1}$.
\end{definition}

Notice that contractible indices always correspond to peaks, thus we will refer to them as \emph{contractible peaks}.

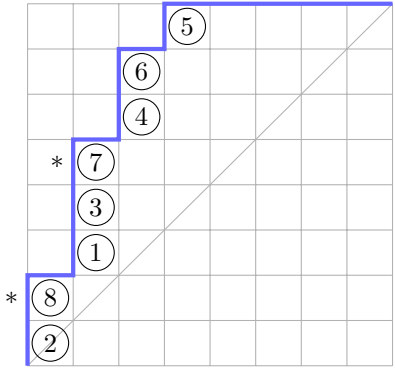
\begin{figure}[!ht]
	\centering
	
	\begin{tikzpicture}[scale=0.6]
	
	\draw[gray!60, thin] (0,0) grid (8,8);
	
	\draw[gray!60, thin] (0,0) -- (8,8);
	
	\draw[blue!60, line width = 1.6pt] (0,0) -- (0,1) -- (0,2) -- (1,2) -- (1,3) -- (1,4) -- (1,5) -- (2,5) -- (2,6) -- (2,7) -- (3,7) -- (3,8) -- (4,8) -- (5,8) -- (6,8) -- (7,8) -- (8,8);
	
	\draw (0.5,0.5) circle (0.4 cm) node {$2$};
	\draw (0.5,1.5) circle (0.4 cm) node {$8$};
	\draw (1.5,2.5) circle (0.4 cm) node {$1$};
	\draw (1.5,3.5) circle (0.4 cm) node {$3$};
	\draw (1.5,4.5) circle (0.4 cm) node {$7$};
	\draw (2.5,5.5) circle (0.4 cm) node {$4$};
	\draw (2.5,6.5) circle (0.4 cm) node {$6$};
	\draw (3.5,7.5) circle (0.4 cm) node {$5$};
	
	\node[left] at (0,1.5) {$\ast$};
	\node[left] at (1,4.5) {$\ast$};
	
	\end{tikzpicture}
	
	\caption{A parking function of size $8$ with dinv $0$ and $2$ decorated rises. The peaks labelled $6$ and $7$ are contractible. The peaks labelled $5$ and $8$ are not.}
	\label{fig:pf}
\end{figure}

We will now define a removing operation on the peaks of a labelled Dyck path with dinv $0$ as follows.

First of all, we choose a peak. Then we move all the decorations on the rises that lie weakly below that peak down by one rise: if the bottom-most rise is not decorated, the total number of decorated rises is preserved, and we call this \emph{rise-preserving removal}; otherwise we remove that decoration, letting the total number of decorated rises decrease by one, and we call this \emph{rise-killing removal}. If the peak is contractible, we remove the corresponding vertical step and the horizontal step immediately after it; otherwise, we remove the corresponding vertical step and the last horizontal step of the path. It is easy to see that the result of this procedure is still a labelled Dyck path with dinv $0$. See Figure~\ref{fig:pf} and Figure~\ref{fig:recursive_step} for an example of rise-killing removal. 
\begin{remark} \label{rmk:reversible}
	Notice that these two removal operations correspond, under the bijection defined in the proof of \cite{Haglund-Remmel-Wilson-2015}*{Proposition~4.1}, to the inverse of the insertions defined for the maj statistic in \cite{Wilson-Equidistribution}*{Section~4.2}. In any event, it is easy to see that these moves are indeed invertible once the loss of area is known.
\end{remark}
We describe now the \emph{removal algorithm} for the $j$ big cars. We apply the removing operation on the $j$ big cars (which are all necessarily peaks) on the bottom-most contractible peak among them, if any, and we repeat the procedure until there are no more contractible peaks. Then we apply the removing operation on the top-most non-contractible peak among them, if any, and we repeat the procedure until there are no more big cars.

\begin{figure}[!ht]
	\centering
	\begin{minipage}{0.36 \textwidth}
		\centering
		\begin{tikzpicture}[scale=0.55]
		\draw[gray!60, thin] (0,0) grid (8,8);
		
		\draw[gray!60, thin] (0,0) -- (8,8);
		
		\draw[blue!60, line width = 1.6pt] (0,0) -- (0,1) -- (0,2) -- (1,2) -- (1,3) -- (1,4) -- (1,5) -- (2,5) -- (2,6) -- (2,7) -- (3,7) -- (3,8) -- (4,8) -- (5,8) -- (6,8) -- (7,8) -- (8,8);
		
		\draw (0.5,0.5) circle (0.4 cm) node {$2$};
		\draw (0.5,1.5) circle (0.4 cm) node {$8$};
		\draw (1.5,2.5) circle (0.4 cm) node {$1$};
		\draw (1.5,3.5) circle (0.4 cm) node {$3$};
		\draw (1.5,4.5) circle (0.4 cm) node {$7$};
		\draw (2.5,5.5) circle (0.4 cm) node {$4$};
		\draw (2.5,6.5) circle (0.4 cm) node {$6$};
		\draw (3.5,7.5) circle (0.4 cm) node {$5$};
		
		\node[left] at (1,3.5) {$\ast$};
		
		\draw[->] (0.8,4.5) to [out=180, in=180] (0.4,3.5);
		\end{tikzpicture}
	\end{minipage}%
	\begin{minipage}{0.315 \textwidth}
		\centering
		\begin{tikzpicture}[scale=0.55]
		\draw[gray!60, thin] (0,0) grid (7,7);
		
		\draw[gray!60, thin] (0,0) -- (7,7);
		
		\draw[blue!60, line width = 1.6pt] (0,0) -- (0,1) -- (0,2) -- (1,2) -- (1,3) -- (1,4) -- (1,5) -- (1,6) -- (2,6) -- (2,7) -- (7,7);
		
		\draw (0.5,0.5) circle (0.4 cm) node {$2$};
		\draw (0.5,1.5) circle (0.4 cm) node {$8$};
		\draw (1.5,2.5) circle (0.4 cm) node {$1$};
		\draw (1.5,3.5) circle (0.4 cm) node {$3$};
		\draw (1.5,4.5) circle (0.4 cm) node {$4$};
		\draw (1.5,5.5) circle (0.4 cm) node {$6$};
		\draw (2.5,6.5) circle (0.4 cm) node {$5$};
		
		\node[left] at (1,3.5) {$\ast$};
		\end{tikzpicture}
	\end{minipage}%
	\begin{minipage}{0.28 \textwidth}
		\centering
		\begin{tikzpicture}[scale=0.55]
		\draw[gray!60, thin] (0,0) grid (6,6);
		
		\draw[gray!60, thin] (0,0) -- (6,6);
		
		\draw[blue!60, line width = 1.6pt] (0,0) -- (0,1) -- (1,1) -- (1,2) -- (1,3) -- (1,4) -- (1,5) -- (2,5) -- (2,6) -- (6,6);
		
		\draw (0.5,0.5) circle (0.4 cm) node {$2$};
		\draw (1.5,1.5) circle (0.4 cm) node {$1$};
		\draw (1.5,2.5) circle (0.4 cm) node {$3$};
		\draw (1.5,3.5) circle (0.4 cm) node {$4$};
		\draw (1.5,4.5) circle (0.4 cm) node {$6$};
		\draw (2.5,5.5) circle (0.4 cm) node {$5$};
		
		\node[left] at (1,3.5) {$\ast$};
		\end{tikzpicture}
	\end{minipage}
	
	\caption{The recursive step applied to the parking function in Figure~\ref{fig:pf} for $j=2$. It consists of a rise-killing removal on the peak with label $7$, followed by a rise-preserving removal on the peak with label $8$.}
	\label{fig:recursive_step}
\end{figure}
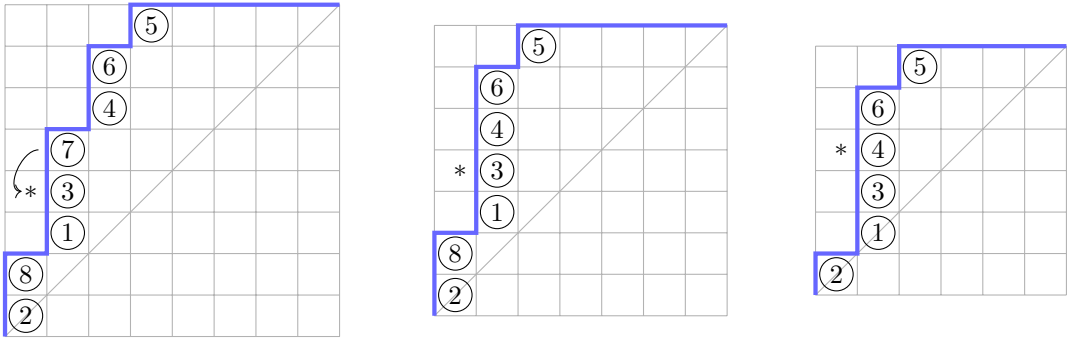

We claim that this algorithm is well defined, i.e. after we are done removing contractible peaks no big car can possibly become a contractible peak. In fact, after we are done removing contractible peaks no two big cars can be next to each other (otherwise the bottom-most one would be contractible), and the condition of being contractible only depends on the adjacent labels. Also notice that removing a contractible peak $i$ cannot create any contractible peak in any of the first $i-1$ rows.

We now want to compute the loss of the area given by our removal algorithm. We start by looking at what happens with a single removal of a big car.

When we remove a contractible peak $i$, the area first increases by the number of decorated rises that lie in the first $i$ rows (because all of them are moving to a rise at height exactly one less, except the bottom-most one, if decorated, which was at height one and disappears), then it decreases by $a_i$ (since it can't be a decorated rise any more). Notice that $a_i$ is equal to the number of rises in the first $i$ rows. It follows that the net area loss is given by the number of non-decorated rises in the first $i$ rows.

When we remove a non-contractible peak $i$ the area decreases by the same amount, plus the number of non-decorated vertical steps in rows $i+1$ to $n$ (since the corresponding letters of the area word are decreasing by one).

Let $r$ be the number of big cars that gets removed by our algorithm with a rise-preserving operation. We want to prove that the contributions of the $j-r$ rise-killing removals form a strictly increasing sequence of integers between $0$ and $n-k-r-1$, while the contributions of the $r$ rise-preserving removals form a weakly increasing sequence of integers between $0$ and $n-k-r$.

If we remove a contractible peak $i$, the number of non-decorated rises in the first $i$ rows can not possibly decrease (after removing the peak, the $i$-th row will contain a non-decorated rise by definition of contractible), and it increases by at least one if we perform a rise-killing removal (since the number of rises in the first $i$ rows is the same and the number of decorations is now one less). Furthermore we know that none of the first $i-1$ rows will contain a contractible peak, hence the new bottom-most contractible peak will have an index greater than or equal to $i$, which means that its contribution to the area is at least the same as the one of the peak we just removed, and it is in fact strictly greater if we performed a rise-killing removal.

Moreover, since we are removing non-contractible peaks top to bottom, the non-decorated vertical steps above each of them are weakly increasing, and the non-decorated rises that contributed for any of them still contribute (possibly as non-decorated steps strictly above instead of non-decorated rises weakly below), with the only possible exception of the last deleted peak; however, since big cars lie in different columns, we always have a valley (which is a non-decorated vertical step) between any two of them, and hence the contributions are weakly increasing. The same argument applies when we switch from contractible peaks to non-contractible ones. For the same reason as before, the contributions must strictly increase if we last performed a rise-killing removal, since the number of decorations decreases. 

The contribution of the last peak is at most the number of non-decorated vertical steps that are left (including itself) minus one (the first step is never counted, since it is weakly below the peak and it is not a rise), which is exactly $n-k-r$. If the last removal is rise-killing, however, it must be at least one unit smaller (the bottom-most rise is decorated, thus it doesn't contribute). 

It is easy to check that all the sequences can be achieved in this way, as the process is reversible once the losses of area are known (cf. Remark~\ref{rmk:reversible}).

Strictly increasing sequences of length $j-r$ of integers between $0$ and $n-k-r-1$ are $q$-counted by $q^{\binom{j-r}{2}} \qbinom{n-k-r}{j-r}_q$, while weakly increasing sequences of length $r$ of integers between $0$ and $n-k-r$ are $q$-counted by $\qbinom{n-k}{r}_q$. 

This completes the proof of Proposition~\ref{prop:deltaq0_Comb}.

\begin{remark} \label{rmk:comb_qt_symm}
	Notice that the relation \eqref{eq:comb_reco_Deltaq0} is true also when the roles of $q$ and $t$ are interchanged, i.e. 
	\begin{equation}
	h_{j}^\perp \mathsf{PLD}_{\underline x,q,0}(0,n)^{\ast k}=\sum_{r=0}^jq^{\binom{j-r}{2}}\qbinom{n-k}{r}_q\qbinom{n-k-r}{j-r}_q \mathsf{PLD}_{\underline x,q,0}(0,n-j)^{\ast k-j+r}.
	\end{equation}
	The argument is in fact slightly easier in this case, so we limit ourself to indicate the roles of the terms in the formula, leaving the details to the interested reader.
	
	The elements of $\mathsf{PLD}_{\underline x,q,0}(0,n)^{\ast k}$, i.e. the labelled Dyck paths of area $0$, are the ones for which the area word is a sequence of strictly increasing sequences all starting from $0$, and where all the rises are decorated, so that $n-k$ is the number of labels on the main diagonal. In the formula $r$ is the number of big cars on the diagonal, $\qbinom{n-k}{r}_q$ counts the dinv between the big cars and the small cars that lie on the main diagonal, $q^{\binom{j-r}{2}}\qbinom{n-k-r}{j-r}_q$ counts the dinv between the big cars and the small cars that are not on the diagonal, and $\mathsf{PLD}_{\underline x,q,0}(0,n-j)^{\ast k-j+r}$ keeps track of the remaining dinv among the small cars and the variables $\underline x$.
\end{remark}

\medskip

\textbf{Proof of Proposition~\ref{prop:deltaq0_SF}.} We want to prove that for $j\geq 1$
\begin{equation}
h_{j}^\perp \left.\Delta_{e_{n-k-1}}'e_n\right|_{q=0}=\sum_{r=0}^jt^{\binom{j-r}{2}}\qbinom{n-k}{r}_t\qbinom{n-k-r}{j-r}_t \left.\Delta_{e_{n-k-r-1}}'e_{n-j}\right|_{q=0}.
\end{equation}

\medskip

Using the expansion \eqref{eq:en_expansion}, it is easy to see (compare \cite{Haglund-Rhoades-Shimozono-arxiv}*{Equation~4.3}) that
\begin{align} \label{eq:specDeltaq0}
	\notag \left. \Delta_{e_{n-k-1}}'e_n\right|_{q=0}& =\sum_{\mu\vdash n} e_{n-k-1}[B_\mu-1] \left.\frac{M\Pi_\mu B_\mu\widetilde{H}_\mu[X;q,t]}{w_\mu}\right|_{q=0}\\
	&  = \sum_{\mu\vdash n} t^{\binom{n-k}{2}} \qbinom{\ell(\mu)-1}{n-k-1}_t\qbinom{\ell(\mu)}{m(\mu)}_t\widetilde{H}_\mu[X;0,t](-1)^{n-\ell(\mu)}t^{g(\mu)},
\end{align}
where we used \eqref{eq:espec} and the notation
\begin{equation}
\qbinom{\ell(\mu)}{m(\mu)}_t:=\qbinom{\ell(\mu)}{m_1(\mu),m_2(\mu),\dots,m_n(\mu)}_t,
\end{equation}
where $m_i(\mu)$ is defined to be number of parts of $\mu$ equal to $i$,
\begin{equation}
g(\mu):=-2n(\mu)-n+\sum_i\binom{m_i(\mu)+1}{2}\quad \text{ and }\quad n(\mu):=\sum_i \mu_i(i-1).
\end{equation}

We need the following lemma, that we are going to prove at the end of this Appendix.
\begin{lemma} \label{lem:tech}
	Given $\nu\vdash n$ and $j\geq 1$, we have
	\begin{align} \label{eq:lemmino}
		\left.\sum_{\mu \supset_j \nu}e_{n-k-1}[B_\mu-1]B_\mu\Pi_\mu d_{\mu \nu}^{(j)}\right|_{q=0} & = \Pi_\nu(0,t)\cdot t^{\binom{n-k-j}{2}} \qbinom{\ell(\nu)+j-1}{n-k-1}_t \qbinom{n-k}{j}_t[\ell(\nu)]_t .
	\end{align}
\end{lemma}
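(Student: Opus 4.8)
For fixed $\nu$ and $j$ the sum in \eqref{eq:lemmino} is finite (there are finitely many $\mu\supset_j\nu$), so we may put $q=0$ termwise; the plan is to reduce everything to one generating-function identity for the Pieri coefficients $d_{\mu\nu}^{(j)}$ at $q=0$. The first point is that the $q=0$ summand depends only on $\ell(\mu)$: one has the obvious $B_\mu|_{q=0}=[\ell(\mu)]_t$ and $\Pi_\mu|_{q=0}=\prod_{i=1}^{\ell(\mu)-1}(1-t^i)$, and \eqref{eq:espec} gives $e_{n-k-1}[B_\mu-1]|_{q=0}=t^{\binom{n-k}{2}}\qbinom{\ell(\mu)-1}{n-k-1}_t$, so that, with $\ell=\ell(\mu)$,
\[
\bigl(e_{n-k-1}[B_\mu-1]\,B_\mu\,\Pi_\mu\bigr)\big|_{q=0}=t^{\binom{n-k}{2}}\qbinom{\ell-1}{n-k-1}_t[\ell]_t\,(t;t)_{\ell-1}=[n-k]_t\,t^{\binom{n-k}{2}}\qbinom{\ell}{n-k}_t(t;t)_{\ell-1},
\]
where in the last step we use the $q$-binomial identity $\qbinom{\ell-1}{n-k-1}_t[\ell]_t=[n-k]_t\qbinom{\ell}{n-k}_t$ already exploited in Section~5.

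Next I would get hold of all the numbers $\sum_{\mu\supset_j\nu,\ \ell(\mu)=L}d_{\mu\nu}^{(j)}\big|_{q=0}$ at once by specializing the Pieri identity \eqref{eq:def_dmunu} at the alphabet $X=1-u$ and at $q=0$. Using the standard evaluation $\widetilde H_\mu[1-u]=\prod_{c\in\mu}\bigl(1-u\,q^{a_\mu'(c)}t^{l_\mu'(c)}\bigr)$, which at $q=0$ collapses to $\prod_{i=0}^{\ell(\mu)-1}(1-ut^i)$, together with the elementary $e_j\bigl[\tfrac{1-u}{1-t}\bigr]=\tfrac{1}{(t;t)_j}\prod_{a=0}^{j-1}(t^a-u)$ (a one-line consequence of the $q$-binomial theorem and \eqref{eq:h_q_prspec}), \eqref{eq:def_dmunu} becomes
\begin{equation*}
\frac{1}{(t;t)_j}\prod_{a=0}^{j-1}(t^a-u)\cdot\prod_{i=0}^{\ell(\nu)-1}(1-ut^i)=\sum_{\mu\supset_j\nu}\Bigl(d_{\mu\nu}^{(j)}\big|_{q=0}\Bigr)\prod_{i=0}^{\ell(\mu)-1}(1-ut^i).
\end{equation*}

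The polynomials $\{\prod_{i=0}^{L-1}(1-ut^i)\}_{L\ge0}$ form a graded basis of $\mathbb{Q}(t)[u]$, and the displayed identity is between polynomials in $u$ of degree $\ell(\nu)+j$. Hence the left hand side of \eqref{eq:lemmino} at $q=0$ is obtained by expanding $\prod_{a=0}^{j-1}(t^a-u)\prod_{i=0}^{\ell(\nu)-1}(1-ut^i)$ in this basis and then replacing each basis element $\prod_{i=0}^{L-1}(1-ut^i)$ by the corresponding summand coefficient $[n-k]_t\,t^{\binom{n-k}{2}}\qbinom{L}{n-k}_t(t;t)_{L-1}$ computed above, all divided by $(t;t)_j$. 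Carrying this out and simplifying the resulting finite sum of products of $t$-binomials via \eqref{eq:e_q_binomial} and \eqref{eq:h_q_binomial} should produce precisely $\Pi_\nu(0,t)\,t^{\binom{n-k-j}{2}}\qbinom{\ell(\nu)+j-1}{n-k-1}_t\qbinom{n-k}{j}_t[\ell(\nu)]_t$, which is the right hand side of \eqref{eq:lemmino}.

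The main obstacle is exactly this last simplification: expanding a product of two $q$-Pochhammer-type factors in the above basis is itself a $q$-Vandermonde / Pfaff--Saalsch\"utz-type identity, and the nontrivial point is that after the substitution the answer factors as a \emph{single} product — in particular one must check that the power of $t$ genuinely collapses to $\binom{n-k-j}{2}$ (note $\binom{n-k}{2}-\binom{n-k-j}{2}=j(n-k)-\binom{j+1}{2}$, so this is not formal) and that the two $t$-binomials $\qbinom{\ell(\nu)+j-1}{n-k-1}_t$ and $\qbinom{n-k}{j}_t$ come out unmixed. An alternative, staying strictly inside the toolkit of Section~4, is to first rewrite the left hand side of \eqref{eq:lemmino} as $\tfrac1M\,\bigl\langle e_j[X/M]\widetilde H_\nu,\ \Delta'_{e_{n-k-1}}e_n\bigr\rangle_\ast$ (using \eqref{eq:en_expansion} and \eqref{eq:H_orthogonality}), and then run the Macdonald--Koornwinder reciprocity chain of the proof of Theorem~\ref{thm:sumSchroeder} verbatim, introducing an auxiliary partition with \eqref{eq:e_h_expansion} and eliminating it with \eqref{eq:Macdonald_reciprocity}; the bookkeeping there is of the same order.
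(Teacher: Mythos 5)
Your reductions are correct as far as they go, and they follow a genuinely different route from the paper's: the observation that at $q=0$ the summand depends only on $\ell(\mu)$ (via $B_\mu|_{q=0}=[\ell(\mu)]_t$, $\Pi_\mu|_{q=0}=(t;t)_{\ell(\mu)-1}$ and \eqref{eq:espec}), and the idea of reading off the grouped sums $\sum_{\ell(\mu)=L}d^{(j)}_{\mu\nu}\big|_{q=0}$ by evaluating the Pieri rule \eqref{eq:def_dmunu} at $X=1-u$, $q=0$ (where $\widetilde H_\mu[1-u]$ collapses to $(u;t)_{\ell(\mu)}$), are both sound, and the basis argument for $\{(u;t)_L\}_{L\geq 0}$ is fine.

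However, there is a genuine gap, and you have located it yourself: everything after ``Carrying this out \dots should produce precisely'' is missing, and that is exactly where the content of the lemma lives. You have reduced the statement to the assertion that, after expanding $\frac{1}{(t;t)_j}\prod_{a=0}^{j-1}(t^a-u)\,(u;t)_{\ell(\nu)}$ in the basis $\{(u;t)_L\}$ and pairing the coefficients against $[n-k]_t\,t^{\binom{n-k}{2}}\qbinom{L}{n-k}_t(t;t)_{L-1}$, the resulting double sum collapses to a single product with the precise power $t^{\binom{n-k-j}{2}}$; but you perform neither the expansion (itself a nontrivial connection-coefficient problem between two shifted $t$-Pochhammer families) nor the final Saalsch\"utz-type summation, so the identity is not established. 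For comparison, the paper avoids ever computing the $q=0$ Pieri sums: it applies the identity $\sum_\mu\Pi_\mu F[MB_\mu]d^A_{\mu\nu}=\Pi_\nu\bigl(\Delta_{A[MX]}F\bigr)[MB_\nu]$ with $A=e_j[X/M]$ and $F=e_{n-k-1}[X/M-1]\,e_1[X/M]$, unwinds the right-hand side using $e_{n-k-1}[B_\beta-1]B_\beta=\sum_{\gamma\subset_k\beta}c^{(k)}_{\beta\gamma}B_\gamma T_\gamma$, and only then sets $q=0$, where $T_\alpha(0,t)=\delta_{\alpha,(1^{j+1})}t^{\binom{j+1}{2}}$ kills all but one term; the residual $t$-binomial algebra there is a single Pascal-type manipulation rather than a full $q$-Vandermonde evaluation (that evaluation is instead needed later, in the proof of Proposition~\ref{prop:deltaq0_SF}). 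Your closing alternative (rewriting the left-hand side as $\tfrac1M\langle e_j[X/M]\widetilde H_\nu,\Delta'_{e_{n-k-1}}e_n\rangle_*$ and ``running the reciprocity chain verbatim'') is a correct reformulation but again only a declaration of intent; the chain in Theorem~\ref{thm:sumSchroeder} does not transplant verbatim because the roles of the two partitions are not symmetric here. To complete your argument you would need to actually exhibit the connection coefficients and carry out the terminating sum.
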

We now have
\begin{align*}
	h_j^\perp \left.\Delta_{e_{n-k-1}}'e_n\right|_{q=0} & = \sum_{\mu\vdash n}e_{n-k-1}[B_\mu-1]\frac{MB_\mu\Pi_\mu }{w_\mu} h_j^\perp \left.\widetilde{H}_\mu[X;q,t] \right|_{q=0}\\
	\text{(using \eqref{eq:def_cmunu})}& = \sum_{\mu\vdash n}e_{n-k-1}[B_\mu-1]MB_\mu\Pi_\mu  \sum_{\nu\subset_j \mu}\frac{c_{\mu \nu}^{(j)}}{w_\mu} \left.\widetilde{H}_\nu[X;q,t] \right|_{q=0}\\
	\text{(using \eqref{eq:rel_cmunu_dmunu})}& = \sum_{\nu\vdash n-j} \frac{\widetilde{H}_\nu[X;0,t]}{w_\nu(0,t)}(1-t)\sum_{\mu\supset_j \nu} \left.e_{n-k-1}[B_\mu-1]B_\mu\Pi_\mu  d_{\mu \nu}^{(j)} \right|_{q=0}  \\
	\text{(using \eqref{eq:lemmino})}& = \sum_{\nu\vdash n-j} \frac{\widetilde{H}_\nu[X;0,t]}{w_\nu(0,t)}(1-t)\Pi_\nu(0,t)\cdot t^{\binom{n-k-j}{2}} \qbinom{\ell(\nu)+j-1}{n-k-1}_t \qbinom{n-k}{j}_t[\ell(\nu)]_t \\
	& = t^{\binom{j}{2}}\qbinom{n-k}{j}_t \sum_{\nu\vdash n-j} t^{\binom{n-k}{2}-j(n-k-1)}
	\qbinom{\ell(\nu)+j-1}{n-k-1}_t \times\\
	& \quad \times  \qbinom{\ell(\nu)}{m(\nu)}_t\widetilde{H}_\nu[X;0,t]  (-1)^{n-j-\ell(\nu)}t^{g(\nu)},
\end{align*}
where in the last equality we used
\begin{align}
	\binom{n-k-j}{2} 
	&=  \binom{n-k}{2} +\binom{j}{2} - j(n-k-1).
\end{align}

Recalling \eqref{eq:specDeltaq0}, in order to get \eqref{eq:SF_reco_Deltaq0}, we must have
\begin{align*}
	& \hspace{-2cm}t^{\binom{n-k}{2}-(n-k-1)j}\qbinom{\ell(\nu)+j-1}{n-k-1}_t t^{\binom{j}{2}}\qbinom{n-k}{j}_t=\\
	& = \sum_{r=0}^jt^{\binom{j-r}{2}}\qbinom{n-k}{r}_t\qbinom{n-k-r}{j-r}_t t^{\binom{n-k-r}{2}}\qbinom{\ell(\nu)-1}{n-k-r-1}_t.
\end{align*}
But using the substitution $r=n-k-m$ we have
\begin{align*}	
	& \hspace{-2cm}	\sum_{r=0}^jt^{\binom{j-r}{2}}\qbinom{n-k}{r}_t\qbinom{n-k-r}{j-r}_t t^{\binom{n-k-r}{2}}\qbinom{\ell(\nu)-1}{n-k-r-1}_t=\\
	& = \sum_{m=n-k-j}^{n-k}t^{\binom{j-n+k+m}{2}}\qbinom{n-k}{n-k-m}_t\qbinom{m}{j-n+k+m}_t t^{\binom{m}{2}}\qbinom{\ell(\nu)-1}{m-1}_t\\
	& = t^{\binom{n-k}{2}-j(n-k-1)+\binom{j}{2}}\qbinom{n-k}{j}_t \times\\
	& \quad \times \sum_{m \geq 1} t^{(m-n+k+j)(m-1)} \qbinom{j}{n-k-m}_t\qbinom{\ell(\nu)-1}{m-1}_t
\end{align*}
where in the last equality we used an easy manipulation of $t$-binomials and 
\begin{align*}
	\binom{j-n+k+m}{2} +\binom{m}{2}
	& = \binom{n-k}{2}-j(n-k-1)+\binom{j}{2}+(m-n+k+j)(m-1).
\end{align*}
So we are left to show
\begin{align}
	\qbinom{\ell(\nu)+j-1}{n-k-1}_t & = \sum_{m \geq 1} t^{(m-n+k+j)(m-1)} \qbinom{j}{n-k-m}_t\qbinom{\ell(\nu)-1}{m-1}_t,
\end{align}
which is none other than the well-known $q$-Vandermonde (cf. \cite{Andrews-Book-Partitions}*{Equation~(3.3.10)}).

\medskip

To complete our proof of the Delta conjecture at $q=0$ it remains only to prove Lemma~\ref{lem:tech}.

\medskip

\textbf{Proof of Lemma~\ref{lem:tech}.} Given $\nu\vdash n$ and $j\geq 1$, we want to prove
\begin{align} 
	\hspace{-0.5cm}\left.\sum_{\mu \supset_j \nu}e_{n-k-1}[B_\mu-1]B_\mu\Pi_\mu d_{\mu \nu}^{(j)}\right|_{q=0} & = \Pi_\nu(0,t)[\ell(\nu)]_t \cdot t^{\binom{n-k-j}{2}} \qbinom{\ell(\nu)+j-1}{n-k-1}_t \qbinom{n-k}{j}_t\\
	& = \Pi_\nu(0,t)[n-k]_t \cdot t^{\binom{n-k-j}{2}} \qbinom{\ell(\nu)+j-1}{j}_t \qbinom{\ell(\nu)}{n-k-j}_t.
\end{align}
We will need two more identities: \cite{DAdderio-VandenWyngaerd-2017}*{Lemma~5.2}, i.e.
\begin{equation} \label{eq:lem52}
e_{n-k-1}[B_\beta-1]B_\beta=\sum_{\gamma\subset_k \beta}c_{\beta \gamma}^{(k)}B_\gamma T_\gamma\quad \text{ for } \beta\vdash n>k\geq 1 ,
\end{equation}

and \cite{Haglund-Schroeder-2004}*{Theorem~2.6}, i.e. for any $A,F\in \Lambda$ homogeneous
\begin{equation} \label{eq:HaglundThm}
\sum_{\mu\vdash n}\Pi_\mu F[MB_\mu]d_{\mu\nu}^A=\Pi_\nu\left(\Delta_{A[MX]}F[X]\right)[MB_\nu],
\end{equation}
where $d_{\mu\nu}^A$ is the generalized Pieri coefficient defined by
\begin{equation}
\sum_{\mu\supset \nu}d_{\mu\nu}^A\widetilde{H}_\mu=A\widetilde{H}_\nu.
\end{equation}

Setting $A[X]=e_j[X/M]$ and $F[X]=e_{n-k-1}[X/M-1]e_1[X/M]$ in \eqref{eq:HaglundThm}, we get
\begin{align} \label{eq:summands}
	\notag	\sum_{\mu \supset_j \nu}e_{n-k-1}[B_\mu-1]B_\mu\Pi_\mu d_{\mu \nu}^{(j)} & = \Pi_\nu \left( \Delta_{e_j}e_{n-k-1}[X/M-1]e_1[X/M]\right)[MB_\nu]\\
	\notag	\text{(using \eqref{eq:deltaprime})}& = \Pi_\nu \left( \sum_{i=0}^{n-k-1}\Delta_{e_j}(-1)^{n-k-1-i}e_i[X/M]e_1[X/M]\right)[MB_\nu]\\
	\notag	\text{(using \eqref{eq:e_h_expansion})}& = \Pi_\nu \left( \sum_{i=0}^{n-k-1}\Delta_{e_j}(-1)^{n-k-1-i}\sum_{\beta\vdash i}e_1[X/M]\frac{\widetilde{H}_\beta[X]}{w_\beta}\right)[MB_\nu]\\
	\notag	\text{(using \eqref{eq:def_dmunu})} & = \Pi_\nu \left( \sum_{i=0}^{n-k-1}\Delta_{e_j}(-1)^{n-k-1-i}\sum_{\beta\vdash i}\sum_{\gamma\supset_1 \beta}\frac{d_{\gamma \beta}^{(1)}}{w_\beta}\widetilde{H}_\gamma[X] \right)[MB_\nu]\\
	\notag	\text{(using \eqref{eq:rel_cmunu_dmunu})} & = \Pi_\nu \left( \sum_{i=0}^{n-k-1}(-1)^{n-k-1-i}\sum_{\gamma\vdash i+1}\sum_{\beta\subset_1 \gamma}\frac{c_{\gamma \beta}^{(1)}}{w_\gamma}e_j[B_\gamma]\widetilde{H}_\gamma[X] \right)[MB_\nu]\\
	\notag \text{(using \eqref{eq:sumBmu})}	& = \Pi_\nu \sum_{i=0}^{n-k-1}(-1)^{n-k-1-i}\sum_{\gamma\vdash i+1}B_\gamma e_j[B_\gamma]\frac{\widetilde{H}_\gamma[MB_\nu]}{w_\gamma} \\
	\text{(using \eqref{eq:deltaprime})}& = \Pi_\nu \sum_{i=0}^{n-k-1}(-1)^{n-k-1-i}\sum_{\gamma\vdash i+1}B_\gamma e_j[B_\gamma-1]\frac{\widetilde{H}_\gamma[MB_\nu]}{w_\gamma}\\
	\notag	& \quad +  \Pi_\nu \sum_{i=0}^{n-k-1}(-1)^{n-k-1-i}\sum_{\gamma\vdash i+1}B_\gamma  e_{j-1}[B_\gamma-1] \frac{\widetilde{H}_\gamma[MB_\nu]}{w_\gamma}.
\end{align}

Now, taking the first of the two summands in \eqref{eq:summands}, we have:
\begin{align*}
	& \hspace{-2cm} \Pi_\nu \sum_{i=0}^{n-k-1}(-1)^{n-k-1-i}\sum_{\gamma\vdash i+1}B_\gamma e_j[B_\gamma-1]\frac{\widetilde{H}_\gamma[MB_\nu]}{w_\gamma}= \\
	\text{(using \eqref{eq:lem52})}& = \Pi_\nu \sum_{i=0}^{n-k-1}(-1)^{n-k-1-i}\sum_{\gamma\vdash i+1} \sum_{\alpha\subset_{i-j} \gamma}c_{\gamma \alpha}^{(i-j)}B_\alpha T_\alpha \frac{\widetilde{H}_\gamma[MB_\nu]}{w_\gamma} \\
	\text{(using \eqref{eq:rel_cmunu_dmunu})}& = \Pi_\nu \sum_{i=0}^{n-k-1}(-1)^{n-k-1-i}\sum_{\alpha\vdash j+1}\frac{B_\alpha T_\alpha}{w_\alpha}  \sum_{\gamma\supset_{i-j} \alpha}d_{\gamma \alpha}^{(i-j)}\widetilde{H}_\gamma[MB_\nu] \\
	\text{(using \eqref{eq:def_dmunu})} & = \Pi_\nu \sum_{i=0}^{n-k-1}(-1)^{n-k-1-i}e_{i-j}[B_\nu] \sum_{\alpha \vdash j+1}T_\alpha B_\alpha \frac{\widetilde{H}_\alpha[MB_\nu]}{w_\alpha} \\
	\text{(using \eqref{eq:deltaprime})}& = \Pi_\nu e_{n-k-1-j}[B_\nu-1] \sum_{\alpha \vdash j+1}T_\alpha B_\alpha \frac{\widetilde{H}_\alpha[MB_\nu]}{w_\alpha}. 
\end{align*}

When we specialize this at $q=0$, because of the obvious 
\begin{equation}
T_\alpha(0,t)=\delta_{\alpha,(1^{j+1})}t^{\binom{j+1}{2}},
\end{equation} 
the only term that survives in the sum is the one with $\alpha=(1^{j+1})$. Now using \eqref{eq:Hn}, the well-known
\begin{equation}
\widetilde{H}_{\mu}[X;q,t]=\widetilde{H}_{\mu'}[X;t,q],
\end{equation}
and the obvious $w_{(1^{j+1})}=\prod_{i=1}^{{j+1}}(1-t^i) \cdot \prod_{i=0}^{j}(t^i-q)$, we get
\begin{align*}
	&\hspace{-2cm} \Pi_\nu  e_{n-k-1-j}[B_\nu-1]  \sum_{\alpha \vdash j+1}T_\alpha B_\alpha \left.\frac{\widetilde{H}_\alpha[MB_\nu]}{w_\alpha}\right|_{q=0}= \\  
	& = \Pi_\nu(0,t) e_{n-k-1-j}[[\ell(\nu)]_t-1] t^{\binom{j+1}{2}} \frac{[j+1]_t}{w_{1^{j+1}}(0,t)} h_{j+1}[[\ell(\nu)]_t]\prod_{i=1}^{j+1}(1-t^i)\\
	& = \Pi_\nu(0,t)  e_{n-k-1-j}[[\ell(\nu)]_t-1] [j+1]_t h_{j+1}[[\ell(\nu)]_t] \\
	\text{(using \eqref{eq:espec} and \eqref{eq:h_q_binomial})}& = \Pi_\nu(0,t) t^{\binom{n-k-j}{2}}\qbinom{\ell(\nu)-1}{n-k-1-j}_t [j+1]_t \qbinom{\ell(\nu)+j}{j+1}_t .
\end{align*}
Of course for the second summand in \eqref{eq:summands} we get the same result with $j$ replaced by $j-1$, so that

\begin{align*}
	& \hspace{-1cm}\sum_{\mu \supset_j \nu}\left.e_{n-k-1}[B_\mu-1]B_\mu\Pi_\mu d_{\mu \nu}^{(j)}\right|_{q=0}=\\
	& =\Pi_\nu(0,t)   t^{\binom{n-k-j}{2}}\qbinom{\ell(\nu)-1}{n-k-1-j}_t [j+1]_t \qbinom{\ell(\nu)+j}{j+1}_t\\
	& \quad + \Pi_\nu(0,t)   t^{\binom{n-k-j+1}{2}}\qbinom{\ell(\nu)-1}{n-k-j}_t [j]_t \qbinom{\ell(\nu)+j-1}{j}_t\\
	& =\Pi_\nu(0,t)   t^{\binom{n-k-j}{2}}\qbinom{\ell(\nu)+j-1}{j}_t\left(\qbinom{\ell(\nu)-1}{n-k-1-j}_t [\ell(\nu)+j]_t+t^{n-j-k} \qbinom{\ell(\nu)-1}{n-k-j}_t [j]_t\right)
\end{align*}
but
\begin{align*}
	& \hspace{-2cm}	\qbinom{\ell(\nu)-1}{n-k-1-j}_t [\ell(\nu)+j]_t+t^{n-j-k} \qbinom{\ell(\nu)-1}{n-k-j}_t [j]_t= \\
	& =\qbinom{\ell(\nu)-1}{n-k-1-j}_t [\ell(\nu)+j]_t+ [j]_t\left(\qbinom{\ell(\nu)}{n-k-j}_t- \qbinom{\ell(\nu)-1}{n-k-j-1}_t\right)\\
	& =\qbinom{\ell(\nu)-1}{n-k-1-j}_t [\ell(\nu)]_tt^j+ [j]_t \qbinom{\ell(\nu)}{n-k-j}_t\\
	& =\qbinom{\ell(\nu)}{n-k-j}_t [n-k-j]_tt^j+ [j]_t \qbinom{\ell(\nu)}{n-k-j}_t\\
	& =[n-k]_t \qbinom{\ell(\nu)}{n-k-j}_t,
\end{align*}
where in the first equality we used the well-known $q^b\qbinom{a-1}{b}+\qbinom{a-1}{b-1}=\qbinom{a}{b}$. Hence 
\begin{align*}
	\sum_{\mu \supset_j \nu}\left.e_{n-k-1}[B_\mu-1]B_\mu\Pi_\mu d_{\mu \nu}^{(j)}\right|_{q=0} & =\Pi_\nu(0,t)   t^{\binom{n-k-j}{2}}\qbinom{\ell(\nu)+j-1}{j}_t\qbinom{\ell(\nu)}{n-k-j}_t[n-k]_t\\
	& =\Pi_\nu(0,t)   t^{\binom{n-k-j}{2}}\qbinom{\ell(\nu)+j-1}{n-k-1}_t\qbinom{n-k}{j}_t[\ell(\nu)]_t,
\end{align*}
where in the last equality we used an easy manipulation of $t$-binomials (cf \cite{DAdderio-VandenWyngaerd-2017}*{Lemma~4.3}). 

This completes the proof of Lemma~\ref{lem:tech}.


\bibliographystyle{amsalpha}
\bibliography{Biblebib}

\end{document}